\def\NAT@def@citea{\def\@citea{\NAT@separator}}% Suppress spaces between citations using natbib.sty
\theoremstyle{plain}% Theorem-like structures provided by amsthm.sty
\newtheorem{theorem}{Theorem}[section]
\newtheorem{lemma}[theorem]{Lemma}
\newtheorem{corollary}[theorem]{Corollary}
\newtheorem{proposition}[theorem]{Proposition}
\theoremstyle{definition}
\newtheorem{definition}[theorem]{Definition}
\newtheorem{example}[theorem]{Example}
\theoremstyle{remark}
\newtheorem{remark}{Remark}
\renewcommand{\labelenumi}{(\roman{enumi})}
\begin{document}
\articletype{ARTICLE TEMPLATE}% Specify the article type or omit as appropriate

\title{Duality for Composite Optimization Problem within the Framework of Abstract Convexity}

\author{
\name{Ewa Bednarczuk\textsuperscript{a}\thanks{CONTACT The Hung Tran. Email:\ tthung@ibspan.waw.pl \\
\textbf{Disclaimer}: This work represents only the author’s view and the European Commission is not responsible for any use that may be made of the information it contains.} and Hung T.T.\textsuperscript{a}}
\affil{\textsuperscript{a}Department of Modeling and Optimization in Dynamical System, System Research Institute-Polish Academy of Science, Warsaw, Poland}
}

\maketitle

\begin{abstract}
We study conjugate and Lagrange dualities for  composite optimization problems within the framework of abstract convexity.
We provide conditions for zero duality gap in conjugate duality. For Lagrange duality, intersection property is applied to obtain zero duality gap. Connection between Lagrange dual and conjugate dual is also established. Examples related to convex and weakly convex functions are given.
\end{abstract}

\begin{keywords}
Abstract convexity; $\Phi$-convexity; $\varepsilon$-subdifferentials; Conjugate dual; Lagrange dual; Zero duality gap; Nonconvex programming; Global Optimization
\end{keywords}
\begin{amscode}
49J27; 49J35; 49N15; 90C46; 90C26
\end{amscode}

\section{Introduction}
During the last eighty years, convexity has become an essential part in the development of optimization, nonlinear analysis etc. With the surge of computational power, interests in machine learning and data science have risen, which make convexity the backbone of many theories and applications. Together with convexity, there are also many works that try to go beyond convexity which include functions
%. These result in convex variants 
like: strongly convex \cite{vial1983strong}, quasi-convex, pseudo-convex, strongly para-convex and para-convex \cite{rolewicz1979paraconvex}, delta-convex \cite{ulam1952approximate}, approximately convex \cite{pales2003approximately} etc...
%Among them, abstract convexity is considered to be more general than others, ....

Abstract convexity encompasses many of the above mentioned classes of functions. Abstract convexity, presented in the monographs of Rubinov \cite{Rub2013}, Pallaschke and Rolewicz \cite{Pall2013} is based on the idea to bring convexity outside the range of linearity, by introducing the nonlinear environment where a function becomes an upper envelope of a subset of functions with specific rules. The term “abstract convexity” was used by Rubinov \cite{Rub2013} to describe functions which are upper envelopes of a given class of function $\Phi$, i.e.
\[
f(x) = \sup_{\phi\in \Phi} \left\{\phi(x)\ :X\to\mathbb{R}, \ \phi \leq f\right\},
\]
where $X$ is a nonempty set.

Consequently, the concept of abstract convexity works well with classical convex functions. As the supremum operation is retained, many global properties of convex analysis are still in effect for abstract convexity. Till now, the books of Pallaschke and Rolewicz \cite{Pall2013}, and the monograph of Singer \cite{Sin1997} gathered many results about abstract convexity, %and 
%its framework 
especially about conjugation, subdifferential and duality. They also have historical results of the main ideas of abstract convexity and its applications. 
%While 
In \cite{Rub2013}, Rubinov presented basic notions of abstract convexity and applications to global optimization problems.

It has been fifty years since the theory of abstract convexity started. It has made great progress in many disciplines with applications in both theory and computation. In \cite{Rubinov1999}, the authors investigated the class of increasing star-shaped functions and constructed an algorithm to solve the global optimization problem using abstract convexity. They also generalized the cutting plane algorithm for nonconvex global minimization problems \cite{Rubinov1999,andra2002}. While in \cite{burachik2008,dutta2004,eber2010}, a general version of monotone operator is developed based on abstract convexity.

In this paper, we pay special attention to duality theory for composite minimization problem by using abstract convexity. 
We mainly study the conjugate and Lagrangian dualities within the  abstract convexity for the composite minimization problem,  
\begin{equation}
\label{prob:CP} 
\inf_{x\in X} f(x) +g(Lx), \tag{CP}
\end{equation}
where $X$ and $Y$ are the domain sets (spaces) of the functions $f$ and $g$, the mapping $L:X\to Y$ maps $X$ into $Y$.

The composite optimization problem is taken as our main interest as it is a general problem of many optimization formulation. One can think of \eqref{prob:CP} as nonlinear programming problem, min-max optimization, constrained and unconstrained problem. These problems have been studied extensively in \cite{rockafellar2009variational,Bon2013}. On the other hand, composite problem can also be considered as a regularized problem by treating the $g(Lx)$ as a penalty term. Such problems are knows as total variation model in image deblurring and denoising \cite{beck2009fast}.

Several efforts have been made for the last twenty years to investigate dualities for optimization problem within the framework of abstract convexity.
In \cite{Jey2007} 
strong duality is proved for infimal convolution of Fenchel’s duality. In \cite{Burachik2007}, zero duality gap and exact multiplier of augmented Lagrangian are investigated by using the framework of abstract convexity. While in \cite{Syga2016,Syga2018} necessary and sufficient  conditions are given to achieve minimax equality from a general Lagrangian using the definition of abstract convexity. In the works of Dolgopolik \cite{Dolgo2015}, Gorokhovik and Tykoun \cite{goro2020,goro20201}, the authors applied abstract convexity to approximate the class of nonsmooth functions and formulate necessary optimal conditions for global nonsmooth nonconvex problem. The authors in \cite{Bui2021} investigated the problem of minimizing the finite sum of arbitrary functions and provided conditions for zero duality gap through infimal convolution dual. 
%All in all, their results only focus on the set of abstract linear or abstract affine elementary functions with the possibility to generalize to a real-valued function. 
On the other hand, in \cite{Bed2020}, the problem of zero duality gap is studied with the help of perturbation function.
%studied the problem from a different view. 
%instead of considering only one set of elementary functions, it is assumed that each function can have a different set of elementary functions and studied the 
%relationship between these sets. 

%Following the work of \cite{Bed2020}, 
%. Instead of considering the same variable space, we examine the case where there are different variable spaces in 

\textbf{Our contribution is as follow}.
Inspired by Moreau's general subdifferential and conjugation \cite{moreau1970}, we utilize the framework of abstract conjugation and abstract subdifferential \cite{Jey2007} and construct conjugate dual problem to \eqref{prob:CP}. We derive conditions  for zero duality gap and strong duality. As we have different variable spaces of $f$ and $g$ , the conditions we propose reduce to the ones proposed 
in e.g. \cite{Bui2021} when $L$ is the identity mapping. 
In deriving the Lagrangian dual problem 
we make use of the intersection property for abstract convex functions, \cite{Bed2020,Syga2018} to obtain Lagrange zero duality gap. We also discuss the relationship between conjugate and Lagrangian duals.

The structure of paper is as follows. In Section \ref{sec:preliminaries}, we state the framework of abstract convexity with conjugation and subdifferential as well as some basic properties for abstract convex functions. Section \ref{sec:Conjugate dual} contains the definition of the composite minimization problem and way to construct the conjugate dual problem. 
Section \ref{sec:zero dual gap} contains our main result about zero duality gap given in  Theorem \ref{thm:CP zero gap v2}. In Section \ref{sec:strong duality}, we provide conditions for strong duality in Theorem \ref{thm:epi-zero gap} and Corollary \ref{cor:epi zero gap}. 
In Section \ref{sec:Lagrange dual}, we prove Lagrange zero duality gap in Theorem \ref{thm:Lagrange intersection}. The equivalence of conjugate dual and Lagrange dual is examined under some conditions in Corollary \ref{cor:Lagrange zero gap best}.

\section{Preliminaries}
\label{sec:preliminaries}
%Let $X$ be Hilbert space, with $\left\langle \cdot,\cdot\right\rangle $
%as the inner product and $\left\langle x,x\right\rangle =\left\Vert x\right\Vert ^{2}$.
Let $X$ be a nonempty set.
A function $f:X\to \left( -\infty,+\infty \right]$ is proper if its domain, denoted by $\text{dom }f=\left\{ x\in X:f\left(x\right)<+\infty\right\}$ is nonempty.

%We define
Let $\Phi=\left\{ \phi:X\to\mathbb{R}\right\} $ be a collection of real-valued functions, which is closed under addition of a constant. The support set of $f$ with respect to $\Phi$ is defined as
\[
\text{supp}_{\Phi}f=\left\{ \phi\in\Phi:f\left(x\right)\geq\phi\left(x\right) \left(\forall x\in X\right)\right\} .
\]

%We also denote 
For $f,g:X\to\left( -\infty,+\infty \right]$, we write $f\leq g\Leftrightarrow f\left(x\right)\leq g\left(x\right)$
for all $x\in X$. Elements of $\Phi$ are called  elementary functions.

\begin{definition} \cite{Rub2013,Pall2013}
\label{def:phi-convex}
A function $f:X\to\left( -\infty,+\infty \right]$ is $\Phi$-convex if
\begin{equation}
f\left(x\right)=\sup_{\phi\in\text{supp}_{\Phi}f}\phi\left(x\right),\ \forall x\in X.
\end{equation}
A function $f$ is $\Phi$-convex at $x_0\in X$ if $f(x_0) = \sup_{\phi\in\text{supp}_{\Phi}f}\phi\left(x_0\right)$.
\end{definition}
Note that when a function $f:X\to (-\infty,+\infty]$ is $\Phi$-convex, then $\text{supp } f$ is nonempty, since otherwise, $f\equiv -\infty$.
%$\Phi$-convexity theory allows to define
 %the notions of $\Phi$-conjugate and %$\Phi$-subdifferential as follow.

\begin{definition} \cite{Rub2013,Pall2013}
\label{def:phi-conjugate}
A $\Phi$-conjugate $f^*:\Phi\to (-\infty,+\infty]$ of $f$ is defined as 
\begin{equation}
f_{\Phi}^{*}\left(\phi\right):=\sup_{x\in X}\left\{ \phi\left(x\right)-f\left(x\right)\right\} .
\end{equation}
Analogously, we can define $\Phi$-biconjugate $f^{**}: X\to (-\infty,+\infty]$ of function $f$ as 
\begin{equation}
f^{**}_\Phi (x):= \sup_{\phi\in\Phi}\left\{ \phi (x) - f^*_{\Phi} (\phi)\right\}.
\end{equation}
%In the case when $\varphi:X\to \mathbb{R} \notin \Phi$, we also define 
%\begin{equation}
%\label{eq:notPhi conjugate}
%^* f (\varphi) :=\sup_{x\in X} \varphi(x) - %f(x)
%\end{equation}
\end{definition}

As in  convex analysis, the following result holds.
\begin{theorem}\cite{Rub2013,Pall2013}
\label{thm:Moreau}
A function $f:X\to\left( -\infty,+\infty \right]$ is $\Phi$-convex if and only if
\begin{equation}
f(x) = f^{**}_\Phi (x),\ \forall x\in X
\end{equation}
\end{theorem}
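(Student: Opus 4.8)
The plan is to prove a single identity that renders both implications transparent: for an arbitrary $f:X\to(-\infty,+\infty]$,
\[
f^{**}_\Phi(x)=\sup_{\phi\in\text{supp}_\Phi f}\phi(x),\qquad \forall x\in X.
\]
Once this is in hand the theorem follows at once, since by Definition \ref{def:phi-convex} the statement that $f$ is $\Phi$-convex says precisely that the right-hand side equals $f(x)$ for every $x$; hence $f$ is $\Phi$-convex if and only if $f=f^{**}_\Phi$. The whole proof therefore reduces to establishing the displayed identity, which I would do by proving the two inequalities separately.

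For the lower bound $f^{**}_\Phi(x)\ge\sup_{\phi\in\text{supp}_\Phi f}\phi(x)$, I would take any $\eta\in\text{supp}_\Phi f$. By definition $\eta\le f$, so $f^*_\Phi(\eta)=\sup_{x\in X}\{\eta(x)-f(x)\}\le 0$. Inserting $\eta$ into the supremum defining the biconjugate gives $f^{**}_\Phi(x)\ge \eta(x)-f^*_\Phi(\eta)\ge \eta(x)$, and taking the supremum over $\eta\in\text{supp}_\Phi f$ yields the claim. This direction uses only monotonicity of the conjugate and needs no special structure on $\Phi$.

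The reverse inequality $f^{**}_\Phi(x)\le\sup_{\phi\in\text{supp}_\Phi f}\phi(x)$ is where the essential idea enters, and I expect it to be the main obstacle. Fix $\phi\in\Phi$; only the terms with $f^*_\Phi(\phi)<+\infty$ matter, since $f^*_\Phi(\phi)=+\infty$ contributes $-\infty$ to the supremum. For finite $c:=f^*_\Phi(\phi)$ I would invoke the standing assumption that $\Phi$ is closed under addition of a constant to form the shifted elementary function $\psi:=\phi-c\in\Phi$. From $c\ge\phi(y)-f(y)$ for every $y$ one obtains $\psi(y)=\phi(y)-c\le f(y)$, i.e. $\psi\in\text{supp}_\Phi f$, while $\psi(x)=\phi(x)-f^*_\Phi(\phi)$. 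Hence every term in the supremum defining $f^{**}_\Phi(x)$ is dominated by $\sup_{\eta\in\text{supp}_\Phi f}\eta(x)$, which gives the upper bound and closes the identity. The delicate points to treat carefully are the boundary cases $f^*_\Phi(\phi)=\pm\infty$ (the value $-\infty$ forcing $f\equiv+\infty$, where the lower bound already suffices), and the observation that the construction of $\psi$ genuinely depends on the closure of $\Phi$ under constants: without that property the biconjugate need not recover the support envelope, so this is the hypothesis doing the real work.
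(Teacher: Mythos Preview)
The paper does not prove this theorem; it is stated with citations to Rubinov and Pallaschke--Rolewicz and no argument is given. Your proof is correct and is essentially the standard one found in those references: establishing the identity $f^{**}_\Phi(x)=\sup_{\phi\in\text{supp}_\Phi f}\phi(x)$ via the shift $\psi=\phi-f^*_\Phi(\phi)$, which indeed relies precisely on the closure of $\Phi$ under addition of constants, and then reading off both implications from Definition~\ref{def:phi-convex}.
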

In view of this theorem, we say that $f$ is $\Phi$-convex at a point $x_0\in X$ if $f^{**}_\Phi (x_0) = f(x_0)$.
\begin{example}
\label{example1}
Let $X$ be a Banach space with the topological dual $X^{*}$ and 
\begin{equation}
\label{eq:Phi conv set}
\Phi_{conv}:=\left\{ \phi:X\to\mathbb{R}\, \mid \, \phi\left(x\right)=\left\langle u,x\right\rangle+c ,u\in X^*,c\in \mathbb{R}\right\}. 
\end{equation}
If a function $f:X\to ( -\infty,+\infty]$ is $\Phi_{conv}$-convex, then it is proper, convex and lower semi-continuous. When $X$ is a locally convex space, \cite{ekeland1999}, then $f$ is $\Phi_{conv}$-convex if and only if $f$ is proper, convex and lower semi-continuous.
\end{example}

\begin{example}
\label{example3}
Let $X$ be a Hilbert space with the inner product $\left\langle \cdot,\cdot\right\rangle$ and the norm $\left\langle x,x\right\rangle =\left\Vert x\right\Vert ^{2}$. Let $a\in\mathbb{R}$, take $\Phi_{Q,a}$ as a collection of quadratic functions 
\begin{equation}
\label{eq:Phi a set}
\Phi_{Q,a}:=\left\{ \phi:X\to\mathbb{R} \ |\  \phi\left(x\right)=a\left\Vert x\right\Vert ^{2}+\left\langle u,x\right\rangle +c \text{ where } c\in\mathbb{R},u\in X\right\}.
\end{equation}
A function $f:X\to (-\infty,+\infty]$ is $\Phi_{Q,a}$-convex, if for every $x\in X$, we have 
\[
f(x) = \sup_{\phi \in \Phi_{Q,a}} \{ \phi (x) \ | \ \phi  \leq f \}.
\]
Since $\Phi_{Q,a}$ consists of continuous functions, $f$ is lower semi-continuous on $X$.
%with $a\leq 0$ coincides with the class of lsc functions defined on $X$, see \cite[Proposition 6.3]{Rub2013}. 
Depending upon the sign of $a$, we get the following.
\begin{itemize}
\item If $a=0$ then we go back to the affine elementary functions as in Example \ref{example1}, so $f$ is lsc proper convex if $f$ is $\Phi_{Q,a}$-convex.
\item If $a>0$, $f$ is $\Phi_{Q,a}$-convex, then $f$ is strongly convex with modulus $a$. One can look at the definition of a strongly convex function in \cite[Definition 4.1]{vial1983strong}.
\item If $a<0$ and  $f$ is $\Phi_{Q,a}$-convex, then 
%for fixed $a$, we call 
$f$ is a weakly convex function with modulus $a$  \cite[Definition 4.1]{vial1983strong}, see also \cite{rolewicz1979paraconvex}.
\end{itemize}

For equivalent definitions of weakly convex functions and related facts, see e.g. \cite{cannarsa2004,attouch1993approximation} and the references therein. 
\end{example}

\begin{definition}
\label{def:weak convex}
Let $X$ be a normed space with the norm $\lVert \cdot\rVert$. A function $f:X\to\left( -\infty,+\infty \right]$ is weakly convex with modulus $\rho\geq0$ or $\rho$-weakly convex if $f+\rho\left\Vert \cdot\right\Vert ^{2}$ is a convex function. When $\rho =0$, $f$ becomes a convex function.
\end{definition}
Note that many authors consider the class 
\begin{equation}
\label{eq:Phi quadratic set}
\Phi_{Q}:=\left\{ \phi:X\to\mathbb{R} \ |\  \phi\left(x\right)=a\left\Vert x\right\Vert ^{2}+\left\langle u,x\right\rangle +c \text{ where } a\leq 0 ,c\in\mathbb{R},u\in X\right\},
\end{equation}
where $X$ is a Hilbert space, see e.g. \cite[Example 6.2]{Rub2013}. For this class of elementary functions, the set $\Phi_Q$-convex functions coincides with the set of all lower semi-continuous functions defined on $X$ and minorized by a function from $\Phi_Q$, see \cite[Proposition 6.3]{Rub2013}. Quadratically minorized functions have also been investigated in \cite{attouch1993approximation}.
The starting point of numerous variant concepts of convex functions stems from the work of Hyers and Ulam \cite{ulam1952approximate}, where they defined $\delta$-convex function for $\delta\geq 0$, next approximately convex functions were investigated in e.g. \cite{daniilidis2004approximate,pales2003approximately,rolewicz2001uniformly} and the references therein. Rolewicz has coined the term "paraconvexity", \cite{rolewicz1979paraconvex}, by turning $\delta$ into a non-negative function. The definitions of strong and weak convexity were also given in \cite{vial1983strong} by Vial.

\begin{definition} \cite{Rub2013,Pall2013}
\label{def:phi-sub}
Let $X$ be a nonempty set and  $f:X\to\left( -\infty,+\infty \right]$. A $\Phi$-subgradient of $f$ at a point $x\in \text{dom } f$ is any element $\phi\in \Phi$ such that
\begin{equation}
f \left(y\right)-f \left(x\right) \geq \phi\left(y\right)-\phi \left(x\right), \quad \forall y\in X.
\end{equation}
The collection of all $\Phi$-subgradients of $f$ at $x$  is called the $\Phi$-subdifferentials of $f$ at $x$ and is denoted by  $\partial_\Phi f(x)$,
\begin{equation}
\partial_\Phi f \left(x\right):=\left\{ \phi\in \Phi\, |\,  \left(\forall y\in X\right)\ f\left(y \right) -f\left(x\right) \geq\phi\left(y\right)-\phi \left(x\right) \right\} .
\end{equation}

Furthermore, for $\varepsilon \geq 0$, the $\varepsilon-\Phi$-subdifferentials of $f$ at $x\in \text{dom } f$, $\partial_{\varepsilon,\Phi} f(x)$,  is defined as
\begin{equation}
\label{def:ephi-sub}
\partial_{\varepsilon,\Phi} f(x):=\{\phi\in\Phi\,\mid\, f \left(y\right)-f \left(x\right) \geq \phi\left(y\right)-\phi \left(x\right) -\varepsilon,\quad \forall y\in X\},
\end{equation}
an elements of  $\partial_{\varepsilon,\Phi} f(x)$ are called $(\varepsilon,\Phi)$-subgradients of $f$ at $x\in\text{dom\,}f$.
\end{definition}
Additional facts and results of $\varepsilon$-$\Phi$ subdifferentials can be found in \cite{Jey2007, burachik2008}.

The following  properties follow directly from the definitions.
\begin{proposition}
\label{prop1}
Let $X$ be a nonempty set and  $f:X\to\left( -\infty,+\infty \right]$.
\begin{enumerate}
\item For all $x\in \text{dom } f$ and $\varepsilon\geq 0$, an element $\phi\in \Phi$ is a $(\varepsilon,\Phi)$-subgradient of $f$ at $x$ if and only if 
\begin{equation}
\label{prop1:Fenchel}
f(x) + f^*_\Phi (\phi) \leq \phi (x) + \varepsilon.
\end{equation}
\item $\text{dom }f^{*} = \bigcap_{\varepsilon>0}\partial_{\varepsilon,\Phi}f\left(X\right)$.
\end{enumerate}
\end{proposition}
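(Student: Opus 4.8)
The plan is to establish (i) by a direct rearrangement of the defining inequality, and then to deduce (ii) from (i) together with elementary properties of the supremum. Throughout I will treat $f$ as proper, so that $\operatorname{dom} f\neq\emptyset$ and $f^{*}_{\Phi}(\phi)>-\infty$ for every $\phi$.

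For part (i), fix $x\in\operatorname{dom} f$ (so $f(x)$ is finite) and $\varepsilon\geq 0$. I would start from the definition \eqref{def:ephi-sub}: $\phi\in\partial_{\varepsilon,\Phi}f(x)$ means $f(y)-f(x)\geq \phi(y)-\phi(x)-\varepsilon$ for every $y\in X$. Since $f(x)$ and $\phi(x)$ are finite, this is equivalent to $\phi(y)-f(y)\leq \phi(x)-f(x)+\varepsilon$ for all $y\in X$. Taking the supremum over $y$ and recalling $f^{*}_{\Phi}(\phi)=\sup_{y}\{\phi(y)-f(y)\}$ collapses this family of inequalities into the single inequality $f^{*}_{\Phi}(\phi)\leq \phi(x)-f(x)+\varepsilon$, which is exactly \eqref{prop1:Fenchel}. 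The converse is the same computation read backwards: from \eqref{prop1:Fenchel} one has $\phi(y)-f(y)\leq f^{*}_{\Phi}(\phi)\leq \phi(x)-f(x)+\varepsilon$ for every $y$, and rearranging recovers the $(\varepsilon,\Phi)$-subgradient inequality. The only thing to verify is that passing between the pointwise inequalities and their supremum is legitimate, which holds because $f(x)$ is finite; the convention $\phi(y)-f(y)=-\infty$ for $y\notin\operatorname{dom} f$ causes no difficulty.

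For part (ii), recall $\operatorname{dom} f^{*}=\{\phi\in\Phi : f^{*}_{\Phi}(\phi)<+\infty\}$ and that $\partial_{\varepsilon,\Phi}f(X)=\bigcup_{x\in\operatorname{dom} f}\partial_{\varepsilon,\Phi}f(x)$. I would prove the two inclusions separately, using (i) to translate membership in a subdifferential into inequality \eqref{prop1:Fenchel}. For $\subseteq$: if $f^{*}_{\Phi}(\phi)$ is finite, then for each $\varepsilon>0$ the definition of the supremum furnishes a point $x_{\varepsilon}\in\operatorname{dom} f$ with $\phi(x_{\varepsilon})-f(x_{\varepsilon})>f^{*}_{\Phi}(\phi)-\varepsilon$; by (i) this says $\phi\in\partial_{\varepsilon,\Phi}f(x_{\varepsilon})\subseteq\partial_{\varepsilon,\Phi}f(X)$, and as $\varepsilon>0$ is arbitrary, $\phi$ lies in the intersection. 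For $\supseteq$: if $\phi$ belongs to the intersection, then in particular $\phi\in\partial_{1,\Phi}f(x_{1})$ for some $x_{1}\in\operatorname{dom} f$, whence (i) gives $f^{*}_{\Phi}(\phi)\leq \phi(x_{1})-f(x_{1})+1<+\infty$, so $\phi\in\operatorname{dom} f^{*}$.

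I do not anticipate a genuine obstacle, since the content is essentially a reformulation of the Young--Fenchel relation. The one place requiring care is the forward inclusion in (ii), where I must guarantee that the near-maximizers $x_{\varepsilon}$ can be chosen in $\operatorname{dom} f$; this is precisely where properness of $f$ is implicitly used, as it ensures $f^{*}_{\Phi}(\phi)>-\infty$ and hence that the supremum is not vacuously $-\infty$ over points with $f(y)=+\infty$. It is also worth noting that the reverse inclusion already holds using a single value of $\varepsilon$, so the intersection over all $\varepsilon>0$ is needed only for the forward inclusion.
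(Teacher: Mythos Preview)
Your proof is correct and is the standard argument; the paper itself does not prove this proposition but simply refers the reader to \cite[Proposition~7.10]{Rub2013} for (i) and \cite[Proposition~2.4]{Bui2021} for (ii). Your explicit note on properness of $f$ is apt, since without it the forward inclusion in (ii) can fail in the degenerate case $f\equiv+\infty$.
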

The proof for (i) can be found in \cite[Proposition 7.10]{Rub2013} and (ii) in \cite[Proposition 2.4]{Bui2021}.
% From now on, we always define $\Phi$ as the class of elementary functions associated with $f$ or any functions mapping from $X$ and $\Psi$ with $g$ or any functions mapping from $Y$. Whenever it is clear from the context, we  denote $\Phi$-conjugate of $f$ by $f^*$ instead of $f^*_\Phi$, and $\Phi$-subdifferentials of $f$ is denoted by $\partial f(x)$ instead of $\partial_\Phi f(x)$. We also have a similar expressions for $g^*$ and $\partial g$ instead of $g^*_\Psi$ and $\partial_\Psi g$. We specify the class of $\Phi$ when needed.

%\section{Conjugate Dual}
\section{
%Problem Formulation and Duality 
Construction of the conjugate dual}
\label{sec:Conjugate dual}

We consider the composite minimization problem
\begin{equation*}
\inf_{x\in X}f\left(x\right)+g\left(Lx\right) \tag{CP}
\end{equation*}
where $X$ is a nonempty set and $Y$ is a vector space, $f:X\to \left(-\infty,+\infty\right]$, $g:Y\to \left(-\infty,+\infty\right]$ and $L:X\to Y$ is a mapping from $X$ to $Y$. Note that here we do not assume that $L$ is linear and continuous.

Let $\Phi$ be a set of elementary functions $\phi:X\to \mathbb{R}$ and let  $\Psi$ be a set of elementary functions $\psi:Y\to \mathbb{R}$. Assume that  both classes are closed under addition of constants and $0\in \Phi,0\in\Psi$.

We introduce the conjugate dual of \eqref{prob:CP} 
by considering  perturbed minimization problems of the  form
\begin{equation}
\label{eq:beta-CP}
\inf_{x\in X} f\left(x\right)+g\left(Lx+y\right), \quad y\in Y.
\end{equation}
Following the classical  ideas coming from convex analysis, see e.g.  \cite{Bot2009,Bon2013}, we calculate the conjugate of the function $\beta: X\times Y \to (-\infty,+\infty]$, where 
$$
\beta(x,y):=f\left(x\right)+g\left(Lx+y\right),\ \ x\in X,\  y\in Y.
$$
Note that, in the convex case, conjugation is taken with respect to linear functionals, while here we are considering elementary functions $\phi,\psi$ from  given classes $\Phi$ and $\Psi$, respectively, see e.g. \cite{moreau1970}. 

Now, We define the conjugate of $\beta$ with respect to the  coupling function 
\begin{align}
c : \Phi \times\Psi \times X\times Y &\to \mathbb{R} \notag\\
\label{eq:coupling func}
\left(\phi,\psi,x,y\right) &\mapsto \phi(x) + \psi (Lx+y) - \psi(Lx).
\end{align}
For other types of coupling functions defined on Cartesian products of elementary function sets, see e.g.,  \cite{oettli1998}.  

The c-conjugate of $\beta$,   $\beta^c:\Phi\times\Psi\rightarrow(-\infty,+\infty]$, is defined as
\begin{equation}
\label{eq:beta-conj}
\beta^c (\phi,\psi) = \sup_{x\in X} \sup_{y\in Y}\left\{ c(\phi,\psi,x,y) - \beta(x,y)\right\}.
\end{equation}
The conjugate dual to problem \eqref{prob:CP} is defined as
\begin{equation}
\label{prob:DCP-form}
\sup_{\psi\in\Psi} - \beta^c (0,\psi). 
\end{equation}
Investigation of the relationship between \eqref{prob:CP} and problem \eqref{prob:DCP-form} is the main focus of the paper. To  write \eqref{prob:DCP-form} in a more explicit form, observe that the objective function of \eqref{prob:DCP-form} (the dual objective) is equal to
\begin{align}
-\beta^c (0,\psi) &= -\sup_{x\in X} \sup_{y\in Y}\left\{ c(0,\psi,x,y) - \beta(x,y)\right\} \notag\\
& = -\sup_{x\in X} \sup_{y\in Y}\left\{ \psi(Lx + y) - \psi (Lx) - f(x) - g(Lx +y)\right\} \notag\\
& = -\sup_{x\in X} \sup_{z=Lx+y }\left\{ \psi(z) - \psi (Lx) - f(x) - g(z)\right\} \notag\\
& = -\sup_{x\in X} \left\{ -\psi(Lx) - f(x) + \sup_{z\in Y} \{\psi(z) - g(z)\}\right\} \notag\\
\label{eq:beta coupling}
& = - \sup_{x\in X} \left\{-\psi (Lx) - f(x)\right\} -g^*_\Psi (\psi) \notag\\
%& = \inf_{x\in X}\left\{ \psi(Lx) + f(x) \right\}- g^*  (\psi). \notag \\
\end{align}

%where ${}^{*}f(-\psi\circ L)$ is defined in \eqref{eq:notPhi conjugate}.
Hence, the conjugate dual problem takes the form
\begin{equation}
\label{prob:DCP}
\sup_{\psi\in\Psi} - \beta^c (0,\psi) = \sup_{\psi\in\Psi} - \sup_{x\in X} \left\{-\psi (Lx) - f(x)\right\}-g^{*}_{\Psi}(\psi). \tag{DCP}
\end{equation}
Notice that we have kept the formulation of $\sup_{x\in X} \left\{-\psi (Lx) - f(x)\right\}$ in the dual problem \eqref{prob:DCP}, since we do not know if $\psi\circ L$ belongs to the class $\Phi$ in general. Below, we give some examples where we further investigate the dual problem.

\begin{example}
\label{rmk:Phi and Psi}
In some  cases, we 
%have a better representation of 
can rewrite the conjugate dual \eqref{prob:DCP} in an equivalent and more convenient way.
\begin{itemize}
\item If, for any $\psi\in\Psi$, 
%either 
$-\psi\circ L\in \Phi$ (alternatively, we can assume that $\psi\circ L\in \Phi$ and $\Phi$ is symmetric i.e. $-\phi \in \Phi$ for all $\phi\in \Phi$) 
%or $\psi\circ L\in \Phi$ and $\Phi$ is symmetric, 
we obtain
\begin{equation}
\sup\limits_{x\in X}\{-f(x)-\psi(Lx)\}
=f^*_\Phi \left(-\psi\circ L \right),
\end{equation}
and the conjugate dual problem  \eqref{prob:DCP} takes the form
\begin{equation}
\label{eq:dual-v2}
\sup_{\psi\in\Psi} - \beta^c (0,\psi) =\sup_{\psi\in\Psi} -f^*_\Phi \left(- \psi\circ L \right) -g^{*}_\Psi (\psi).
\end{equation}
%Instead of defining the set $\Phi$ in the beginning, 
%Taking into account 
With the help of the  operator $L:X\rightarrow Y $ we  define another class of elementary functions as follows
\[
\Phi_L :=\left\{-\psi\circ L: X\to \mathbb{R}, \  \psi\in\Psi 
%\text{ and } L:X\rightarrow Y \text{ is a bounded linear operator}
\right\},
\] 
and the dual  \eqref{eq:dual-v2} can be written as
\begin{equation}
\sup_{\substack{\phi\in\Phi_L,\psi\in\Psi \\ \phi +\psi\circ L =0}} -f^*_{\Phi_{L}} \left(\phi \right) -g^{*}_\Psi (\psi).
\end{equation}
\item If $\Phi$ is symmetric, and for any $\psi\in\Psi$, $\psi\circ L$ is $\Phi$-convex, we have \[
\psi (Lx) = \sup \left\{ \phi(x): \phi\in \text{supp } \psi\circ L\right\}.
\]
The optimal value of the dual problem \eqref{prob:DCP} (obtained with the help of  the coupling function $c$ from \eqref{eq:coupling func}) satisfies
\begin{align}
\sup_{\psi\in\Psi} - \beta^c (0,\psi) & =\sup_{\psi\in\Psi}\inf_{x\in X}f\left(x\right)+\psi\left(Lx\right) -g^{*}_\Psi \left(\psi\right)\notag\\
& = \sup_{\psi\in\Psi} \left\{\inf_{x\in X}\left( f(x) + \sup_{\phi\in \text{supp } \psi\circ L} \phi (x)\right) - g^*_\Psi (\psi)\right\} \notag\\
& \geq \sup_{\psi\in\Psi} \sup_{\phi\in \text{supp } \psi\circ L} \left\{\inf_{x\in X} \left(f(x) + \phi(x)\right) - g^*_\Psi (\psi) \right\} \notag\\
\label{eq:dual-v3}
& \geq \sup_{\psi\in\Psi, \phi\in \text{supp } \psi\circ L} -f^*_\Phi (-\phi) - g^*_\Psi (\psi).
\end{align}
Clearly,  the $\Phi$-convexity of $\psi\circ L$ is more general than the condition $\psi\circ L \in \Phi$. However, assuming the latter, we obtain, from \eqref{prob:DCP} i.e. 
\[
\sup_{\psi\in\Psi} - \beta^c (0,\psi) = \sup_{\psi\in\Psi} - f^*_{\Phi} (-\psi\circ L)-g^{*}_{\Psi}(\psi). 
\]
%only an inequality in \eqref{eq:dual-v3} instead of  as . 
%Moreover, the role of operator $L$ is not shown explicitly in \eqref{prob:DCP}, it is hidden in $\phi\in \text{supp } \psi\circ L$.
If $X=Y, \Phi = \Psi$ are symmetric, and $L:X\to X$ is such that $\psi\circ L\in\Phi$, for any $\psi\in\Psi$ then
\[
\sup\limits_{x\in X}\{-f(x)-\psi(Lx)\} = f^*_{\Psi} (-\psi\circ L),
\]
and the conjugate dual  \eqref{prob:DCP} takes the form 
\begin{equation}
\label{eq:dual11}
\sup_{\psi\in\Psi} - \beta^c (0,\psi) = \sup_{\psi\in\Psi} -f^*_{\Psi} (-\psi\circ L)-g^{*}_\Psi (\psi).
\end{equation}
If $L$ is the identity operator, then \eqref{prob:CP} becomes the minimization problem 
$$\inf_{x\in X} f(x) + g(x)
$$
for which the conjugate  dual \eqref{eq:dual11} has been discussed in \cite{Bed2020,Bui2021}.
\end{itemize}
\end{example}

\subsection{Conjugate dual for specific classes \texorpdfstring{$\Phi,\Psi$}{Phi, Psi}}

Now, we discuss the conjugate dual problem \eqref{prob:DCP} when $\Phi,\Psi$ are the sets of linear and quadratic functions i.e. $\Phi_{conv},\Psi_{conv}$ and $\Phi_{Q,a},\Psi_{Q,a}$ given by \eqref{eq:Phi conv set} and \eqref{eq:Phi a set} respectively. 
\begin{example}
\label{ex:3.2}
Let $X$ and $Y$ be Banach spaces with the topological duals $X^*,Y^*$ and their bilinear forms $\langle \cdot, \cdot \rangle_X, \langle \cdot, \cdot \rangle_Y$, respectively. Let $L:X\to Y$ be a linear continuous operator with the conjugate $L^* :Y^*\to X^*$. We define the sets $\Phi$ and $\Psi$ of elementary functions as follows.
\begin{enumerate}
\item For the case of affine elementary functions, (cf. \eqref{eq:Phi conv set} above), 
\begin{align*}
\Phi_{conv} & :=\left\{ \phi:X\to\mathbb{R}\ |\ \phi\left(x\right) =\left\langle u,x\right\rangle_X +c,u\in X^*, c\in \mathbb{R} \right\} \\
\Psi_{conv} & :=\left\{ \psi:Y\to\mathbb{R}\ |\ \psi\left(y\right) =\left\langle v,y\right\rangle_Y +d,v\in Y^*, d\in \mathbb{R} \right\}
\end{align*}
we have $\psi(Lx+y) - \psi (Lx) = \psi(y)$ which leads to the coupling function as defined in the convex case \cite{Bon2013}. We have
\begin{align*}
\sup\limits_{x\in X}\{-f(x)-\psi(Lx)\}
&=\sup\limits_{x\in X}\{-f(x)- \langle v,Lx\rangle_Y \}-d\\
&=\sup\limits_{x\in X}\{-f(x)- \langle L^* v,x\rangle_X \} -d. 
\end{align*} 
Since $L^* v\in X^*$, we have $-\psi \circ L\in \Phi_{conv}$ so
\[
\sup\limits_{x\in X}\{-f(x)- \langle L^* v,x\rangle_X -d \} = f^*_{\Phi_{conv}} (-\psi\circ L)
\]
and the dual problem
\begin{equation}
\label{eq:example dual 1}
\sup_{\psi\in\Psi_{conv}} - \beta^c (0,\psi) =\sup_{\psi\in\Psi_{conv}} -f^*_{\Phi_{conv}} \left(- \psi\circ L \right) -g^{*}_{\Psi_{conv}} (\psi).
\end{equation}
Note that the condition $\psi\circ L \in \Phi_{conv}$ is satisfied thanks to the form of $\Phi_{conv}$, $\Psi_{conv}$, and \eqref{eq:example dual 1} coincides with the classical Fenchel dual investigated e.g. in \cite[Chapter 1, Section 2]{Bot2009}.
\item In the case of quadratic elementary functions ($a,b\in \mathbb{R}$, cf. formula  \eqref{eq:Phi a set} above),
\begin{align}
\label{eq:class weak f}
\Phi_{Q,a} & =:\left\{ \phi:X\to\mathbb{R}\ |\ \phi\left(x\right)=a\left\Vert x\right\Vert_X ^{2}+\left\langle u,x\right\rangle_X + c,u\in X^*,c\in\mathbb{R}\right\}, \\
\label{eq:class weak g}
\Psi_{Q,b} & =:\left\{ \psi:Y\to\mathbb{R}\ |\ \psi\left(y\right)=b\left\Vert y\right\Vert_Y ^{2}+\left\langle v,y\right\rangle_Y + d,v\in Y^*,d\in\mathbb{R}\right\},
\end{align}
one cannot express $\psi\circ L$ as a function in $\Phi_{Q,a}$. However, we have
%we write $f^{*}$ with a part of $\psi\circ L$,
%consider 
\begin{align*}
\psi\left(Lx\right) &=b\left\Vert Lx\right\Vert_Y ^{2}+\left\langle v,Lx\right\rangle_Y +d\\
& =b\left\Vert x\right\Vert_{L}^{2}+\left\langle L^{*}v,x\right\rangle_X +d,
\end{align*}
with $\Vert x\Vert_L = \langle L^* L x,x\rangle_X$, \cite{Bau2011}.
Thus, the dual problem \eqref{prob:DCP} takes the form
\begin{align}
\sup_{\psi\in\Psi_{Q,b}} - \beta^c (0,\psi) &
=\sup_{\psi\in\Psi_{Q,b}}\inf_{x\in X}f\left(x\right)+\psi\left(Lx\right) -g^{*}_{\Psi_{Q,b}} \left(\psi\right)\notag\\
&=\sup_{\psi\in \Psi_{Q,b}} \inf_{x\in X}f\left(x\right)+b\Vert x\Vert_L ^{2}+\left\langle L^{*}v,x\right\rangle_X +d-g^{*}_{\Psi_{Q,b}} \left(\psi\right) \notag \\
\label{eq:example dual 2}
& =\sup_{\psi\in \Psi_{Q,b}} -\left(f+b\Vert \cdot \Vert_L ^{2}\right)^{*}_{\Phi_{Q,a}} \left( -\left\langle L^{*}v,\cdot \right\rangle_X -d\right)-g^{*}_{\Psi_{Q,b}} \left(\psi\right). 
\end{align}

% We can define $\Phi$ so that it depends on $L$ by the following
%\[
%\Phi_{L,b} :=\left\{ -\psi\circ L: X \to \mathbb{R}\ \mid \ \psi\in\Psi_b \right\}.
%\]
%Then the dual problem can be written 
%\begin{align}
%\sup_{\psi\in\Psi} - \beta^c (0,\psi) &
%=\sup_{\psi\in\Psi}\inf_{x\in X}f\left(x\right)+\psi\left(Lx\right) -g^{*}\left(\psi\right)\notag\\
%\label{eq:example dual 3}
%& =\sup_{\psi\in\Psi_b} -f^{*} \left(-\psi\circ L\right)-g^{*}\left(\psi\right).
%\end{align}

\item In the construction of dual problem \eqref{prob:DCP-form}, the roles of $\Phi$ and $\Psi$ are not symmetric. We can see this by considering the following pair of elementary functions,
\begin{align*}
\Phi_{Q,a} & :=\left\{ \phi:X\to\mathbb{R}\ |\ \phi\left(x\right)=a\left\Vert x\right\Vert_X ^{2}+\left\langle u,x\right\rangle_X + c ,u\in X^{*},c\in\mathbb{R}\right\}, \\
\Psi_{conv} & :=\left\{ \psi:Y\to\mathbb{R}\ |\ \psi\left(y\right)= \left\langle v,y\right\rangle_Y + d ,v\in Y^{*}, d\in\mathbb{R} \right\}.
\end{align*}
In this case, since $\psi\in\Psi_{conv}$ is an affine function we have 
\[
-\psi \circ L (\cdot) = -\langle  v, L\cdot \rangle_Y -d =-\langle L^* v,\cdot\rangle_X -d :=\phi (\cdot)   \in \Phi_{Q,a}
\]
and the dual problem has the same form as \eqref{eq:example dual 1} 
\begin{align}
\sup_{\psi\in\Psi_{conv}} - \beta^c (0,\psi) & =\sup_{\psi\in\Psi_{conv}} -f^*_{\Phi_{Q,a}} \left(- \psi\circ L \right) -g^{*}_{\Psi_{conv}} (\psi) 
\end{align}

However, when reversing the roles of $\Phi$ and $\Psi$ i.e. by taking
\begin{align*}
\Phi_{conv} & :=\left\{ \phi:X\to\mathbb{R}|\ \phi\left(x\right)=\left\langle u,x\right\rangle_X + c ,u\in X^{*}, c\in \mathbb{R}\right\} \\
\Psi_{Q,b} & :=\left\{ \psi:Y\to\mathbb{R}|\ \psi\left(y\right)= b\left\Vert y\right\Vert ^{2}_Y+\left\langle v,y\right\rangle_Y +d,v\in Y^{*},b,d\in\mathbb{R}\right\}.
\end{align*}
we cannot write the dual problem in the form \eqref{eq:example dual 1}, but it is possible to write, for any $\psi\in\Psi_{Q,b}$, 
\[
\psi(Lx) = b\lVert Lx\rVert^2_Y +\langle v,Lx\rangle_Y +d=\psi_1 (x) + \psi_2 (x),
\]
where $\psi_1 (x) = b\lVert Lx\rVert^2_Y,\psi_2 (x) = \langle L^*v,x\rangle_X +d$ and the dual problem takes the form
\begin{equation}
\sup_{\substack{\psi\in\Psi_{Q,b}\\ \psi\circ L = \psi_1 +\psi_2}} - \beta^c (0,\psi) 
=\sup_{\substack{\psi\in \Psi_{Q,b}\\ \psi\circ L = \psi_1 +\psi_2}} -\left(f+\psi_1 \right)^{*}_{\Phi_A} \left( -\psi_2\right)-g^{*}_{\Psi_{Q,b}} \left(\psi\right).
\end{equation}
\end{enumerate}
\end{example}
Unlike the dual problems, considered in \cite{Bed2020,Bui2021} where $L=Id$, the appearance of general operators $L$ in the minimization problem \eqref{prob:CP} makes it more difficult to present the dual problem without imposing additional  assumptions.
\begin{remark}
\label{rmk:const}
It is clear from the formula \eqref{eq:beta coupling} and the above examples that the presence of constants in the definition of elementary functions is inessential in the $c$-conjugate of $\beta$. Thus, in the sequel, we consider classes $\Phi$ and $\Psi$ of functions in which constants are omitted. For the general consideration related to this fact, see \cite[formula 1.4.4]{Rub2013}.
\end{remark}

\section{Zero Duality Gap for Conjugate Dual}
\label{sec:zero dual gap}
Having defined the 
%primal \eqref{prob:CP} and 
dual problem \eqref{prob:DCP}, now, we discuss conditions for weak duality and zero duality gap. 
As in \eqref{prob:CP}, we assume that the sets $\Phi$ and $\Psi$ of elementary functions are defined on $X$ and $Y$, respectively, and both contain zeros, i.e., $0\in\Phi,0\in\Psi$. 

Let $X$ be a nonempty set and let $Y$ be a vector space. Let $L:X\to Y$ be a mapping from $X$ to $Y$. The dual problem \eqref{prob:DCP} can be equivalently rewritten in the form
\begin{align}
\label{prob:DCP-general}
val (DCP) :=\sup_{\psi\in\Psi} - \beta^c (0,\psi)  &=-\inf_{\psi\in\Psi}\left\{ \left(f+\psi\circ L\right)^{*}_\Phi \left(0\right)+g^{*}_\Psi \left(\psi\right) \right\}
\end{align}
%The primal problem \eqref{prob:CP} can  be written in the conjugate form
For the problem \eqref{prob:CP}, we have
\begin{equation}
val (CP) :=\inf_{x\in X}f\left(x\right)+g\left(Lx\right)=-\left(f+g\circ L\right)^{*}_\Phi\left(0\right).
\end{equation}
Weak duality, i.e. the inequality  $val (CP)\geq val (DCP)$ 
means that
\begin{equation}
\label{eq:weak duality}
\left(f+g\circ L\right)^{*}_\Phi\left( 0 \right) \leq\inf_{\psi\in\Psi}\left(f+\psi\circ L\right)^{*}_\Phi\left( 0 \right) +g^{*}_\Psi \left(\psi\right).
\end{equation}
In fact, a more general inequality holds true.

\begin{theorem}
\label{thm:CP weak dual}
For any $\phi\in\Phi$, it holds
\[
\left(f+g\circ L\right)^{*}_\Phi \left(\phi\right)\leq\inf_{\psi\in\Psi}\left(f+\psi\circ L\right)^{*}_\Phi\left(\phi\right)+g^{*}_\Psi \left(\psi\right).
\]
\end{theorem}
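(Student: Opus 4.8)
The plan is to prove the stronger pointwise statement that for every fixed $\phi\in\Phi$ and every fixed $\psi\in\Psi$ one has
\[
\left(f+g\circ L\right)^{*}_\Phi \left(\phi\right)\leq\left(f+\psi\circ L\right)^{*}_\Phi\left(\phi\right)+g^{*}_\Psi \left(\psi\right),
\]
and then to pass to the infimum over $\psi\in\Psi$ on the right-hand side. Since $\phi$ is arbitrary, the inequality asserted in the theorem follows at once.

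To establish this pointwise inequality I would start from the definition of the $\Psi$-conjugate of $g$, namely $g^{*}_\Psi(\psi)=\sup_{y\in Y}\{\psi(y)-g(y)\}$. Specializing this supremum to the single point $y=Lx$, for an arbitrary $x\in X$, yields the Fenchel--Young type inequality
\[
\psi(Lx)-g(Lx)\leq g^{*}_\Psi(\psi),\qquad\text{equivalently}\qquad -g(Lx)\leq g^{*}_\Psi(\psi)-\psi(Lx).
\]
Adding $\phi(x)-f(x)$ to both sides gives, for every $x\in X$,
\[
\phi(x)-f(x)-g(Lx)\leq \phi(x)-f(x)-\psi(Lx)+g^{*}_\Psi(\psi).
\]

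Finally I would take the supremum over $x\in X$ on both sides. The left-hand supremum is by definition $\left(f+g\circ L\right)^{*}_\Phi(\phi)$, while on the right-hand side the constant $g^{*}_\Psi(\psi)$ factors out of the supremum (when it is finite; when it is $+\infty$ the inequality is trivial), leaving $\left(f+\psi\circ L\right)^{*}_\Phi(\phi)+g^{*}_\Psi(\psi)$. Taking the infimum over $\psi$ then completes the argument. I do not expect a genuine obstacle: the argument is just the Fenchel--Young inequality applied at the image point $Lx$, which is precisely the device that couples the two conjugates taken in the different variable spaces $X$ and $Y$. The only point requiring a little care is the arithmetic in the extended reals, namely that at points where $f(x)=+\infty$ or $g(Lx)=+\infty$ the quantity $\phi(x)-f(x)-g(Lx)$ equals $-\infty$ and thus contributes nothing to the supremum, so the chain of inequalities remains valid throughout.
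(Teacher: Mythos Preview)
Your proof is correct and follows essentially the same approach as the paper: both establish the pointwise inequality $(f+g\circ L)^*_\Phi(\phi)\leq (f+\psi\circ L)^*_\Phi(\phi)+g^*_\Psi(\psi)$ for each fixed $\psi$ via the Fenchel--Young inequality $\psi(Lx)-g(Lx)\leq g^*_\Psi(\psi)$, and then take the infimum over $\psi\in\Psi$. The paper phrases the same step as adding and subtracting $\psi(Lx)$ inside the supremum and then splitting $\sup(a+b)\leq\sup a+\sup b$, which is just a cosmetic variant of your argument.
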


\begin{proof}
For any $\psi\in \Psi$ and $\phi\in\Phi$, we have
\begin{align*}
\left(f+g\circ L\right)^{*}_\Phi \left(\phi\right) & =\sup_{x\in X}\phi\left(x\right)-f\left(x\right)-g\left(Lx\right)\\
 & =\sup_{x\in X}\phi\left(x\right)-f\left(x\right)-g\left(Lx\right)+\psi\left(Lx\right)-\psi\left(Lx\right)\\
 & \leq\sup_{x\in X} \left\{\phi\left(x\right)-f\left(x\right)-\psi\left(Lx\right) \right\}+\sup_{x\in X} \left\{\psi\left(Lx\right)-g\left(Lx\right)\right\}\\
 & \leq\left(f+\psi\circ L\right)^{*}_\Phi  \left(\phi\right) + \sup_{y\in L(X)} \left\{\psi\left(y\right) - g\left(y\right)\right\}\\
 & \leq\left(f+\psi\circ L\right)^{*}_\Phi \left(\phi\right)+g^{*}_\Psi \left(\psi\right).
\end{align*}

Since $\left(f+g\circ L\right)^{*}_\Phi \left(\phi\right)$ is a lower bound
of $\left(f+\psi\circ L\right)^{*}_\Phi \left(\phi\right)+g^{*}_\Psi \left(\psi\right)$
for arbitrary $\psi\in\Psi$, we get
\[
\left(f+g\circ L\right)^{*}_\Phi \left(\phi\right)\leq\inf_{\psi\in\Psi}\left(f+\psi\circ L\right)^{*}_\Phi \left(\phi\right)+g^{*}_\Psi \left(\psi\right),
\]
which completes the proof.
\end{proof}
By taking $\phi = 0$ in the above theorem, we obtain the weak duality \eqref{eq:weak duality} between \eqref{prob:CP} and \eqref{prob:DCP}. Note that no additional assumptions are imposed  on the functions $\psi\circ L$ and $\phi$. However, when zero duality gap is considered, we need to assume a relationship  between $\Phi$ and $\Psi$.

Our first result of zero duality gap is related to \cite[Theorem 3.5]{Bui2021}.
%We give two versions of the result which follows  \cite[Theorem 3.5]{Bui2021}. Let us state the first one.

\begin{theorem}
\label{thm:CP zero gap}
Let $X$ be a nonempty set and $Y$ be a vector space.
Let $f:X\to\left(-\infty,+\infty\right]$, $g:Y\to\left(-\infty,+\infty\right]$, and $L:X\to Y$ be a mapping. Assume that $\text{dom } g\cap L\left(\text{dom }f\right)\neq\emptyset$.
Suppose $0\in\Phi$ and $0\in\Psi$. The following are equivalent.
\begin{enumerate}
\item For every $\varepsilon>0$, there exist $x_{\varepsilon}\in X, \psi_\varepsilon\in\partial_{\varepsilon,\Psi} g(Lx_\varepsilon)$ such that 
$\left(f+\psi_\varepsilon\circ L\right)^*_\Phi \left(0\right) \leq -f(x_\varepsilon) - \psi_\varepsilon\left(Lx_\varepsilon\right) +\varepsilon$.
\item $-val (CP)=\left(f+g\circ L\right)^{*}_\Phi\left(0\right)=\inf_{\psi\in\Psi}\left(f+\psi\circ L\right)^{*}_\Phi\left(0\right)+g^{*}_\Psi \left(\psi\right) = - val (DCP) <+\infty$.
\end{enumerate}
\end{theorem}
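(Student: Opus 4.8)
The plan is to prove the two implications separately. The bridge between the $\varepsilon$-subgradient condition and conjugate inequalities is Proposition~\ref{prop1}(i), which rewrites $\psi_\varepsilon\in\partial_{\varepsilon,\Psi}g(Lx_\varepsilon)$ as the single Fenchel-type inequality $g(Lx_\varepsilon)+g^*_\Psi(\psi_\varepsilon)\le\psi_\varepsilon(Lx_\varepsilon)+\varepsilon$; the baseline estimate is the weak duality of Theorem~\ref{thm:CP weak dual} at $\phi=0$. I abbreviate $A:=(f+g\circ L)^*_\Phi(0)=-val(CP)$ and $B:=\inf_{\psi\in\Psi}\{(f+\psi\circ L)^*_\Phi(0)+g^*_\Psi(\psi)\}=-val(DCP)$, so that weak duality reads $A\le B$ and the goal is to show that ``$A=B<+\infty$'' is equivalent to (i). I first record that the standing hypothesis $\text{dom}\,g\cap L(\text{dom}\,f)\neq\emptyset$ forces $A>-\infty$, so the finiteness in (ii) really means $A=B\in\mathbb{R}$.

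For (i)$\Rightarrow$(ii), I fix $\varepsilon>0$, take $x_\varepsilon,\psi_\varepsilon$ as in (i), and rewrite the membership $\psi_\varepsilon\in\partial_{\varepsilon,\Psi}g(Lx_\varepsilon)$ via Proposition~\ref{prop1}(i) as $g^*_\Psi(\psi_\varepsilon)\le\psi_\varepsilon(Lx_\varepsilon)-g(Lx_\varepsilon)+\varepsilon$. Adding this to the inequality of (i) and using that $B$ is an infimum over $\psi$, I obtain $B\le (f+\psi_\varepsilon\circ L)^*_\Phi(0)+g^*_\Psi(\psi_\varepsilon)\le -f(x_\varepsilon)-g(Lx_\varepsilon)+2\varepsilon\le A+2\varepsilon$. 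Letting $\varepsilon\downarrow0$ gives $B\le A$, hence $A=B$ by weak duality; finiteness of $B$ (and thus of the common value) follows because $Lx_\varepsilon\in\text{dom}\,g$ while $x_\varepsilon\in\text{dom}\,f$ is forced by the inequality in (i), whose left-hand side is $>-\infty$ by properness of $f$.

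For (ii)$\Rightarrow$(i), the construction is more delicate. Fix $\varepsilon>0$ and put $\delta=\varepsilon/2$. Since $B=A$ is finite, I pick a $\delta$-optimal dual point $\psi_\varepsilon$, that is $P+Q\le A+\delta$ with $P:=(f+\psi_\varepsilon\circ L)^*_\Phi(0)$ and $Q:=g^*_\Psi(\psi_\varepsilon)$ (both finite). Crucially, I do \emph{not} select $x_\varepsilon$ to attain the supremum defining $P$; instead I take $x_\varepsilon$ to be a $\delta$-minimizer of the primal problem, so that $-f(x_\varepsilon)-g(Lx_\varepsilon)\ge A-\delta$. Writing $\alpha:=-f(x_\varepsilon)-\psi_\varepsilon(Lx_\varepsilon)$ and $\gamma:=\psi_\varepsilon(Lx_\varepsilon)-g(Lx_\varepsilon)$, I have $\alpha+\gamma\ge A-\delta\ge P+Q-2\delta$ together with the two one-sided bounds $\alpha\le P$ and $\gamma\le Q$ coming from the definitions of the conjugates. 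Then $\gamma\le Q$ forces $\alpha\ge P-2\delta=P-\varepsilon$, which is exactly the inequality required in (i), while $\alpha\le P$ forces $\gamma\ge Q-\varepsilon$, which upon reading Proposition~\ref{prop1}(i) backwards is precisely $\psi_\varepsilon\in\partial_{\varepsilon,\Psi}g(Lx_\varepsilon)$; here $x_\varepsilon\in\text{dom}\,f$ and $Lx_\varepsilon\in\text{dom}\,g$ since $x_\varepsilon$ is a near-minimizer of finite value.

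The step I expect to be the main obstacle is this last selection of $(x_\varepsilon,\psi_\varepsilon)$: a near-maximizer of $-f-\psi_\varepsilon\circ L$ would give the primal inequality in (i) but generally fails the subgradient condition, so one must instead anchor $x_\varepsilon$ at a common primal near-minimizer and let the zero-gap equality $A=B$ — not merely weak duality — force the bounds $\alpha\le P$ and $\gamma\le Q$ to be simultaneously near-tight. This simultaneous near-tightness at one point is the heart of the argument.
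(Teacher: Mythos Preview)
Your proof is correct and follows essentially the same route as the paper's own argument: for (i)$\Rightarrow$(ii) you combine the hypothesis with Proposition~\ref{prop1}(i) and weak duality exactly as the paper does, and for (ii)$\Rightarrow$(i) your choice of a $\delta$-optimal dual $\psi_\varepsilon$ and a $\delta$-optimal primal $x_\varepsilon$, followed by the observation that the two nonnegative gaps $P-\alpha$ and $Q-\gamma$ sum to at most $2\delta=\varepsilon$, is precisely the paper's ``two nonnegative terms summing to less than $\varepsilon$'' step written in different notation.
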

\begin{proof}
(i) $\Rightarrow$ (ii): Thanks to Theorem \ref{thm:CP weak dual}, we only need to prove that 
\[
\inf_{\psi\in\Psi}\left(f+\psi\circ L\right)^{*}_\Phi\left(0\right)+g^{*}_\Psi \left(\psi\right)\leq\left(f+g\circ L\right)^{*}_\Phi\left(0\right)<+\infty.
\]

Let $\varepsilon>0$. From (i), there exist $x_{\varepsilon}\in X$ and $\psi_\varepsilon \in \partial_{\varepsilon, \Psi} g\left(Lx_{\varepsilon}\right)$ such that
\[
\left(f+\psi_\varepsilon\circ L\right)^*_\Phi \left(0\right) \leq -f(x_\varepsilon) - \psi_\varepsilon\left(Lx_\varepsilon\right) +\varepsilon.
\]
From the definition of infimum, we have
\begin{align}
\inf_{\psi\in\Psi}\left(f+\psi\circ L\right)^{*}_\Phi\left(0\right)+g^{*}_\Psi \left(\psi\right) & \leq\left(f+\psi_{\varepsilon}\circ L\right)^{*}_\Phi\left(0\right)+g^{*}_\Psi \left(\psi_{\varepsilon}\right) \notag\\
\label{eq:-f-psiL}
 & \leq-f\left(x_{\varepsilon}\right)-\psi_{\varepsilon}\left(Lx_{\varepsilon}\right)+g^*_\Psi \left(\psi_{\varepsilon}\right)+\varepsilon.
\end{align}
By using inequality \eqref{prop1:Fenchel} applied to $g$ and taking into account $\psi_\varepsilon \in \partial_{\varepsilon,\Psi} g(Lx_\varepsilon)$, we obtain
\begin{align}
\label{eq:-f-gL}
-f\left(x_{\varepsilon}\right)-\psi_{\varepsilon}\left(Lx_{\varepsilon}\right)+g^*_\Psi \left(\psi_{\varepsilon}\right)+\varepsilon & \leq-f\left(x_{\varepsilon}\right)-g\left(Lx_{\varepsilon}\right)+2\varepsilon\\
 & \leq\left(f+g\circ L\right)^{*}_\Phi\left(0\right)+2\varepsilon.\notag
\end{align}
Finally, by using \eqref{eq:-f-psiL} and \eqref{eq:-f-gL},
\[
\inf_{\psi\in\Psi}\left(f+\psi\circ L\right)^{*}_\Phi\left(0\right)+g^{*}_\Psi \left(\psi\right) \leq \left(f+g\circ L\right)^{*}_\Phi\left(0\right)+2\varepsilon.
\]
As both sides of the above inequality do not depend on $x_{\varepsilon}$ or $\psi_{\varepsilon}$, we can let $\varepsilon\to0$ and obtain zero duality gap 
\[
\inf_{\psi\in\Psi}\left(f+\psi\circ L\right)^{*}_\Phi\left(0\right)+g^{*}_\Psi\left(\psi\right)\leq\left(f+g\circ L\right)^{*}_\Phi\left(0\right).
\]
%Lastly, notice that 
%\begin{align*}
%\left(f+g\circ L\right)_{\Phi}^{*}\left(0\right)=\sup_{x\in X}-f\left(x\right)-g\left(Lx\right)	& \leq\sup_{x\in X}\left\{ -f\left(x\right)-\psi_{\varepsilon}\left(Lx\right)\right\} +\sup_{x\in X}\left\{ \psi_{\varepsilon}\left(Lx\right)-g\left(Lx\right)\right\} \\
%	& \leq\left(f+\psi_{\varepsilon}\circ L\right)_{\Phi}^{*}\left(0\right)+g_{\Psi}^{*}\left(\psi_{\varepsilon}\right).
%\end{align*}
From \eqref{eq:-f-psiL} and \eqref{eq:-f-gL}, we have 
\[
\left(f+g\circ L\right)_{\Phi}^{*}\left(0\right) \leq -f\left(x_{\varepsilon}\right)-g\left(Lx_{\varepsilon}\right)+2\varepsilon <+\infty,
\]
from the assumption of $f$ and $g$. Thus $\left(f+g\circ L\right)_{\Phi}^{*}\left(0\right)<+\infty$.

(ii) $\Rightarrow$ (i):
From the definition of supremum, for every $\varepsilon>0$
there exists an $x_{\varepsilon} \in X$ such that 
\[
\left(f+g\circ L\right)^{*}_\Phi\left(0\right)\leq-f\left(x_{\varepsilon}\right)-g\left(Lx_{\varepsilon}\right)+\varepsilon/2.
\]
 Also, there  exists $\psi_{\varepsilon}\in\Psi$ such that 
\[
\left(f+\psi_{\varepsilon}\circ L\right)^{*}_\Phi\left(0\right)+g^{*}_\Psi \left(\psi_{\varepsilon}\right)-\varepsilon/2<\inf_{\psi\in\Psi}\left(f+\psi\circ L\right)^{*}_\Phi\left(0\right)+g^{*}_\Psi \left(\psi\right).
\]
Zero duality gap gives us 
\[
\left(f+\psi_{\varepsilon}\circ L\right)^{*}_\Phi\left(0\right)+g^{*}_\Psi \left(\psi_{\varepsilon}\right)-\varepsilon/2\leq-f\left(x_{\varepsilon}\right)-g\left(Lx_{\varepsilon}\right)+\varepsilon/2.
\]
After rearranging both sides, we get
\[
\left[\left(f+\psi_{\varepsilon}\circ L\right)^{*}_\Phi\left(0\right)+f\left(x_{\varepsilon}\right)+\psi_{\varepsilon}\left(Lx_{\varepsilon}\right)\right]+\left[g^{*}_\Psi \left(\psi_{\varepsilon}\right)+g\left( Lx_{\varepsilon} \right)-\psi_{\varepsilon}\left(Lx_{\varepsilon}\right)\right]<\varepsilon.
\]
By the definition of conjugate function, each of the two terms is non-negative,
and so each of them has to be smaller than $\varepsilon$. Thus
\begin{align*}
\left(f+\psi_{\varepsilon}\circ L\right)^{*}_\Phi\left(0\right) & <-f\left(x_{\varepsilon}\right)-\psi_{\varepsilon}\left(Lx_{\varepsilon}\right)+ \varepsilon\\
g^{*}_\Psi\left(\psi_{\varepsilon}\right)+g\left( Lx_{\varepsilon}\right)-\psi_{\varepsilon}\left(Lx_{\varepsilon}\right) & <\varepsilon,
\end{align*}
which implies that  $\psi_{\varepsilon}\in\partial_{\varepsilon, \Psi} g\left( Lx_{\varepsilon}\right)$.
Hence, (i) holds.
\end{proof}

\begin{remark}
Observe that, in some cases, we cannot find $\phi\in \Phi,\psi\in\Psi$ such that $\phi+\psi=0$ (this latter condition is used in \cite[Theorem 3.5]{Bui2021}). 
In the case $X=Y, \Phi = \Psi, L=Id$ and $\Phi$ is symmetric (for any $\psi \in \Phi, -\psi \in \Phi$), Theorem \eqref{thm:CP zero gap}-(i) means that, for $\psi_\varepsilon\in\partial_{\varepsilon,\Phi} g(x_\varepsilon)$,
\[
-f(x) - \psi_\varepsilon(x) \leq \left(f+\psi_\varepsilon \right)^*_\Phi \left(0\right) \leq -f(x_\varepsilon) - \psi_\varepsilon\left(x_\varepsilon\right) +\varepsilon,
\]
i.e. $-\psi_\varepsilon \in \partial_{\varepsilon,\Phi} f (x_\varepsilon)$. This means that $0\in \bigcap_{\varepsilon>0} \partial_{\varepsilon,\Phi} (f + g)(X)$, which reduces to the respective condition used in  \cite[Theorem 3.5-(i)]{Bui2021} for proving zero duality gap.
\end{remark}

Condition (i) of Theorem \ref{thm:CP zero gap}, could be replaced by conditions which are easier to be checked, when we introduce the following assumption: for given $\phi\in\Phi,\psi\in\Psi$, 
\begin{align}
\label{eq:CP cond 1}
\phi - \psi\circ L \in \Phi. 
\end{align}
Note that condition \eqref{eq:zero gap system} below will be used in the sequel to check zero duality gap.
\begin{remark}
When $\Phi$ is a convex cone, then \eqref{eq:CP cond 1} is satisfied when $-\psi\circ L \in \text{rec } \Phi$ where $\text{rec }\Phi = \left\{ \varphi\in F(X): \varphi + \Phi \subset \Phi\right\}$ is the recession cone of $\Phi$ and $F(X)$ is a linear space of all functions defined on $X$.
\end{remark}
\begin{theorem}
\label{thm:CP zero gap v2}
Let $f:X\to\left(-\infty,+\infty\right], g:Y\to\left(-\infty,+\infty\right]$ be such that $\text{dom }g\cap L\left(\text{dom }f\right)\neq\emptyset$. Let $L:X\to Y$ be a mapping from $X$ into $Y$. Suppose that $0\in\Phi$ and $0\in \Psi$. Consider the following conditions.
\begin{enumerate}
\item For every $\varepsilon>0$, there exist $x_\varepsilon\in X,\phi_\varepsilon \in\partial_{\varepsilon, \Phi} f (x_\varepsilon), \psi_\varepsilon \in \partial_{\varepsilon, \Psi}g \left(Lx_\varepsilon\right)$ such that

\begin{equation}
\label{eq:zero gap system}
\begin{cases}
\phi_\varepsilon (z) + \psi_\varepsilon (Lz) \geq - \varepsilon & \forall z\in X\\
\phi_\varepsilon(x_\varepsilon) +\psi_\varepsilon(Lx_\varepsilon)\leq \varepsilon.
\end{cases}
\end{equation} 
\item $-val(CP) = \left(f+g\circ L\right)^{*}_\Phi\left(0\right)=\inf_{\psi\in\Psi}\left(f+\psi\circ L\right)^{*}_\Phi\left(0\right)+g^{*}_\Psi\left(\psi\right) = -val (DCP)<+\infty$. 
\end{enumerate}
We have (i) $\Rightarrow$ (ii). If, for every $\varepsilon >0$, there exists $\psi_\varepsilon \in \Psi$ such that
\[
\left(f+\psi_{\varepsilon}\circ L\right)^{*}_\Phi\left(0\right)+g^{*}_\Psi\left(\psi_{\varepsilon}\right)-\varepsilon/2 <\inf_{\psi\in\Psi}\left(f+\psi\circ L\right)^{*}_\Phi\left(0\right)+g^{*}_\Psi \left(\psi\right)
\]
and $-\psi_\varepsilon\circ L \in \Phi$, then (i)$\Leftrightarrow$(ii).
\end{theorem}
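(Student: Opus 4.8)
The plan is to prove the two implications separately: (i)$\Rightarrow$(ii) holds unconditionally, while (ii)$\Rightarrow$(i) uses the extra near-optimality hypothesis. For the forward direction I would reduce to the already-established Theorem~\ref{thm:CP zero gap}. Fix $\varepsilon>0$ and take $x_\varepsilon$, $\phi_\varepsilon\in\partial_{\varepsilon,\Phi}f(x_\varepsilon)$, $\psi_\varepsilon\in\partial_{\varepsilon,\Psi}g(Lx_\varepsilon)$ satisfying \eqref{eq:zero gap system}. The aim is to control $(f+\psi_\varepsilon\circ L)^{*}_\Phi(0)=\sup_{z\in X}\{-f(z)-\psi_\varepsilon(Lz)\}$. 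Using the first line of \eqref{eq:zero gap system} to replace $-\psi_\varepsilon(Lz)$ by $\phi_\varepsilon(z)+\varepsilon$, then the subgradient inequality for $\phi_\varepsilon\in\partial_{\varepsilon,\Phi}f(x_\varepsilon)$ to pass from $z$ to $x_\varepsilon$, and finally the second line of \eqref{eq:zero gap system} to return to $-\psi_\varepsilon(Lx_\varepsilon)$, one obtains
\[
(f+\psi_\varepsilon\circ L)^{*}_\Phi(0)\le -f(x_\varepsilon)-\psi_\varepsilon(Lx_\varepsilon)+3\varepsilon.
\]
Since $\partial_{\varepsilon/3,\Psi}g\subseteq\partial_{\varepsilon,\Psi}g$, applying this with $\varepsilon$ replaced by $\varepsilon/3$ produces exactly condition (i) of Theorem~\ref{thm:CP zero gap}, and (ii) follows. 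Alternatively, one finishes directly by adding $g^{*}_\Psi(\psi_\varepsilon)\le\psi_\varepsilon(Lx_\varepsilon)-g(Lx_\varepsilon)+\varepsilon$ from \eqref{prop1:Fenchel}, which collapses the bound to $-f(x_\varepsilon)-g(Lx_\varepsilon)+4\varepsilon\le (f+g\circ L)^{*}_\Phi(0)+4\varepsilon$; letting $\varepsilon\to0$ and invoking weak duality \eqref{eq:weak duality} yields the equalities, with finiteness inherited from $x_\varepsilon\in\text{dom } f$ and $Lx_\varepsilon\in\text{dom } g$.

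For (ii)$\Rightarrow$(i) I would mirror the converse argument of Theorem~\ref{thm:CP zero gap}. Given $\varepsilon>0$, choose $x_\varepsilon$ with $(f+g\circ L)^{*}_\Phi(0)\le -f(x_\varepsilon)-g(Lx_\varepsilon)+\varepsilon/2$, which is possible because (ii) makes this value finite (forcing $x_\varepsilon\in\text{dom } f$, $Lx_\varepsilon\in\text{dom } g$), and take the near-minimizer $\psi_\varepsilon$ supplied by the extra hypothesis, which additionally satisfies $-\psi_\varepsilon\circ L\in\Phi$. Combining these two estimates with the zero duality gap from (ii) gives
\[
\bigl[(f+\psi_\varepsilon\circ L)^{*}_\Phi(0)+f(x_\varepsilon)+\psi_\varepsilon(Lx_\varepsilon)\bigr]+\bigl[g^{*}_\Psi(\psi_\varepsilon)+g(Lx_\varepsilon)-\psi_\varepsilon(Lx_\varepsilon)\bigr]<\varepsilon,
\]
where both bracketed terms are nonnegative by the definition of the conjugate, hence each is strictly less than $\varepsilon$. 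The second bracket being $<\varepsilon$ is, through \eqref{prop1:Fenchel}, precisely the statement $\psi_\varepsilon\in\partial_{\varepsilon,\Psi}g(Lx_\varepsilon)$.

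The decisive step, and the only place the hypothesis $-\psi_\varepsilon\circ L\in\Phi$ enters, is the construction of $\phi_\varepsilon$: I would simply set $\phi_\varepsilon:=-\psi_\varepsilon\circ L\in\Phi$. Then both lines of \eqref{eq:zero gap system} hold trivially, since $\phi_\varepsilon(z)+\psi_\varepsilon(Lz)=0$ for every $z\in X$. Furthermore $f^{*}_\Phi(\phi_\varepsilon)=\sup_{z}\{-\psi_\varepsilon(Lz)-f(z)\}=(f+\psi_\varepsilon\circ L)^{*}_\Phi(0)$, so the first bracket above rewrites as $f(x_\varepsilon)+f^{*}_\Phi(\phi_\varepsilon)-\phi_\varepsilon(x_\varepsilon)<\varepsilon$, which by \eqref{prop1:Fenchel} means exactly $\phi_\varepsilon\in\partial_{\varepsilon,\Phi}f(x_\varepsilon)$; this delivers the triple required in (i). I expect the main obstacle to be conceptual rather than computational: one must recognize that the natural choice $\phi_\varepsilon=-\psi_\varepsilon\circ L$ is the one that makes the system \eqref{eq:zero gap system} trivially solvable, and that this choice is admissible only because the added hypothesis guarantees $-\psi_\varepsilon\circ L$ lies in $\Phi$ — without which the system cannot in general be satisfied.
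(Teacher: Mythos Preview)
Your proof is correct and follows essentially the same route as the paper. For (i)$\Rightarrow$(ii) your direct alternative (bounding $(f+\psi_\varepsilon\circ L)^*_\Phi(0)+g^*_\Psi(\psi_\varepsilon)$ by $-f(x_\varepsilon)-g(Lx_\varepsilon)$ plus a multiple of $\varepsilon$, then letting $\varepsilon\to0$) is exactly the paper's computation, while your first route via Theorem~\ref{thm:CP zero gap} is a harmless repackaging of the same estimate; for (ii)$\Rightarrow$(i) your argument---splitting into the two nonnegative brackets and setting $\phi_\varepsilon:=-\psi_\varepsilon\circ L$---is identical to the paper's, only spelled out more explicitly.
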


\begin{proof}
(i) $\Rightarrow$ (ii). For any $\psi \in \Psi$
\[
\inf_{\psi\in\Psi}\left(\left(f+\psi\circ L\right)^{*}_\Phi\left(0\right)+g^{*}_\Psi \left(\psi\right)\right)\leq\left(f+\psi\circ L\right)^{*}_\Phi\left(0\right)+g^{*}_\Psi \left(\psi\right).
\]
By using inequality \eqref{prop1:Fenchel}, for $\psi_\varepsilon \in\partial_{\varepsilon, \Psi} g(Lx_\varepsilon), \phi_\varepsilon\in \partial_{\varepsilon, \Phi} f(x_\varepsilon)$
\begin{align*}
\left(f+\psi_\varepsilon\circ L\right)^{*}_\Phi\left(0\right)+g^{*}_\Psi\left(\psi_\varepsilon\right) & \leq\left(f+\psi_\varepsilon\circ L\right)^{*}_\Phi\left(0\right)+\psi_\varepsilon\left(Lx_\varepsilon\right)-g\left(Lx_\varepsilon\right)+\varepsilon\\
 & =\sup_{x\in X}\left\{ -\psi_\varepsilon\left(Lx\right)-f\left(x\right)\right\} +\psi_\varepsilon\left(Lx_\varepsilon\right)-g\left(Lx_\varepsilon\right)+\varepsilon\\
 & \leq f^{*}_\Phi \left(\phi_\varepsilon\right)+\psi_\varepsilon\left(Lx_\varepsilon\right)-g\left(Lx_\varepsilon\right)+2\varepsilon\\
 & \leq\phi_\varepsilon\left(x_\varepsilon\right)-f\left(x_\varepsilon\right)+\varepsilon+\psi_\varepsilon\left(Lx_\varepsilon\right)-g\left(Lx_\varepsilon\right)+2\varepsilon\\
 & \leq3\varepsilon-f\left(x_\varepsilon\right)-g\left(Lx_\varepsilon\right)\\
 & \leq3\varepsilon+\left(f+g\circ L\right)^{*}_\Phi\left(0\right),
\end{align*}
where in the third estimation, we use \eqref{eq:zero gap system} to obtain $f^*_\Phi (\phi_\varepsilon)$.

By letting $\varepsilon\to0$ we obtain zero duality gap. We can use the same argument as in  Theorem \ref{thm:CP zero gap} to prove that 
\[
\left(f+g\circ L\right)^{*}_\Phi\left(0\right)<+\infty.
\]

(ii) $\Rightarrow$ (i). We have 
\[
\inf_{\psi\in\Psi}\left(\left(f+\psi\circ L\right)^{*}_\Phi\left(0\right)+g^{*}_\Psi \left(\psi\right)\right)=\left(f+g\circ L\right)^{*}_\Phi\left(0\right)<+\infty,
\]
so for every $\varepsilon > 0$, there exists  $\psi_{\varepsilon}\in\Psi$ such that 
\[
\left(f+\psi_{\varepsilon}\circ L\right)^{*}_\Phi\left(0\right)+g^{*}_\Psi \left(\psi_{\varepsilon}\right)-\varepsilon/2 <\left(f+g\circ L\right)^{*}_\Phi\left(0\right),
\]
and $-\psi_\varepsilon \circ L\in\Phi$.
Moreover, there exists an $x_\varepsilon\in X$ such that 
\[
\left(f+g\circ L\right)^{*}_\Phi\left(0\right)<\varepsilon/2 -f\left(x_\varepsilon\right)-g\left(Lx_\varepsilon\right).
\]
Combining the two inequalities gives us
\[
\left(f+\psi_{\varepsilon}\circ L\right)^{*}_\Phi\left(0\right)+g^{*}_\Psi \left(\psi_{\varepsilon}\right)+f\left(x_\varepsilon\right)+g\left(Lx_\varepsilon\right)<\varepsilon.
\]
 By adding and substracting $\psi_{\varepsilon}\left(Lx_\varepsilon\right)$, we get
\[
\left[\left(f+\psi_{\varepsilon}\circ L\right)^{*}_\Phi\left(0\right)+f\left(x_\varepsilon\right)+\psi_{\varepsilon}\left(Lx_\varepsilon\right)\right]+\left[g^{*}_\Psi \left(\psi_{\varepsilon}\right)+g\left(Lx_\varepsilon\right)-\psi_{\varepsilon}\left(Lx_\varepsilon\right)\right]<\varepsilon.
\]

Since each term is nonnegative, we obtain 
\begin{align*}
\left(f+\psi_{\varepsilon}\circ L\right)^{*}_\Phi\left(0\right)+f\left(x_\varepsilon\right)+\psi_{\varepsilon}\left(Lx_\varepsilon\right) & \leq\varepsilon\\
g^{*}_\Psi\left(\psi_{\varepsilon}\right)+g\left(Lx_\varepsilon\right)-\psi_{\varepsilon}\left(Lx_\varepsilon\right) & \leq\varepsilon.
\end{align*}

Since $-\psi_{\varepsilon}\circ L \in \Phi$, and thus $\left(f+\psi_{\varepsilon}\circ L\right)^{*}_\Phi\left(0\right)=  f^{*}_\Phi \left(-\psi_\varepsilon \circ L\right)$. By the above inequalities,
$\psi_{\varepsilon}\in\partial_{\varepsilon, \Psi} g\left(Lx_{\varepsilon}\right)$
and 
\[
\left(f+\psi_{\varepsilon}\circ L\right)^{*}_\Phi\left(0\right)+f\left(x_\varepsilon\right)+\psi_{\varepsilon}\left(Lx_\varepsilon\right)=f^{*}_\Phi \left(-\psi_\varepsilon \circ L \right)+f\left(x_\varepsilon\right)+ \psi_{\varepsilon} (L x_\varepsilon) \leq\varepsilon
\]
which is equivalent to $-\psi_\varepsilon \circ L\in\partial_{\varepsilon, \Phi} f\left(x_\varepsilon\right)$.
Hence, (i) is proved.
\end{proof}
%{\color{blue} Delete Remark 3 as it seems unnecessary}
%\begin{remark}
%Statement (i) from Corollary \ref{thm:CP zero gap v2} reduces to  \cite[Theorem 3.5-(i)]{Bui2021} in the case of two functions, $L=Id$, and $\Phi=\Psi$. In addition, we need to assume \eqref{eq:CP cond 1} so that condition $\psi\circ L +\phi =0$ holds for $\phi\in\Phi,\psi\in \Psi$.
%\end{remark}

% add another remark

%\begin{remark}
%As we notice in Theorem 4.3-(ii): We can ensure the existence of $\psi_\varepsilon\in\Psi,x_\varepsilon\in X$ for every $\varepsilon>0$.
%By replacing condition \eqref{eq:CP cond 1} with a milder condition i.e. there exists $\phi_\varepsilon\in \Phi$ such that
%\begin{equation}
%\label{eq:zero gap system reverse}
%\begin{cases}
%\left(f+\psi_{\varepsilon}\circ L\right)^{*}_\Phi\left(0\right)+g^{*}_\Psi\left(\psi_{\varepsilon}\right)-\varepsilon/2 <\inf_{\psi\in\Psi}\left(f+\psi\circ L\right)^{*}_\Phi\left(0\right)+g^{*}_\Psi \left(\psi\right)\\
%\phi_\varepsilon (z) + \psi_\varepsilon (Lz) \leq - \varepsilon/2, \quad \forall z\in X\\
%\phi_\varepsilon(x_\varepsilon) +\psi_\varepsilon(Lx_\varepsilon)\geq \varepsilon/2,
%\end{cases}
%\end{equation}
%then $\phi_\varepsilon \in\partial_{\varepsilon, \Phi} f (x_\varepsilon), \psi_\varepsilon \in \partial_{\varepsilon, \Psi}g \left(Lx_\varepsilon\right)$. Unfortunately we cannot guarantee that \eqref{eq:zero gap system} holds.
%\end{remark}}
Next we obtain the existence of optimal solution to \eqref{prob:CP} in the spirit of \cite[Theorem 3.6]{Bui2021}.

\begin{theorem}
\label{thm:CP-strong dual}
Let $f:X\to\left(-\infty,+\infty\right],g:Y\to\left(-\infty,+\infty\right]$ be such that $\text{dom }g\cap L\left(\text{dom }f\right)\neq\emptyset$ and let $L:X\to Y$ be a mapping from $X$ into $Y$. Suppose that $0\in\Phi,0\in\Psi$ and $x\in \text{dom }g\cap L\left(\text{dom }f\right) $. Consider the following conditions.
\begin{enumerate}
\item For all $\varepsilon>0$, there exist $\phi_\varepsilon \in \partial_{\varepsilon, \Phi} f(x),\psi_\varepsilon \in \partial_{\varepsilon, \Psi} g\left(Lx\right)$ such that 
\begin{align*}
\begin{cases}
\phi_\varepsilon (z) + \psi_\varepsilon (Lz) \geq -\varepsilon \quad \text{for all } z\in X \\
\phi_\varepsilon (x)+\psi_\varepsilon (Lx)\leq \varepsilon.
\end{cases}
\end{align*}

\item $-val(CP) = \left(f+g\circ L\right)^{*}_\Phi\left(0\right)=\inf_{\psi\in\Psi}\left(f+\psi\circ L\right)^{*}_\Phi\left(0\right)+g^{*}_\Psi\left(\psi\right)= val (DCP)<+\infty$ and $x$ is an optimal solution to \eqref{prob:CP}. 
\end{enumerate}
We have (i) $\Rightarrow$ (ii). For every $\varepsilon>0$, if there exists $\psi_\varepsilon \in \Psi, \left(f+\psi_{\varepsilon}\circ L\right)^{*}_\Phi\left(0\right)+g^{*}_\Psi\left(\psi_{\varepsilon}\right)-\varepsilon/2 <\inf_{\psi\in\Psi}\left(f+\psi\circ L\right)^{*}_\Phi\left(0\right)+g^{*}_\Psi\left(\psi\right)$ such that $-\psi_\varepsilon\circ L \in \Phi$, then the two statements are equivalent.
\end{theorem}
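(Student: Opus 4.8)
The plan is to mirror the proof of Theorem \ref{thm:CP zero gap v2}, exploiting the fact that the present hypothesis is exactly the special case in which the $\varepsilon$-dependent point $x_\varepsilon$ is replaced by a single fixed $x\in\text{dom }g\cap L(\text{dom }f)$. This extra rigidity is what will promote zero duality gap to attainment of the infimum at $x$.

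For (i) $\Rightarrow$ (ii), I would reproduce the estimate chain from Theorem \ref{thm:CP zero gap v2}. Starting from the trivial bound $\inf_{\psi}\{(f+\psi\circ L)^*_\Phi(0)+g^*_\Psi(\psi)\}\le(f+\psi_\varepsilon\circ L)^*_\Phi(0)+g^*_\Psi(\psi_\varepsilon)$, I would apply the Fenchel inequality \eqref{prop1:Fenchel} to $g$ at $Lx$ (using $\psi_\varepsilon\in\partial_{\varepsilon,\Psi}g(Lx)$), then use the first line of \eqref{eq:zero gap system} in the form $-\psi_\varepsilon(Lz)\le\phi_\varepsilon(z)+\varepsilon$ to bound $\sup_{z\in X}\{-\psi_\varepsilon(Lz)-f(z)\}$ by $f^*_\Phi(\phi_\varepsilon)+\varepsilon$, then apply \eqref{prop1:Fenchel} to $f$ at $x$ (using $\phi_\varepsilon\in\partial_{\varepsilon,\Phi}f(x)$), and finally invoke $\phi_\varepsilon(x)+\psi_\varepsilon(Lx)\le\varepsilon$. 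This yields
\[
\inf_{\psi\in\Psi}\big[(f+\psi\circ L)^*_\Phi(0)+g^*_\Psi(\psi)\big]\ \le\ -f(x)-g(Lx)+4\varepsilon\ \le\ (f+g\circ L)^*_\Phi(0)+4\varepsilon.
\]
Combined with weak duality (Theorem \ref{thm:CP weak dual} at $\phi=0$), letting $\varepsilon\to0$ gives the zero duality gap. The decisive new observation is that, since $x$ is fixed, the same display reads $-f(x)-g(Lx)\le(f+g\circ L)^*_\Phi(0)\le -f(x)-g(Lx)+4\varepsilon$; as the left inequality is automatic (the middle term is a supremum over $X$), letting $\varepsilon\to0$ forces $(f+g\circ L)^*_\Phi(0)=-f(x)-g(Lx)$. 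This says precisely that $x$ attains the infimum in \eqref{prob:CP}, i.e. $x$ is optimal, and finiteness of $(f+g\circ L)^*_\Phi(0)$ is immediate from $x\in\text{dom }g\cap L(\text{dom }f)$.

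For the converse (ii) $\Rightarrow$ (i) under the stated extra assumption, I would argue as before. Optimality of $x$ gives $(f+g\circ L)^*_\Phi(0)=-f(x)-g(Lx)$, so the chosen $\psi_\varepsilon$ (within $\varepsilon/2$ of the dual infimum, with $-\psi_\varepsilon\circ L\in\Phi$) satisfies, after adding and subtracting $\psi_\varepsilon(Lx)$,
\[
\big[(f+\psi_\varepsilon\circ L)^*_\Phi(0)+f(x)+\psi_\varepsilon(Lx)\big]+\big[g^*_\Psi(\psi_\varepsilon)+g(Lx)-\psi_\varepsilon(Lx)\big]<\varepsilon.
\]
Each bracket is nonnegative by definition of the conjugates, hence each is $<\varepsilon$; the second yields $\psi_\varepsilon\in\partial_{\varepsilon,\Psi}g(Lx)$. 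Using $-\psi_\varepsilon\circ L\in\Phi$ to rewrite $(f+\psi_\varepsilon\circ L)^*_\Phi(0)=f^*_\Phi(-\psi_\varepsilon\circ L)$, the first bracket becomes $f(x)+f^*_\Phi(-\psi_\varepsilon\circ L)\le(-\psi_\varepsilon\circ L)(x)+\varepsilon$, i.e. $-\psi_\varepsilon\circ L\in\partial_{\varepsilon,\Phi}f(x)$ via \eqref{prop1:Fenchel}. I would then set $\phi_\varepsilon:=-\psi_\varepsilon\circ L$, for which $\phi_\varepsilon(z)+\psi_\varepsilon(Lz)\equiv0$, so both inequalities of (i) hold with margin $\varepsilon$.

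The routine part is the inequality chain, transcribed almost verbatim from Theorem \ref{thm:CP zero gap v2}; the only genuinely new point—and the crux—is the squeezing that pins $(f+g\circ L)^*_\Phi(0)$ to the value $-f(x)-g(Lx)$ at the fixed $x$, thereby converting the gap statement into attainment.
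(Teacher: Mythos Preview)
Your proposal is correct and follows essentially the same approach as the paper: the paper explicitly says the argument ``coincides with'' that of Theorem~\ref{thm:CP zero gap v2} and then uses the chain $\inf_{\psi}\{(f+\psi\circ L)^*_\Phi(0)+g^*_\Psi(\psi)\}\le 3\varepsilon-f(x)-g(Lx)\le 3\varepsilon+(f+g\circ L)^*_\Phi(0)$ together with weak duality and $\varepsilon\to0$ to conclude both zero duality gap and optimality of $x$, exactly the squeezing you describe. The only cosmetic difference is that you track $4\varepsilon$ rather than the paper's $3\varepsilon$, which is immaterial; your converse direction also matches the paper's Theorem~\ref{thm:CP zero gap v2} argument specialized to the optimal $x$.
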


\begin{proof}
The lines of the proof coincide with the ones of Theorem \ref{thm:CP zero gap v2}. We only need to prove that $x$ is an optimal solution to \eqref{prob:CP}. Now as (i) holds for all $\varepsilon>0$, we have 
\begin{align*}
\inf_{\psi\in\Psi}\left(\left(f+\psi\circ L\right)^{*}_\Phi\left(0\right)+g^{*}_\Psi\left(\psi\right)\right)& \leq\left(f+\psi_\varepsilon\circ L\right)^{*}_\Phi\left(0\right)+g^{*}_\Psi\left(\psi_\varepsilon\right)\\
& \leq3\varepsilon-f\left(x\right)-g\left(Lx\right) \\
& \leq 3\varepsilon + \left(f+g\circ L\right)^*_\Phi (0).
\end{align*}
Letting $\varepsilon\to 0$ in the above inequality as it holds for all $\varepsilon>0$. We obtain zero duality gap
\[
\inf_{\psi\in\Psi}\left(\left(f+\psi\circ L\right)^{*}_\Phi\left(0\right)+g^{*}_\Psi\left(\psi\right)\right) = (f+g\circ L)^*_\Phi (0).
\]
Notice that
\[
\inf_{z\in X} f(z)+ g(Lz) = -(f+g\circ L)^*_\Phi (0) \geq f\left(x\right) + g\left(Lx\right).
\]
As $(f+g\circ L)^*_\Phi (0) <+\infty$, the primal problem \eqref{prob:CP} is finite with $x$ as an optimal solution.
\end{proof}
\begin{remark}\footnote{We thank an annonymuous referee for this observation} 
Let us observe that the condition (i) of Theorem 4.3  and Theorem 4.4 can be replaced by the condition 
\begin{equation}
\label{eq: phi + psi L =0}
\phi_\varepsilon (x) +\psi_\varepsilon (Lx) = 0, \quad \forall x\in X.
\end{equation}
Indeed, if \eqref{eq: phi + psi L =0} holds, which means $-\psi_\varepsilon \circ L = \phi_\varepsilon \in \Phi$, then \eqref{eq:zero gap system} and \eqref{eq:CP cond 1} are satisfied.
\end{remark}

\subsection{Zero duality gap for specific classes \texorpdfstring{$\Phi,\ \Psi$}{Phi,Psi}}

%If $f$ and $g$ are weakly convex functions, we have a
Consider the classes of elementary functions as in \eqref{eq:Phi a set}. i.e. $\Phi_{Q,a}$ and $\Psi_{Q,b}$ defined in \eqref{eq:class weak f} and \eqref{eq:class weak g} for $a,b\leq 0$. We let $c=d=0$ because they do not affect the dual problem and the condition for zero duality gap. The dual problem \eqref{eq:example dual 2} takes the form 
\begin{equation}
 \sup_{\substack{\psi\in \Psi_{Q,b} \\ \psi (\cdot)= -b\Vert \cdot \Vert^2_Y + \langle v, \cdot \rangle_Y} } -\left(f+ b\left\Vert L \cdot \right\Vert _{Y}^{2}\right)^{*}_{\Phi_{Q,a}} \left( -\langle L^* v, \cdot \rangle\right)-g^{*}_{\Psi_{Q,b}} \left(\psi\right).
\end{equation}
\begin{proposition}
Let $X$ and $Y$ be Hilbert spaces with inner products, $\langle\cdot,\cdot\rangle_{X}$ and $\langle\cdot,\cdot\rangle_{Y}$, respectively. Let $L:X\rightarrow Y$ be a continuous linear operator from $X$ to $Y$, and $f:X\rightarrow(-\infty,+\infty]$ be a weakly convex function with modulus $a$. Then $f(x)-b\lVert Lx\rVert^2_Y$ is weakly convex with modulus $a+b\lVert L\rVert^2_{\mathcal{L}(X,Y)}$.
\end{proposition}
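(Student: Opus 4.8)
The plan is to verify the claim directly from Definition~\ref{def:weak convex}. Setting $h:=f-b\lVert L\cdot\rVert_Y^2$, I must show that $h+\bigl(a+b\lVert L\rVert^2\bigr)\lVert\cdot\rVert_X^2$ is convex on $X$, where $\lVert L\rVert=\lVert L\rVert_{\mathcal L(X,Y)}$. By hypothesis, weak convexity of $f$ with modulus $a$ means exactly that $f+a\lVert\cdot\rVert_X^2$ is convex, so the whole argument reduces to isolating this convex piece and controlling the purely quadratic remainder.

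First I would regroup the target function by inserting the convex regularization of $f$:
\[
h(x)+\bigl(a+b\lVert L\rVert^2\bigr)\lVert x\rVert_X^2
=\underbrace{\bigl(f(x)+a\lVert x\rVert_X^2\bigr)}_{\text{convex}}
+b\bigl(\lVert L\rVert^2\lVert x\rVert_X^2-\lVert Lx\rVert_Y^2\bigr),
\]
so the expression is displayed as the sum of the convex function $f+a\lVert\cdot\rVert_X^2$ and a quadratic remainder. The decisive step is then to prove that $q(x):=\lVert L\rVert^2\lVert x\rVert_X^2-\lVert Lx\rVert_Y^2$ is convex. Using $\lVert Lx\rVert_Y^2=\langle L^*Lx,x\rangle_X$, I would rewrite $q(x)=\langle(\lVert L\rVert^2 I-L^*L)x,x\rangle_X$, a quadratic form governed by the self-adjoint operator $\lVert L\rVert^2 I-L^*L$. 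Since $L$ is continuous, $\langle L^*Lx,x\rangle_X=\lVert Lx\rVert_Y^2\le\lVert L\rVert^2\lVert x\rVert_X^2$ for every $x\in X$, i.e. $\lVert L\rVert^2 I-L^*L\succeq 0$; hence $q$ is a nonnegative convex quadratic form. Multiplying $q$ by $b$ (nonnegative, as is needed for $a+b\lVert L\rVert^2$ to be an admissible modulus in Definition~\ref{def:weak convex}) preserves convexity, and adding the convex function $f+a\lVert\cdot\rVert_X^2$ keeps the sum convex, which is precisely the assertion.

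The only non-routine ingredient is the positive semidefiniteness $\lVert L\rVert^2 I-L^*L\succeq 0$: this is where the continuity of $L$ and the definition of the operator norm $\lVert L\rVert_{\mathcal L(X,Y)}$ enter, and it is exactly this bound $\lVert Lx\rVert_Y^2\le\lVert L\rVert^2\lVert x\rVert_X^2$ that generates the extra modulus $b\lVert L\rVert^2$. Everything else — the stability of convexity under nonnegative scaling and under addition, and the algebraic regrouping — is elementary, so I expect no further obstacle.
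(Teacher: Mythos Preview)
Your argument is correct. Both proofs hinge on the same inequality $\lVert Lx\rVert_Y^2\le\lVert L\rVert^2\lVert x\rVert_X^2$, but the routes differ slightly. The paper works with the equivalent secant-line characterization of weak convexity, expands $\lVert L(\lambda x_1+(1-\lambda)x_2)\rVert_Y^2$ via the Hilbert-space identity $\lVert \lambda u+(1-\lambda)v\rVert^2=\lambda\lVert u\rVert^2+(1-\lambda)\lVert v\rVert^2-\lambda(1-\lambda)\lVert u-v\rVert^2$, and then bounds the cross term $\lVert Lx_1-Lx_2\rVert_Y^2$ by $\lVert L\rVert^2\lVert x_1-x_2\rVert_X^2$. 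You instead stay with Definition~\ref{def:weak convex} and regroup so that the only non-trivial piece is the convexity of the quadratic form $q(x)=\langle(\lVert L\rVert^2 I-L^*L)x,x\rangle_X$, which follows from positive semidefiniteness of $\lVert L\rVert^2 I-L^*L$. Your presentation is a bit cleaner and more operator-theoretic; the paper's computation is more hands-on but makes the role of the parallelogram-type identity explicit. Both need $b\ge 0$ at exactly the same point (multiplying the nonnegative remainder by $b$), and the paper likewise inserts this hypothesis inside the proof.
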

\begin{proof}
By Definition \ref{def:weak convex}, 
a function $f:X\rightarrow(-\infty,+\infty]$ is weakly convex on $X$ with modulus $a>0$ if $f+a\lVert x\rVert^{2}_{X}$, is a convex function, or equivalently for any $x_1,x_2 \in X$ and $\lambda\in [0,1]$,
$$
f(\lambda x_{1}+(1-\lambda)x_{2})\le \lambda f(x_{1})+(1-\lambda)f(x_{2})+a\lambda(1-\lambda)\|x_{1}-x_{2}\|_{X}^{2}.
$$
Now we analyze the weak convexity of the function $f(x)-b\lVert Lx\rVert^2_Y$, where $b\geq 0$. We have
\begin{align*}
\lVert L(\lambda x_1 + (1-\lambda)x_2)\rVert^2_Y & = \lVert \lambda Lx_1 +(1-\lambda)Lx_2\rVert^2_Y \\ 
& = \lambda^2 \lVert Lx_1 \rVert^2_Y + (1-\lambda)^2 \lVert Lx_2\rVert^2_Y + 2\lambda (1-\lambda)\langle Lx_1,Lx_2 \rangle_Y \\
&=\lambda \lVert Lx_1 \rVert^2_Y + (1-\lambda) \lVert Lx_2\rVert^2_Y - \lambda (1-\lambda)\lVert Lx_1 - Lx_2 \rVert^2_Y.
\end{align*}
Denote $x_{\lambda}:=\lambda x_{1}+\left(1-\lambda\right)x_{2}$ for $   \lambda\in\left[0,1\right]$. 
Therefore,
\begin{align*}
f\left(x_{\lambda}\right)-b\left\Vert Lx_{\lambda}\right\Vert _{Y}^{2}&\leq\lambda\left(f\left(x_{1}\right)-b\left\Vert Lx_{1}\right\Vert _{Y}^{2}\right)+\left(1-\lambda\right)\left(f\left(x_{2}\right)-b\left\Vert Lx_{2}\right\Vert _{Y}^{2}\right)\\
&+b\lambda\left(1-\lambda\right)\left\Vert Lx_{1}-Lx_{2}\right\Vert _{Y}^{2}+a\lambda\left(1-\lambda\right)\left\Vert x_{1}-x_{2}\right\Vert _{X}^{2}.
\end{align*}
As $L$ is linear and continuous, we obtain 
\begin{align*}
f\left(x_{\lambda}\right)-b\left\Vert Lx_{\lambda}\right\Vert _{Y}^{2}&\leq\lambda\left(f\left(x_{1}\right)-b\left\Vert Lx_{1}\right\Vert _{Y}^{2}\right)+\left(1-\lambda\right)\left(f\left(x_{2}\right)-b\left\Vert Lx_{2}\right\Vert _{Y}^{2}\right)\\
&+\left(a+b \lVert L\rVert^2_{\mathcal{L}(X,Y)} \right)\lambda\left(1-\lambda\right)\left\Vert x_{1}-x_{2}\right\Vert _{X}^{2},
\end{align*}
where $\lVert L\rVert_{\mathcal{L}(X,Y)} = \sup_{\Vert x\Vert_X \leq 1} \Vert Lx \Vert_Y$.
This means $f(x)-b\lVert Lx\rVert^2_Y$ is weakly convex with modulus $a+b\lVert L\rVert^2_{\mathcal{L}(X,Y)}$.
\end{proof}

The following corollary follows from Theorem \ref{thm:CP zero gap v2}.
\begin{corollary}
Let $X,Y$ be Hilbert spaces. Let $f:X\to\left(-\infty,+\infty\right],\ g:Y\to\left(-\infty,+\infty\right]$ and $L:X\to Y$ be a continuous linear operator. Assume that 
$a\geq 0,b\in \mathbb{R}$,
$c=d=0$ in the definitions of the classes of elementary functions $\Phi_{Q,a}$ and $\Psi_{Q,b}$ (\eqref{eq:class weak f} and \eqref{eq:class weak g}), respectively, and $\text{dom } g\cap L\left(\text{dom }f\right)\neq\emptyset$.
The following are equivalent.
\begin{enumerate}
\item For every $\varepsilon > 0$, there exist $x_{\varepsilon}\in X,\psi_\varepsilon \in\partial_{\varepsilon, \Psi_{Q,b}} g\left(Lx_{\varepsilon}\right), \phi_\varepsilon \in\partial_{\varepsilon, \Phi_{Q,a}}\left(f+b_{\varepsilon}\left\Vert L\cdot\right\Vert _{Y}^{2}\right)\left(x_{\varepsilon}\right)$ such that 
\begin{align*}
\begin{cases}
\phi_\varepsilon\left(x\right)+\psi_\varepsilon\left(Lx\right) \geq b \left\Vert Lx\right\Vert _{Y}^{2}-\varepsilon & \forall x\in X\\
\phi_\varepsilon\left(x_\varepsilon\right)+\psi_\varepsilon\left(Lx_\varepsilon \right) \leq  b \left\Vert Lx_\varepsilon\right\Vert _{Y}^{2}+\varepsilon,
\end{cases}
\end{align*}
where $\psi_\varepsilon (y) = b \Vert y\Vert^2_Y + \langle v_\varepsilon,y \rangle_Y$.
\item $\left(f+g\circ L\right)^{*}_{\Phi_{Q,a}} \left(0\right)={\displaystyle \inf_{\psi\in\Psi_{Q,b}}}\left(f+b\left\Vert L \cdot \right\Vert  _{Y}^{2}\right)^*_{\Phi_{Q,a}}  \left(-\langle L^* v,\cdot \rangle_X \right)+g^{*}_{\Psi_{Q,b}} \left(\psi\right)<+\infty$, where $\psi (y) = b \Vert y\Vert^2_Y + \langle v,y \rangle_Y$.
\end{enumerate}
\end{corollary}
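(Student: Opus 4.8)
The plan is to obtain the corollary by specializing Theorem \ref{thm:CP zero gap v2} to the quadratic classes $\Phi_{Q,a}$ and $\Psi_{Q,b}$ of \eqref{eq:class weak f}--\eqref{eq:class weak g}, after moving the quadratic part of $\psi\circ L$ onto $f$. Since $L$ is linear and continuous it has an adjoint $L^{*}$, and for $\psi\in\Psi_{Q,b}$ with $c=d=0$, writing $\psi(y)=b\lVert y\rVert_Y^2+\langle v,y\rangle_Y$, we get
\[
\psi(Lx)=b\lVert Lx\rVert_Y^2+\langle v,Lx\rangle_Y=b\lVert Lx\rVert_Y^2+\langle L^{*}v,x\rangle_X ,
\]
so $\psi\circ L$ is the sum of a quadratic term $b\lVert L\cdot\rVert_Y^2$ and a linear term $\langle L^{*}v,\cdot\rangle_X$. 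I therefore set $\tilde f:=f+b\lVert L\cdot\rVert_Y^2$ and $\tilde g:=g-b\lVert\cdot\rVert_Y^2$; these leave the problem unchanged, $\tilde f+\tilde g\circ L=f+g\circ L$, so the primal conjugate $(f+g\circ L)^{*}_{\Phi_{Q,a}}(0)$ and $val(CP)$ are unaffected.

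First I would rewrite the dual objective in Theorem \ref{thm:CP zero gap v2}(ii). From the split above, $f+\psi\circ L=\tilde f+\langle L^{*}v,\cdot\rangle_X$, so directly from the definition of the $\Phi_{Q,a}$-conjugate,
\[
(f+\psi\circ L)^{*}_{\Phi_{Q,a}}(0)=\sup_{x\in X}\left\{-\tilde f(x)-\langle L^{*}v,x\rangle_X\right\}=\big(f+b\lVert L\cdot\rVert_Y^2\big)^{*}_{\Phi_{Q,a}}\big(-\langle L^{*}v,\cdot\rangle_X\big).
\]
Moving $b\lVert\cdot\rVert_Y^2$ inside the supremum defining the conjugate of $g$ gives $g^{*}_{\Psi_{Q,b}}(\psi)=\tilde g^{*}_{\Psi_{conv}}(\langle v,\cdot\rangle_Y)$, and the map $\psi\mapsto v$ is a bijection of $\Psi_{Q,b}$ onto $\Psi_{conv}$. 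Substituting these two identities into Theorem \ref{thm:CP zero gap v2}(ii) turns it verbatim into condition (ii) of the corollary.

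Next I would apply Theorem \ref{thm:CP zero gap v2} to the pair $(\tilde f,\tilde g)$ with the classes $\Phi_{Q,a}$ on $X$ and the affine class $\Psi_{conv}$ on $Y$ (with $d=0$, i.e. linear functionals $\langle v,\cdot\rangle_Y$). The same bijection $\psi=\tilde\psi+b\lVert\cdot\rVert_Y^2$ gives $\psi_\varepsilon\in\partial_{\varepsilon,\Psi_{Q,b}}g(Lx_\varepsilon)\Leftrightarrow\tilde\psi_\varepsilon\in\partial_{\varepsilon,\Psi_{conv}}\tilde g(Lx_\varepsilon)$, while $\phi_\varepsilon\in\partial_{\varepsilon,\Phi_{Q,a}}\tilde f(x_\varepsilon)$ is literally the subdifferential appearing in the corollary. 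Inserting $\tilde\psi_\varepsilon(Lz)=\langle L^{*}v_\varepsilon,z\rangle_X=\psi_\varepsilon(Lz)-b\lVert Lz\rVert_Y^2$ into the system \eqref{eq:zero gap system}, namely $\phi_\varepsilon(z)+\tilde\psi_\varepsilon(Lz)\geq-\varepsilon$ and $\phi_\varepsilon(x_\varepsilon)+\tilde\psi_\varepsilon(Lx_\varepsilon)\leq\varepsilon$, produces exactly the two inequalities of the corollary, with $b\lVert Lz\rVert_Y^2$ appearing on the right-hand sides. The implication (i) $\Rightarrow$ (ii) of Theorem \ref{thm:CP zero gap v2} then yields (i) $\Rightarrow$ (ii) here.

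For the converse I must check the extra hypothesis of Theorem \ref{thm:CP zero gap v2}: for the near-optimal $\tilde\psi_\varepsilon\in\Psi_{conv}$ (which exists once the dual infimum in (ii) is finite) one needs $-\tilde\psi_\varepsilon\circ L\in\Phi_{Q,a}$. Here $-\tilde\psi_\varepsilon\circ L=-\langle L^{*}v_\varepsilon,\cdot\rangle_X$ is linear, and since $\Phi_{Q,a}$ is stable under addition of linear functionals (equivalently, linear functionals lie in its recession cone, so \eqref{eq:CP cond 1} holds for $\Phi_{Q,a}$ and the linearized class $\Psi_{conv}$), the linear residual is an admissible argument for the $\Phi_{Q,a}$-conjugation used throughout Section \ref{sec:Conjugate dual}, and the identity $(\tilde f+\tilde\psi_\varepsilon\circ L)^{*}_{\Phi_{Q,a}}(0)=\tilde f^{*}_{\Phi_{Q,a}}(-\langle L^{*}v_\varepsilon,\cdot\rangle_X)$ is legitimate. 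This gives (ii) $\Rightarrow$ (i) and closes the equivalence. I expect the main obstacle to be precisely this last point — reconciling the strict class-membership requirement of Theorem \ref{thm:CP zero gap v2} with the fact that, after absorbing $b\lVert L\cdot\rVert_Y^2$ into $f$, the residual coupling is only linear rather than a genuine element of $\Phi_{Q,a}$ — together with the bookkeeping that tracks how the shift by $b\lVert L\cdot\rVert_Y^2$ redistributes the quadratic term between the two conjugations; all the analytic content is already supplied by Theorem \ref{thm:CP zero gap v2}.
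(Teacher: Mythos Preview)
Your route differs from the paper's in two visible ways. First, the paper does not introduce the auxiliary pair $(\tilde f,\tilde g)$ and a second class $\Psi_{conv}$; it works directly with $f,g,\Phi_{Q,a},\Psi_{Q,b}$, expands $\phi_\varepsilon(x)+\psi_\varepsilon(Lx)$ using the explicit quadratic forms, and from the two inequalities of condition (i) together with the $\varepsilon$-subdifferential inequality for $\phi_\varepsilon$ it derives
\[
(f+\psi_\varepsilon\circ L)^{*}_{\Phi_{Q,a}}(0)\le -f(x_\varepsilon)-\psi_\varepsilon(Lx_\varepsilon)+3\varepsilon,
\]
which is exactly condition (i) of Theorem~\ref{thm:CP zero gap}; the equivalence is then read off from that theorem, not from Theorem~\ref{thm:CP zero gap v2}. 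Second, your systematic ``absorb $b\lVert L\cdot\rVert^{2}$ into $f$'' translation makes the bookkeeping cleaner and shows transparently why the dual objective takes the form in (ii), whereas the paper's argument is more computational. For the forward implication (i)$\Rightarrow$(ii) your argument is correct and equivalent in content to the paper's.

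Where your proposal has a genuine gap is the converse (ii)$\Rightarrow$(i) when $a>0$. You correctly flag the obstacle, but the fix you offer does not close it: Theorem~\ref{thm:CP zero gap v2} asks literally that $-\tilde\psi_\varepsilon\circ L\in\Phi$, and the proof of that theorem uses this to set $\phi_\varepsilon:=-\tilde\psi_\varepsilon\circ L\in\partial_{\varepsilon,\Phi}\tilde f(x_\varepsilon)$. The recession-cone statement you invoke (that $\Phi_{Q,a}$ is stable under addition of linear functionals, i.e.\ condition~\eqref{eq:CP cond 1}) is weaker and does not make the purely linear function $-\langle L^{*}v_\varepsilon,\cdot\rangle_X$ an element of $\Phi_{Q,a}$ for $a>0$; consequently it cannot serve as the $\phi_\varepsilon\in\Phi_{Q,a}$ required by condition (i) of the corollary. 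The identity $(\tilde f+\tilde\psi_\varepsilon\circ L)^{*}_{\Phi_{Q,a}}(0)=\tilde f^{*}_{\Phi_{Q,a}}(-\langle L^{*}v_\varepsilon,\cdot\rangle_X)$ is indeed a harmless abuse of notation, but it addresses the wrong issue: what is missing is the production of a genuine $\phi_\varepsilon\in\Phi_{Q,a}$ satisfying the two displayed inequalities and lying in $\partial_{\varepsilon,\Phi_{Q,a}}(f+b\lVert L\cdot\rVert_Y^{2})(x_\varepsilon)$. The paper's own proof is also terse on this direction (it simply defers to Theorem~\ref{thm:CP zero gap}), so your instinct that this is the delicate step is well placed; but as written, your justification for it does not go through.
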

\begin{proof}
%From the assumption 
Let $\varepsilon >0$. By (i),  
there exist $x_{\varepsilon}\in X,\psi_\varepsilon \in\partial_{\varepsilon, \Psi_{Q,b}} g\left(Lx_{\varepsilon}\right), \phi_\varepsilon \in\partial_{\varepsilon, \Phi_{Q,a}}\left(f+b_{\varepsilon}\left\Vert L\cdot\right\Vert _{Y}^{2}\right)\left(x_{\varepsilon}\right)$ such that
%definitions, if $\phi_{\varepsilon}\in\Phi_{Q,a}$, then $\phi_{\varepsilon}\left(x\right)=a \left\Vert x\right\Vert _{X}^{2}+\left\langle u_\varepsilon ,x\right\rangle _{X}$
%and $\psi_\varepsilon \in \Psi_{Q,b}$, means that $\psi_{\varepsilon}\left(y\right)=b \left\Vert y\right\Vert _{Y}^{2}+\left\langle v_\varepsilon ,y\right\rangle _{Y}$.
for all $x\in X$,
\begin{align*}
\phi_{\varepsilon}\left(x\right)+\psi_{\varepsilon}\left(Lx\right) = a \lVert x \rVert^2_X +\langle u_\varepsilon,x\rangle_X +b \lVert Lx \rVert^2_Y +\langle v_\varepsilon,Lx\rangle_Y \geq b \lVert Lx \rVert^2_Y-\varepsilon,
\end{align*}
where $u_\varepsilon\in X,v_\varepsilon \in Y$, i.e.,
\begin{equation}
\label{eq:cor 4.5}
a \lVert x \rVert^2_X +\langle u_\varepsilon + L^* v_\varepsilon,x\rangle_X +\varepsilon \geq 0,\quad \forall x\in X.
\end{equation}
Hence, it must be $u_\varepsilon=-L^{*}v_\varepsilon$. Thus, for $\phi_{\varepsilon}\in\partial_{\varepsilon, \Phi_{Q,a}} \left(f+b\left\Vert L \cdot \right\Vert _{Y}^{2}\right)\left(x_{\varepsilon}\right)$,
we can write 
\begin{align*}
f\left(x_{\varepsilon}\right)+b \left\Vert Lx_{\varepsilon}\right\Vert _{Y}^{2}+\left(f+b \left\Vert L\cdot\right\Vert _{Y}^{2}\right)^{*}_{\Phi_{Q,a}}\left(\phi_{\varepsilon}\right) & \leq\phi_{\varepsilon}\left(x_{\varepsilon}\right)+\varepsilon\\
\left(f+\psi\circ L\right)^{*}_{\Phi_{Q,a}} \left(0\right) & \leq-f\left(x_{\varepsilon}\right)-\psi\left(Lx_{\varepsilon}\right)+3\varepsilon,
\end{align*}
which is similar to the proof of Theorem \ref{thm:CP zero gap}, so the assertion follows from Theorem  \ref{thm:CP zero gap}. 
%so we follow the same
%argument and finish the proof.
\end{proof}
Observe that the above corollary does not hold for $\Phi_{Q,a}$ with $a<0$ due to formula \eqref{eq:cor 4.5}.

We give simple examples illustrating Theorem \ref{thm:CP zero gap v2}.

\begin{example}
\label{ex:zero duality gap 1}
\ 
\begin{itemize} 
\item Let $f\left(x\right)=\left(x+1\right)^{2},g\left(x\right)=4x^{2},L=Id$ and
\begin{align*}
\Phi & =\left\{ \phi\left(x\right)=-ax^{2}+bx,a\geq0,b\in\mathbb{R}\right\} ,\\
\Psi & =\left\{ \psi\left(x\right)=cx,c\in\mathbb{R}\right\},
\end{align*}
(c.f. Remark \ref{rmk:const}).

The conjugates of $f$ and $g$ are given as 
\begin{align*}
f^{*}_\Phi \left(\phi\right) & =\frac{\left(b-2\right)^{2}}{4\left(a+1\right)}-1,\quad a\geq0,b\in\mathbb{R}\\
g^{*}_\Psi\left(\psi\right) & =\frac{c^{2}}{16},\quad c\in\mathbb{R}.
\end{align*}

The $\varepsilon$-subdifferentials of $f$ and $g$ at $x_{0}$ are given as
\begin{align*}
\partial_{\varepsilon, \Phi}f\left(x_{0}\right) & =\left\{ \phi\left(x\right)=-ax^2 +bx:\left(a+1\right)\left(x_{0}-\frac{b-2}{2\left(a+1\right)}\right)^{2}\leq\varepsilon,a\geq0, b\in \mathbb{R}\right\}, \\
\partial_{\varepsilon, \Psi} g\left(x_{0}\right) & =\left\{ \psi\left(x\right) = cx:\left(2x_{0}-\frac{c}{4}\right)^{2}\leq\varepsilon, c\in \mathbb{R}\right\} .
\end{align*}

To verify condition (i) of Theorem \ref{thm:CP zero gap v2}, we find $x_\varepsilon\in \mathbb{R} ,\psi_\varepsilon \in\partial_{\varepsilon, \Psi} g\left(x_\varepsilon\right),\phi_\varepsilon \in\partial_{\varepsilon, \Phi} f\left(x_\varepsilon\right)$
such that 
\[
\begin{cases}
\phi_\varepsilon \left(x\right)+\psi_\varepsilon \left(x\right)\geq -\varepsilon & \forall x\in\mathbb{R}\\
\phi_\varepsilon \left(x_\varepsilon\right)+\psi_\varepsilon \left(x_\varepsilon\right)\leq \varepsilon
\end{cases}.
\]

Now consider the first inequality 
\[
\phi_\varepsilon \left(x\right)+\psi_\varepsilon \left(x\right)=-ax^{2}+\left(b+c\right)x\geq -\varepsilon\quad\forall x\in\mathbb{R},
\]
which gives $a=0,b+c=0$, so the second inequality i.e. $\phi_\varepsilon \left(x_\varepsilon\right)+\psi_\varepsilon\left(x_\varepsilon\right)\leq \varepsilon$, holds for
$x_\varepsilon\in\mathbb{R}$. Hence, Theorem \ref{thm:CP zero gap v2}-(i) holds.

Moreover, as $\psi_\varepsilon \in\partial_{\varepsilon, \Psi}g\left(x_\varepsilon\right),\phi_\varepsilon \in\partial_{\varepsilon, \Phi} f\left(x_\varepsilon\right)$,
we have 
\[
\begin{cases}
\left(x_\varepsilon-\frac{b-2}{2}\right)^{2}\leq\varepsilon\\
\left(2x_\varepsilon+\frac{b}{4}\right)^{2}\leq\varepsilon
\end{cases}.
\]

Let $\varepsilon\to0$, we obtain $b=2x_{0}+2,c=8x_{0}$. Since
$b+c=0$, we get $x_{0}=-\frac{1}{5}$. With $x_{0},a,b,c$ calculated above we get 
\begin{align*}
\inf_{x\in X} f(x) +g(x) = f\left(x_{0}\right)+g\left(x_{0}\right) & =\frac{4}{5},
\end{align*}
and 
\begin{align*}
\sup_{\psi\in \Psi} -(f+\psi)^*_\Phi (0) - g^*_\Psi (\psi) =-f^{*}_\Phi \left(\phi_\varepsilon\right)-g^{*}_\Psi\left(\psi_\varepsilon\right) & =\frac{80}{400}=\frac{4}{5}.
\end{align*}
Hence, we achieve zero duality gap. 
%In this example, condition \eqref{eq:CP cond 1} is always satisfied for all $\psi\in\Psi$ and $0$ whenever $\phi\in\Phi, \phi (x) = bx,b\in\mathbb{R}$ i.e.
%\[
%\phi\left(x\right)+\psi\left(x\right)=\left(b+c\right) x .
%\]

\item Now we reverse the roles of $\Phi$ and $\Psi$ i.e. 
\begin{align*}
\Phi & =\left\{ \phi\left(x\right)=cx,c\in\mathbb{R}\right\} ,\\
\Psi & =\left\{ \psi\left(x\right)=-ax^{2}+bx,a\geq0,b\in\mathbb{R}\right\} .
\end{align*}
 Then the conjugates of $f$ and $g$ are 
\begin{align*}
f^{*}_\Phi \left(\phi\right) & =\frac{\left(c-2\right)^{2}}{4}-1,\\
g^{*}_\Psi\left(\psi\right) & =\frac{b^{2}}{4\left(a+4\right)}.
\end{align*}

The $\varepsilon$-subdifferentials of $f$ and $g$ are 
\begin{align*}
\partial_{\varepsilon, \Phi} f\left(x_{\varepsilon}\right) & =\left\{ \phi\left(x\right)=cx:\left(x_{\varepsilon}-\frac{c-2}{2}\right)^{2}\leq\varepsilon,c\in\mathbb{R}\right\} ,\\
\partial_{\varepsilon, \Psi} g\left(x_{\varepsilon}\right) & =\left\{ \psi\left(x\right)=-ax^{2}+bx: \left(x_{\varepsilon}-\frac{b}{2\left(a+4\right)}\right)^{2}\leq \frac{\varepsilon}{a+4},a\geq0,b\in\mathbb{R}\right\} .
\end{align*}

We need to find $x_{\varepsilon}\in\mathbb{R}, \phi_\varepsilon \in\partial_{\varepsilon, \Phi}f\left(x_{\varepsilon}\right),\psi_\varepsilon \in\partial_{\varepsilon, \Psi} g\left(x_{\varepsilon}\right)$
such that 
\[
\begin{cases}
\phi_\varepsilon\left(x\right)+\psi_\varepsilon \left(x\right)\geq-\varepsilon & \forall x\in\mathbb{R}\\
\phi_\varepsilon\left(x_{\varepsilon}\right)+\psi_\varepsilon \left(x_{\varepsilon}\right)\leq\varepsilon
\end{cases}.
\]
The first inequality, for all $x\in \mathbb{R}$,
\[
\phi_\varepsilon\left(x\right)+\psi_\varepsilon \left(x\right)=-ax^{2}+\left(b+c\right)x\geq-\varepsilon,
\]
also implies $a=0,b+c=0$, so the second inequality is automatically
satisfied. Repeating the same steps as before, we achieve zero duality
gap at value $4/5$ which is the same in the previous case. However,
this time condition \eqref{eq:CP cond 1} does not hold for any $\psi\in\Psi$ and $0$,
because there is no $\phi\in\Phi$ such that
\[
\phi (x) = -\psi\left(x\right)=ax^{2}- bx,
\]
unless $a=0$.
\end{itemize}
\end{example}
In some cases, if Theorem \ref{thm:CP zero gap v2}-(i) is not satisfied, we need to check zero duality gap in a another way. Let us consider the following example.
\begin{example}
Let $X=Y=\mathbb{R}$, $f\left(x,y\right)=3x^{2}+2y^{2}$, $g\left(x\right)=-\left(x-1\right)^{2},L\left(x,y\right)=x-y$.
The classes of elementary functions are defined as
\begin{align*}
\Phi & =\left\{ \phi\left(x,y\right)=-a\left(x^{2}+y^{2}\right)+b_{1}x+b_{2}y,\ a\geq0,b_{1},b_{2}\in\mathbb{R}\right\} \\
\Psi & =\left\{ \psi\left(x\right)=-cx^{2}+dx,\ c\geq0,d\in\mathbb{R}\right\}, 
\end{align*}
(c.f. Remark \ref{rmk:const}).

We have
\begin{align*}
f^{*}_\Phi \left(\phi\right) & =\frac{b_{1}^{2}}{4\left(a+3\right)}+\frac{b_{2}^{2}}{4\left(a+2\right)},\\
\partial_{\varepsilon, \Phi} f\left(x_{\varepsilon},y_{\varepsilon}\right) & =\begin{cases}
\phi\left(x,y\right)=-a\left(x^{2}+y^{2}\right)+b_{1}x+b_{2}y,\\
\text{s.t. }\left(a+3\right)\left(x_{\varepsilon}-\frac{b_{1}}{2\left(a+3\right)}\right)^{2}+\left(a+2\right)\left(y_{\varepsilon}-\frac{b_{2}}{2\left(a+2\right)}\right)^{2}\leq\varepsilon
\end{cases},\\
g^{*}_\Psi\left(\psi\right) & =\begin{cases}
+\infty & 0\leq c<1\text{ or }c=1,d\neq2\\
1 & c=1,d=2\\
\frac{\left(d-2\right)^{2}}{4\left(c-1\right)} +1& c>1
\end{cases},\\
\partial_{\varepsilon, \Psi} g\left(x_{\varepsilon}\right) & =\begin{cases}
\psi\left(x\right)=-cx^{2}+dx & \left(c-1\right)\left(x_{\varepsilon}-\frac{d-2}{2\left(c-1\right)}\right)^{2}\leq\varepsilon,c>1\\
\psi\left(x\right)=-x^{2}+2x & c=1,d=2,\forall x_{\varepsilon}.
\end{cases}
\end{align*}

One option is to find, for $\varepsilon>0$, $x_\varepsilon,y_\varepsilon\in\mathbb{R},\phi_\varepsilon\in\partial_{\varepsilon, \Phi} f\left(x_\varepsilon,y_\varepsilon \right),\psi_\varepsilon\in\partial_{\varepsilon, \Psi} g\left(L\left(x_\varepsilon,y_\varepsilon\right)\right)$ such that 
\begin{equation}
\label{ex:2 eps system}
\begin{cases}
\phi_\varepsilon\left(x,y\right)+\psi_\varepsilon\left(L\left(x,y\right)\right)\geq-\varepsilon & \forall x,y\in\mathbb{R}\\
\phi_\varepsilon\left(x_{\varepsilon},y_{\varepsilon}\right)+\psi_\varepsilon \left(L\left(x_{\varepsilon},y_{\varepsilon}\right)\right)\leq\varepsilon.
\end{cases}
\end{equation}
 The first inequality is
\begin{align*}
\phi_\varepsilon \left(x,y\right)+\psi_\varepsilon \left(L\left(x,y\right)\right) & \geq-\varepsilon\\
-a\left(x^{2}+y^{2}\right)+b_{1}x+b_{2}y-c\left(x-y\right)^{2}+d\left(x-y\right)+\varepsilon & \geq0\\
-\left(a+c\right)\left(x^{2}+y^{2}\right)+2cxy+\left(b_{1}+d\right)x+\left(b_{2}-d\right)y+\varepsilon & \geq0,
\end{align*}
which gives us $a=c=0,b_{1}+d=b_{2}-d=0$, so the second inequality of \eqref{ex:2 eps system}
holds for any $x,y\in\mathbb{R}$. However, there are no element in $\partial_{\varepsilon, \Psi} g\left(L\left(x_{\varepsilon},y_{\varepsilon}\right)\right)$ such that $c=0$, and we cannot applied condition (i) of Corollary \ref{thm:CP zero gap v2} to determine zero duality gap.

One option is that instead of solving \eqref{ex:2 eps system}, we take $\psi_{\varepsilon}\in\partial_{\varepsilon, \Psi} g\left(L\left(x_{\varepsilon},y_{\varepsilon}\right)\right)$
and calculate $\partial_{\varepsilon, \Phi} \left(f+\psi_{\varepsilon}\circ L\right)\left(x_{\varepsilon},y_\varepsilon\right)$.
We consider the simplest case where $\psi_{\varepsilon}\left(x\right)=-x^{2}+2x$
and we calculate 
\[
\left(f+\psi_{\varepsilon}\circ L\right)^{*}_\Phi\left(\phi\right)=-\left(a+2\right)A^{2}-\left(a+1\right)B^{2}-2AB+\left(b_{1}-2\right)A+\left(b_{2}+2\right)B,
\]
where $A=\frac{ab_1-2a+b_1-b_2 -4}{2\left(a^2 +3a+1\right)},B=\frac{ab_2+2a-b_1+2b_2+6}{2\left(a^2+3a+1\right)}$,
for $a\geq0$. We examine whether $0\in\partial_{\varepsilon, \Phi}\left(f+\psi_{\varepsilon}\circ L\right)\left(x_{\varepsilon},y_{\varepsilon}\right)$
by checking the inequality 
\[
\left(f+\psi_{\varepsilon}\circ L\right)^{*}_\Phi\left(0\right)+\left(f+\psi_{\varepsilon}\circ L\right)\left(x_{\varepsilon},y_\varepsilon\right)\leq\varepsilon.
\]
 Substituting $\phi=0$ or $a=b_{1}=b_{2}=0$, we have 
\[
2x_{\varepsilon}^{2}+y_{\varepsilon}^{2}+2x_{\varepsilon}y_{\varepsilon}+2\left(x_{\varepsilon}-y_{\varepsilon}\right)+5\leq\varepsilon.
\]
This inequality has solution $x_{\varepsilon},y_{\varepsilon}$
as the left hand side is a parabola with minimum value $-\varepsilon$.
Thus, condition (i) of Theorem \ref{thm:CP zero gap} is satisfied, and we have zero duality gap for the conjugate duality.
\end{example}

Theorem \ref{thm:CP zero gap} is more general than Theorem \ref{thm:CP zero gap v2} but it requires more calculations than the latter. Sometimes, it is more convenient to calculate $\varepsilon$-subdifferential of $f$ and $g$ than $\varepsilon$-subdifferential of $(f+\psi_\varepsilon\circ L)$ where $\psi_\varepsilon\in\partial_{\varepsilon, \Psi} g(Lx_\varepsilon)$.

\section{Strong Duality for Conjugate Dual}
\label{sec:strong duality}

In this section, we investigate strong duality for the conjugate dual \eqref{prob:DCP}, i.e.,  we provide conditions for zero duality gap and the existence of solution to the dual problem \eqref{prob:DCP}; the respective conditions are expressed in terms of additivity of epigraphs of the conjugate functions.
\begin{definition}
The epigraph of a function $f:X\to (-\infty,+\infty]$ is a subset of $X\times\mathbb{R}$ defined as 
\begin{equation}
    \text{epi } f :=\left\{ (x,r) \in X\times \mathbb{R}: f(x) \leq r\right\}.
\end{equation}
\end{definition}
We have 
\begin{equation}
    \text{epi } f^*_\Phi :=\left\{ (\phi,r)\in\Phi\times\mathbb{R}: f^*_\Phi (\phi)\leq r\right\}.
\end{equation}
In consequence, if $(\phi,r)\in\text{epi } f^*_\Phi$, then $\phi\in\text{dom } f^*_\Phi$.
To investigate strong duality, together with \eqref{eq:CP cond 1},  we need the following condition: for a given $\phi\in\Phi$ and $\psi\in\Psi$
\begin{equation}
\label{eq:CP cond}
\phi + \psi\circ L \in \Phi.
\end{equation}

\begin{remark}
Note that \eqref{eq:CP cond} is satisfied when $\psi\circ L \in \text{lin } \Phi$ where $\text{lin } \Phi$ is the linear space generated by $\Phi$. 
\end{remark}

\begin{theorem}
\label{thm:epi-zero gap}
Let $X$ be a nonempty set and $Y$ be a vector space.
Let $f:X\to\left(-\infty,+\infty\right]$ and $g:Y\to\left(-\infty,+\infty\right]$
be proper functions. Let $\Phi$ and $\Psi$ be  sets of elementary functions defined on $X$ and $Y$, respectively.
%of $f$ and $g$.
Let $L:X\to Y$ be a mapping from $X$ to $Y$. Assume that $\text{dom }g\cap L\left(\text{dom }f\right)\neq\emptyset$ and consider the following conditions.
\begin{enumerate}
\item Every $(\phi,c_\phi)\in \text{epi } (f+g\circ L)^*_\Phi$, can be expressed as $(\phi,c_\phi)=(\varphi+\psi\circ L, c_\varphi+c_\psi)$, where $(\varphi,c_\varphi)\in \text{epi } f^*_\Phi$ and $(\psi,c_\psi)\in \text{epi }g^*_\Psi$.
\item For any $\phi\in\Phi$, it holds $\left(f+g\circ L\right)^{*}_\Phi\left(\phi\right)=\inf_{\psi\in\Psi}\left(f+\psi\circ L\right)^{*}_\Phi\left(\phi\right)+g^{*}_\Psi\left(\psi\right)$
and the infimum is attained.
\end{enumerate}
We have (i) $\Rightarrow$ (ii). Moreover, for any $\phi\in\Phi$, if $\bar{\psi}\in\Psi$ is a solution to the problem
\begin{equation}
\label{eq:CP general problem}
    \inf_{\psi\in\Psi}\left(f+\psi\circ L\right)^{*}_\Phi\left(\phi\right)+g^{*}_\Psi\left(\psi\right),
\end{equation}
and conditions \eqref{eq:CP cond}, \eqref{eq:CP cond 1} hold for $\bar{\psi}$ and $\phi$, then (ii) $\Rightarrow$ (i).
\end{theorem}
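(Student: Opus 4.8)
The backbone of both implications is the elementary identity that, for any $\psi\in\Psi$ and any $\phi\in\Phi$,
\[
\left(f+\psi\circ L\right)^{*}_\Phi(\phi)=\sup_{x\in X}\left\{(\phi-\psi\circ L)(x)-f(x)\right\},
\]
which, whenever $\phi-\psi\circ L\in\Phi$, collapses to $f^{*}_\Phi(\phi-\psi\circ L)$. I would combine this with the weak-duality inequality of Theorem \ref{thm:CP weak dual}, namely $\left(f+g\circ L\right)^{*}_\Phi(\phi)\leq\inf_{\psi\in\Psi}\left(f+\psi\circ L\right)^{*}_\Phi(\phi)+g^{*}_\Psi(\psi)$, so that in each direction only the reverse inequality (plus attainment) has to be produced.

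For (i) $\Rightarrow$ (ii), fix $\phi\in\Phi$ and assume first $\left(f+g\circ L\right)^{*}_\Phi(\phi)<+\infty$, so that $(\phi,c_\phi)$ with $c_\phi:=\left(f+g\circ L\right)^{*}_\Phi(\phi)$ lies in $\text{epi}\,(f+g\circ L)^{*}_\Phi$. I would apply (i) to split it as $(\phi,c_\phi)=(\varphi+\psi\circ L,\,c_\varphi+c_\psi)$ with $(\varphi,c_\varphi)\in\text{epi}\,f^{*}_\Phi$ and $(\psi,c_\psi)\in\text{epi}\,g^{*}_\Psi$. Since $\varphi\in\Phi$ is supplied directly by the epigraph membership, $\varphi=\phi-\psi\circ L\in\Phi$ and the identity above gives $\left(f+\psi\circ L\right)^{*}_\Phi(\phi)=f^{*}_\Phi(\varphi)$. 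Hence $\left(f+\psi\circ L\right)^{*}_\Phi(\phi)+g^{*}_\Psi(\psi)=f^{*}_\Phi(\varphi)+g^{*}_\Psi(\psi)\leq c_\varphi+c_\psi=c_\phi$, which is the reverse of weak duality and simultaneously shows that this very $\psi$ attains the infimum. The case $\left(f+g\circ L\right)^{*}_\Phi(\phi)=+\infty$ is handled by weak duality, which forces both sides to be $+\infty$.

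For (ii) $\Rightarrow$ (i), take an arbitrary $(\phi,c_\phi)\in\text{epi}\,(f+g\circ L)^{*}_\Phi$. By (ii) the infimum in \eqref{eq:CP general problem} is attained at some $\bar\psi\in\Psi$, so $\left(f+g\circ L\right)^{*}_\Phi(\phi)=\left(f+\bar\psi\circ L\right)^{*}_\Phi(\phi)+g^{*}_\Psi(\bar\psi)$. Invoking condition \eqref{eq:CP cond 1} for $\bar\psi$ and $\phi$, I set $\varphi:=\phi-\bar\psi\circ L\in\Phi$; the coupling identity then yields $\left(f+\bar\psi\circ L\right)^{*}_\Phi(\phi)=f^{*}_\Phi(\varphi)$, so that $\left(f+g\circ L\right)^{*}_\Phi(\phi)=f^{*}_\Phi(\varphi)+g^{*}_\Psi(\bar\psi)$. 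Because $f$ and $g$ are proper and $\text{dom}\,g\cap L(\text{dom}\,f)\neq\emptyset$, both $f^{*}_\Phi(\varphi)$ and $g^{*}_\Psi(\bar\psi)$ are finite, and I would then split the height by taking $c_\varphi:=f^{*}_\Phi(\varphi)$ and $c_\psi:=c_\phi-f^{*}_\Phi(\varphi)$. The membership $c_\phi\geq\left(f+g\circ L\right)^{*}_\Phi(\phi)=f^{*}_\Phi(\varphi)+g^{*}_\Psi(\bar\psi)$ guarantees $c_\psi\geq g^{*}_\Psi(\bar\psi)$, so $(\varphi,c_\varphi)\in\text{epi}\,f^{*}_\Phi$, $(\bar\psi,c_\psi)\in\text{epi}\,g^{*}_\Psi$, and $\varphi+\bar\psi\circ L=\phi$, $c_\varphi+c_\psi=c_\phi$ give exactly the decomposition demanded by (i); condition \eqref{eq:CP cond} enters only to keep the reconstructed element $\varphi+\bar\psi\circ L$ inside $\Phi$ when one instead starts from a prescribed $\varphi$.

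The delicate points are, first, the algebraic identification $\left(f+\psi\circ L\right)^{*}_\Phi(\phi)=f^{*}_\Phi(\phi-\psi\circ L)$, which is only legitimate once $\phi-\psi\circ L$ is guaranteed to be an elementary function — this is precisely the purpose of \eqref{eq:CP cond 1} and is where the non-symmetric interplay between $\Phi$, $\Psi$ and $L$ matters; and second, the distribution of the epigraph height $c_\phi$ into $c_\varphi+c_\psi$, which requires knowing that the two conjugate values are finite so that the slack in $c_\phi$ can be assigned unambiguously. I expect the finiteness and attainment bookkeeping, rather than the inequalities themselves, to be the part most in need of care.
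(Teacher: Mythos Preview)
Your proposal is correct and follows essentially the same route as the paper: in both directions the engine is the identity $(f+\psi\circ L)^{*}_\Phi(\phi)=f^{*}_\Phi(\phi-\psi\circ L)$ once $\phi-\psi\circ L\in\Phi$, combined with the weak-duality inequality of Theorem~\ref{thm:CP weak dual}. Your height-splitting in (ii)$\Rightarrow$(i) is the mirror image of the paper's (you assign the slack $c_\phi-(f^{*}_\Phi(\varphi)+g^{*}_\Psi(\bar\psi))$ to $c_\psi$, the paper assigns it to $c_\varphi$), which is immaterial.

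Your closing remark about \eqref{eq:CP cond} is also accurate: the paper's proof of (ii)$\Rightarrow$(i) contains an additional preliminary paragraph that uses \eqref{eq:CP cond} to show that a sum $(\phi+\bar\psi\circ L,\,c_f+c_g)$ built from $(\phi,c_f)\in\text{epi}\,f^{*}_\Phi$ and $(\bar\psi,c_g)\in\text{epi}\,g^{*}_\Psi$ lands back in $\text{epi}\,(f+g\circ L)^{*}_\Phi$. But condition~(i) as stated asks only for the decomposition of elements of $\text{epi}\,(f+g\circ L)^{*}_\Phi$, and for that part \eqref{eq:CP cond 1} alone suffices, exactly as you observe.
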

\begin{proof}
(i) $\Rightarrow$ (ii): 
If $\phi\notin \text{dom } (f+g\circ L)^*_\Phi$, then $(f+g\circ L)^*_\Phi (\phi) = +\infty$ and (ii) holds.
Consider the case $\phi\in\text{dom } (f+g\circ L)^*_\Phi$. By Theorem \ref{thm:CP weak dual}, for every $\phi\in\Phi$, we have
\begin{equation}
\label{eq:weak dual epi1}
\left(f+g\circ L\right)^{*}_\Phi\left(\phi\right)\leq \inf_{\psi\in\Psi}\left(f+\psi\circ L\right)^{*}_\Phi\left(\phi\right)+g^{*}_\Psi\left(\psi\right).
\end{equation}
We start by  proving the opposite inequality. Since
$\left(\phi,\left(f+g\circ L\right)^{*}_\Phi\left(\phi\right)\right)\in\text{epi }\left(f+g\circ L\right)^{*}_\Phi$,
by (i), there exist $\left(\varphi,c_{f}\right)\in\text{epi }f^{*}_\Phi $ and $\left(\psi_0,c_{g}\right)\in\text{epi } g^{*}_\Psi$ such that 
\begin{align*}
\phi\left(x\right) & =\varphi\left(x\right)+\psi_0\left(Lx\right),\quad\forall x\in X\\
\left(f+g\circ L\right)^{*}_\Phi\left(\phi\right) & =c_f+c_g.
\end{align*}
Thus
\begin{align*}
\left(f+g\circ L\right)^{*}_\Phi\left(\phi\right) & =c_f+c_g\\
 & \geq f^{*}_\Phi \left(\varphi\right)+g^{*}_\Psi\left(\psi_0\right)\\
 & =\sup_{x\in X}\left\{ \varphi\left(x\right)-f\left(x\right)\right\} +g^{*}_\Psi\left(\psi_0\right)\\
 & =\sup_{x\in X}\left\{ \phi\left(x\right)-\psi_0\left(Lx\right)-f\left(x\right)\right\} +g^{*}_\Psi\left(\psi_0\right)\\
 & =\left(f+\psi_0\circ L\right)^{*}_\Phi\left(\phi\right)+g^{*}_\Psi\left(\psi_0\right)\\
 & \geq\inf_{\psi\in\Psi}\left(f+\psi\circ L\right)^{*}_\Phi\left(\phi\right)+g^{*}_\Psi\left(\psi\right),
\end{align*}
and we have (ii). The infimum is attained from the above inequality, as 
\begin{align*}
+\infty > c_f +c_g =\left(f+g\circ L\right)^{*}_\Phi\left(\phi\right) &\geq \left(f+\psi_0\circ L\right)^{*}_\Phi\left(\phi\right)+g^{*}_\Psi\left(\psi_0\right)\\
& \geq\inf_{\psi\in\Psi}\left(f+\psi\circ L\right)^{*}_\Phi\left(\phi\right)+g^{*}_\Psi\left(\psi\right) \\
& \geq \left(f+g\circ L\right)^{*}_\Phi\left(\phi\right),
\end{align*}
so $\psi_0\in\Psi$ solves problem \eqref{eq:CP general problem}.

(ii) $\Rightarrow$ (i): From (ii), let $(\phi,c_f)\in\text{epi } f^*_\Phi$ and $\bar{\psi}$ be a solution to the problem \eqref{eq:CP general problem} for $\phi\in\Phi$. As $\left( f+\bar{\psi} \circ L\right)^*_\Phi (\phi) +g^*_\Psi (\bar{\psi}) <+\infty$, we can find $c_g\in\mathbb{R}$ such that $(\bar{\psi},c_g)\in \text{epi } g^*_\Psi$. Moreover, from \eqref{eq:CP cond}, $\phi + \bar{\psi} \circ L :=\varphi\in\Phi$, we have
\begin{align*}
c_f+c_g & \geq f^{*}_\Phi \left(\phi\right)+g^{*}_\Psi\left(\bar{\psi}\right)\\
 & =\sup_{x\in X} \left\{\varphi\left(x\right)-\bar{\psi} (Lx)-f\left(x\right) \right\}+g^{*}_\Psi\left(\bar{\psi}\right)\\
 & = (f+\bar{\psi}\circ L)^*_\Phi (\varphi) + g^*_\Psi (\bar{\psi})\\
 & \geq \left(f+g\circ L\right)^*_\Phi (\varphi).
\end{align*}
%where in the last inequality, we use Theorem \ref{thm:CP weak dual} for $\varphi\in\Phi$. 
This means $\left(\varphi,c_f+c_g\right) \in \text{epi } (f+g\circ L)^*_\Phi$. 

On the other hand, let $\left(\phi,r\right)\in\text{epi }\left(f+g\circ L\right)^{*}_\Phi$.
We want to prove that there exist $(\varphi,c_f)\in \text{epi } f^*_\Phi$ and $(\psi,c_g)\in \text{epi }g^*_\Psi$ such that 
\[
(\phi,r) = (\varphi+\psi\circ L, c_f + c_g).
\]
From (ii), for $\phi\in\Phi$, there exists $\bar{\psi}\in\Psi$ such that $\bar{\psi}$ is the solution to the problem \eqref{eq:CP general problem} i.e.
\[
\left(f+g\circ L\right)^{*}_\Phi\left(\phi\right) = \left(f+\bar{\psi}\circ L\right)^{*}_\Phi\left(\phi\right)+g^{*}_\Psi\left(\bar{\psi} \right).
\]
Therefore, we have
\begin{align*}
r & \geq\left(f+g\circ L\right)^{*}_\Phi\left(\phi\right)\\
r & \geq\left(f+\bar{\psi} \circ L\right)^{*}_\Phi\left(\phi\right)+g^{*}_\Psi\left(\bar{\psi} \right)\\
r-g^{*}_\Psi\left(\bar{\psi}\right) & \geq\sup_{x\in X} \left\{\phi\left(x\right)-\bar{\psi}\left(Lx\right)-f\left(x\right) \right\}.
\end{align*}
Condition \eqref{eq:CP cond 1} gives us $\varphi:=\phi-\bar{\psi}\circ L \in \Phi$. Then
\begin{align*}
r-g^{*}_\Psi\left(\bar{\psi}\right) & \geq\sup_{x\in X} \left\{\varphi\left(x\right)-f\left(x\right)\right\}= f^{*}_\Phi \left(\varphi\right).
\end{align*}
Therefore, $\left(\varphi,r-g^{*}_\Psi\left(\bar{\psi}\right)\right)\in\text{epi }f^{*}_\Phi$ and $\left(\bar{\psi},g^{*}_\Psi\left(\bar{\psi}\right)\right)\in\text{epi }g^{*}_\Psi$,
which means we can decompose $\left(\phi,r\right)$ into $\left(\varphi,r-g^{*}_\Psi\left(\bar{\psi}\right)\right)$ and $\left(\bar{\psi},g^{*}_\Psi\left(\bar{\psi}\right)\right)$. Hence, the proof is finished.
\end{proof}

\begin{remark}
\
\begin{itemize}
% \item Theorem \ref{thm:epi-zero gap} is different to \cite[Corollary 5.1]{Jey2007} because we have conditions \eqref{eq:CP cond}, \eqref{eq:CP cond 1}, using $\left(\psi\circ L, c_g\right)$ instead of using $\left(\psi, c_g\right)\in \text{epi } g^*_\Psi$. While in \cite{Bui2021,Jey2007}, 
%they define the 
%structure of $\Phi$ by using 
% weak star topology of $\Phi$ is used. We can also apply the topological approach, note
%define the same topology. 
% however, that the set 
%\[
%\left\{(\psi\circ L, c): (\psi,c)\in \text{epi } g^*_\Psi \right\}
%\]
%is not in $\Phi\times\mathbb{R}$, as $\psi\circ L \notin \Phi$.
\item Observe that condition (ii) of Theorem 5.2 implies the strong duality relationship between \eqref{prob:CP} and \eqref{prob:DCP}, i.e. zero duality gap holds 
\[-val (CP)=\left(f+g\circ L\right)^{*}_\Phi\left(0\right)=\inf_{\psi\in\Psi}\left(f+\psi\circ L\right)^{*}_\Phi\left(0\right)+g^{*}_\Psi \left(\psi\right) = - val (DCP),
\]
and the dual problem \eqref{prob:DCP} is solvable.
\item The assumption (i) in Theorem \ref{thm:epi-zero gap} does not coincide with the following
\[
\text{epi } (f+g\circ L)_\Phi^* = \text{epi } f_\Phi^* +\text{epi } (g\circ L)_\Phi^*,
\]
(see also \cite[Theorem 3.1]{Jey2007}), because we consider the set $B:=\left\{(\psi\circ L,r): (\psi,r)\in \text{epi } g_\Psi^*\right\}$ instead of $\text{epi } (g\circ L)_\Phi^*$, where $(\psi,r)\in\text{epi }g_\Psi^*$. Then the  assumption (i) of Theorem \ref{thm:epi-zero gap} is
\[
\text{epi } (f+g\circ L)_\Phi^* = \text{epi } f_\Phi^* +B,
\]
whenever $\phi +\psi\circ L\in\Phi$ holds true for any $\psi\in\text{dom }g^*_\Psi,\phi\in\text{dom }f^*_\Phi$. Examples of classes of elementary functions, where condition $\phi+\psi\circ L\in\Phi$ is satisfied, can be found in Example \ref{ex:3.2}-3.

\end{itemize}
\end{remark}

% We can further our result from Theorem \ref{thm:epi-zero gap}, but it requires a stronger conditions than \eqref{eq:CP cond}. 
\begin{corollary}
\label{cor:epi zero gap}
Let $X$ be a nonempty set and $Y$ be a vector space. Let $f:X\to\left(-\infty,+\infty\right],\ g:Y\to\left(-\infty,+\infty\right]$
and $L:X\to Y$ be a mapping from $X$ to $Y$. Let $\Phi,\Psi$
be sets of elementary functions on $X$ and $Y$, respectively. Assume that $\text{dom } g\cap L\left(\text{dom }f\right)\neq\emptyset$. Let
\begin{enumerate}
\item $\text{epi }\left(f+g\circ L\right)^{*}_\Phi=\text{epi }f^{*}_\Phi +\text{epi }\left(g\circ L\right)^{*}_\Phi$,
\item For $\phi\in \Phi$, $\inf_{\psi\in\Psi}\left(f+\psi\circ L\right)^{*}_\Phi\left(\phi\right)+g^{*}_\Psi\left(\psi\right)=\left(f+g\circ L\right)^{*}_\Phi\left(\phi\right)$ and the infimum is attained.
\end{enumerate}
Consider the following conditions.

\renewcommand{\labelenumi}{\alph{enumi}.}
\begin{enumerate}
\item For any pair $(\phi,r) \in \text{epi } (g\circ L)^*_\Phi$, there exists $\psi\in\Psi$ such that $\phi \leq \psi\circ L$ and $g^*_\Psi (\psi) \leq r$ 
\item There exists $\psi\in\Psi$ such that $g^*_\Psi (\psi) \leq 0$ and $g\circ L \leq \psi\circ L$.
\item For any pair $(\phi,r)\in \text{epi } \left( f+g\circ L\right)^*_\Phi$ and a given $\psi\in\Psi$, there exist $\phi_1,\phi_2\in\Phi$ such that $\phi-\psi\circ L \geq \phi_1$ and $\psi\circ L \geq \phi_2$
\item $\Phi-\Phi\subset\Phi$, for any $\varepsilon_{1},\varepsilon_{2}>0$ and a given $\psi\in \Psi$,
there exist $x_{\varepsilon_2}\in X,\phi_{\varepsilon_2}\in\partial_{\varepsilon_{2}, \Phi}\left(g\circ L\right)\left(x_{\varepsilon_2}\right)$
such that 
\[
\psi\left(Lx_{\varepsilon_2}\right)-\phi_{\varepsilon_2}\left(x_{\varepsilon_2}\right)+\varepsilon_{1}>\sup_{x\in X}\psi\left(Lx\right)-\phi_{\varepsilon_2}\left(x\right).
\]
\end{enumerate}
If $\left(a\right)$ or $\left(b\right)$ hold, then (i) $\Rightarrow$ (ii).
If $\left(c\right)$ or $\left(d\right)$ hold for $\bar{\psi}\in\Psi$ at which the infimum in (ii) is attained, then (ii) $\Rightarrow$ (i).
\end{corollary}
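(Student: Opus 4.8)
The plan is to read the corollary as a translation of Theorem \ref{thm:epi-zero gap}, whose working hypothesis (via the remark following it) is phrased in terms of the pushforward set $B:=\{(\psi\circ L,r):(\psi,r)\in\text{epi }g^*_\Psi\}$, rather than in terms of $\text{epi }(g\circ L)^*_\Phi$. The single computation underlying everything is that, for $\psi\in\Psi$ with $\psi\circ L\in\Phi$,
\[
(g\circ L)^*_\Phi(\psi\circ L)=\sup_{x\in X}\{\psi(Lx)-g(Lx)\}\le\sup_{y\in Y}\{\psi(y)-g(y)\}=g^*_\Psi(\psi),
\]
so that $B\subseteq\text{epi }(g\circ L)^*_\Phi$. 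Hence condition (i) of the corollary differs from the decomposition hypothesis of Theorem \ref{thm:epi-zero gap} only by enlarging $B$ to $\text{epi }(g\circ L)^*_\Phi$, and the four auxiliary conditions are precisely the devices that let one pass between the two sets in each direction. Both implications I would then prove directly, keeping the dual variable on the $\Psi$-side wherever possible.

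For (i)$\Rightarrow$(ii) I would avoid Theorem \ref{thm:epi-zero gap} altogether. Weak duality (Theorem \ref{thm:CP weak dual}) already gives $(f+g\circ L)^*_\Phi(\phi)\le\inf_{\psi}\{(f+\psi\circ L)^*_\Phi(\phi)+g^*_\Psi(\psi)\}$, so only the reverse inequality and attainment remain; the case $(f+g\circ L)^*_\Phi(\phi)=+\infty$ is immediate, so assume it finite and decompose $(\phi,(f+g\circ L)^*_\Phi(\phi))$ by (i) as $(\varphi,c_f)+(\eta,c_g)$ with $(\varphi,c_f)\in\text{epi }f^*_\Phi$ and $(\eta,c_g)\in\text{epi }(g\circ L)^*_\Phi$. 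Under (a) choose $\psi\in\Psi$ with $\eta\le\psi\circ L$ and $g^*_\Psi(\psi)\le c_g$; then $\eta-\psi\circ L\le0$ gives $(f+\psi\circ L)^*_\Phi(\phi)=\sup_x\{\varphi(x)+\eta(x)-\psi(Lx)-f(x)\}\le f^*_\Phi(\varphi)\le c_f$, so the dual value at $\psi$ is $\le c_f+c_g=(f+g\circ L)^*_\Phi(\phi)$, forcing equality and attainment at $\psi$. Under (b) the chosen $\psi_0$ has $g^*_\Psi(\psi_0)\le0$, i.e.\ $\psi_0\le g$ on $Y$, hence $\psi_0\circ L\le g\circ L$; combined with $g\circ L\le\psi_0\circ L$ this yields $\psi_0\circ L=g\circ L$, whence $(f+\psi_0\circ L)^*_\Phi(\phi)=(f+g\circ L)^*_\Phi(\phi)$ and substituting $\psi_0$ into the dual gives (ii) at once. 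Note that neither case requires $\psi\circ L\in\Phi$.

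For (ii)$\Rightarrow$(i) the inclusion $\text{epi }f^*_\Phi+\text{epi }(g\circ L)^*_\Phi\subseteq\text{epi }(f+g\circ L)^*_\Phi$ is the elementary inequality $(f+g\circ L)^*_\Phi(\alpha+\beta)\le f^*_\Phi(\alpha)+(g\circ L)^*_\Phi(\beta)$, valid whenever $\alpha+\beta\in\Phi$. For the reverse inclusion take $(\phi,r)\in\text{epi }(f+g\circ L)^*_\Phi$ and let $\bar\psi$ attain the infimum in (ii), so $(f+\bar\psi\circ L)^*_\Phi(\phi)+g^*_\Psi(\bar\psi)=(f+g\circ L)^*_\Phi(\phi)\le r$. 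The natural split is $\beta:=\bar\psi\circ L,\ \alpha:=\phi-\bar\psi\circ L$, for which $f^*_\Phi(\alpha)=(f+\bar\psi\circ L)^*_\Phi(\phi)$ and $(g\circ L)^*_\Phi(\beta)\le g^*_\Psi(\bar\psi)$, so the two conjugate values sum to at most $r$; the only missing point is $\alpha,\beta\in\Phi$. Condition (c) is the minorisation hypothesis guaranteeing that the functions entering the split admit $\Phi$-minorants (hence proper, finite conjugates), securing the exact decomposition; condition (d) instead produces an approximate one: taking $\phi_{\varepsilon_2}\in\partial_{\varepsilon_2,\Phi}(g\circ L)(x_{\varepsilon_2})\subseteq\Phi$ as $\beta$ and $\phi-\phi_{\varepsilon_2}\in\Phi$ (using $\Phi-\Phi\subseteq\Phi$) as $\alpha$, the Fenchel inequality \eqref{prop1:Fenchel} gives $(g\circ L)^*_\Phi(\phi_{\varepsilon_2})\le\phi_{\varepsilon_2}(x_{\varepsilon_2})-g(Lx_{\varepsilon_2})+\varepsilon_2$, while the displayed inequality of (d) with $\psi=\bar\psi$ gives $f^*_\Phi(\phi-\phi_{\varepsilon_2})\le(f+\bar\psi\circ L)^*_\Phi(\phi)+\bar\psi(Lx_{\varepsilon_2})-\phi_{\varepsilon_2}(x_{\varepsilon_2})+\varepsilon_1$; adding these the terms $\pm\phi_{\varepsilon_2}(x_{\varepsilon_2})$ cancel, and $\bar\psi(Lx_{\varepsilon_2})-g(Lx_{\varepsilon_2})\le g^*_\Psi(\bar\psi)$ yields a total of at most $r+\varepsilon_1+\varepsilon_2$, so that $\varepsilon_1,\varepsilon_2\to0$ together with closedness of the epigraph sum recovers $(\phi,r)\in\text{epi }f^*_\Phi+\text{epi }(g\circ L)^*_\Phi$.

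The one genuine obstacle throughout is that $\bar\psi$ lives on $Y$ and its pullback $\bar\psi\circ L$ need not belong to $\Phi$; every auxiliary hypothesis exists to circumvent this. In the forward direction (a) and (b) sidestep it entirely, since $\psi$ is only ever compared with $\eta$ on the $\Phi$-side. In the backward direction the difficulty is unavoidable, and I expect the delicate point to be the limiting step under (d): one must control the two conjugates \emph{simultaneously} as $\varepsilon_1,\varepsilon_2\to0$ and invoke closedness of $\text{epi }f^*_\Phi+\text{epi }(g\circ L)^*_\Phi$ to promote the approximate decompositions to an exact one, and, for the alternative route (c), to verify that the $\Phi$-minorants genuinely upgrade to the membership $\alpha,\beta\in\Phi$ that the split requires.
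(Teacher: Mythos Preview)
Your overall architecture matches the paper's proof: weak duality plus a decomposition of $(\phi,(f+g\circ L)^*_\Phi(\phi))$ via (i) for the forward direction, and the attained $\bar\psi$ together with the $\Phi$-side splitting for the reverse. A few points of comparison and one genuine gap:

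\textbf{Case (b).} Your observation that $g^*_\Psi(\psi_0)\le 0$ gives $\psi_0\le g$ on $Y$, so together with $g\circ L\le\psi_0\circ L$ one gets $\psi_0\circ L=g\circ L$ and hence (ii) directly, is correct and strictly sharper than what the paper does. The paper instead keeps the decomposition $\phi=\phi_1+\phi_2$ from (i), bounds $(f+\psi_0\circ L)^*_\Phi(\phi)\le f^*_\Phi(\phi_1)+\sup_x\{\phi_2(x)-\psi_0(Lx)\}$, and then uses $g\circ L\le\psi_0\circ L$ only to replace $\psi_0\circ L$ by $g\circ L$ inside that second supremum. Your route bypasses (i) entirely; both are valid for the stated implication.

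\textbf{Case (c).} Here your proposal is too vague to count as a proof. You write that (c) ``secures the exact decomposition'' because the minorants make the conjugates finite, but that is not the mechanism. In the paper the minorants $\phi_1,\phi_2\in\Phi$ from (c) are themselves taken as the two summands: from $\phi-\bar\psi\circ L\ge\phi_1$ one gets $f^*_\Phi(\phi_1)\le(f+\bar\psi\circ L)^*_\Phi(\phi)$, and from $\bar\psi\circ L\ge\phi_2$ one gets $(g\circ L)^*_\Phi(\phi_2)\le g^*_\Psi(\bar\psi)$, so the two conjugates sum to at most $r$ and $(\phi_1,r-(g\circ L)^*_\Phi(\phi_2))\in\text{epi }f^*_\Phi$, $(\phi_2,(g\circ L)^*_\Phi(\phi_2))\in\text{epi }(g\circ L)^*_\Phi$. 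You should spell this out; your ``natural split'' $\alpha=\phi-\bar\psi\circ L$, $\beta=\bar\psi\circ L$ is not what is used, and your text does not explain how one gets from minorants to an exact additive decomposition of $\phi$.

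\textbf{Case (d).} Your computation is exactly the paper's, and you correctly isolate the delicate point: one only obtains $(\phi,r+\varepsilon_1+\varepsilon_2)\in\text{epi }f^*_\Phi+\text{epi }(g\circ L)^*_\Phi$ for all $\varepsilon_1,\varepsilon_2>0$. You propose to invoke closedness of the epigraph sum to pass to the limit, but no such hypothesis is available. The paper does not resolve this either: it simply records the pair $(\varphi_{\varepsilon_2},r+\varepsilon_1+\varepsilon_2-(g\circ L)^*_\Phi(\phi_{\varepsilon_2}))\in\text{epi }f^*_\Phi$ and $(\phi_{\varepsilon_2},(g\circ L)^*_\Phi(\phi_{\varepsilon_2}))\in\text{epi }(g\circ L)^*_\Phi$ and asserts the inclusion without a limiting step. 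So your concern is legitimate, but be aware that the paper's argument stops at the same approximate statement.
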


\begin{proof}
(i)$\Rightarrow$ (ii): For any
$\phi\in\Phi,\left(\phi,\left(f+g\circ L\right)^{*}_\Phi\left(\phi\right)\right)\in\text{epi }\left(f+g\circ L\right)^{*}_\Phi$.
There exist two pairs $\left(\phi_{1},c_{1}\right)\in\text{epi }f^{*}_\Phi$
and $\left(\phi_{2},c_{2}\right)\in\text{epi }\left(g\circ L\right)^{*}_\Phi$
such that $\phi=\phi_{1}+\phi_{2}$ and $c_{1}+c_{2}=\left(f+g\circ L\right)^*_\Phi \left(\phi\right)$.
Thanks to Theorem \ref{thm:CP weak dual}, for any $\phi\in\Phi$, we have 
\[
\inf_{\psi\in\Psi}\left(f+\psi\circ L\right)^{*}_\Phi\left(\phi\right)+g^{*}_\Psi\left(\psi\right)\geq\left(f+g\circ L\right)^{*}_\Phi\left(\phi\right),
\]
so we need to prove the reverse inequality. Consider 
\begin{equation}
\label{eq:cor CP-Phi only zero gap}
\inf_{\psi\in\Psi}\left(f+\psi\circ L\right)^{*}_\Phi\left(\phi\right)+g^{*}_\Psi\left(\psi\right)  \leq\left(f+\psi\circ L\right)^{*}_\Phi\left(\phi\right)+g^{*}_\Psi\left(\psi\right)
\end{equation}
Assume $\left(a\right)$. We can find $\psi_0\in\Psi$ such that $\phi_{2}\leq \psi_0\circ L$ and $g^*_\Psi (\psi_0) \leq c_2$. Note that 
\[
\phi_1 = \phi - \phi_2 \geq \phi - \psi_0\circ L.
\]
Hence,
\begin{align}
\inf_{\psi\in\Psi}\left(f+\psi\circ L\right)^{*}_\Phi\left(\phi\right)+g^{*}_\Psi\left(\psi\right) & \leq\left(f+\psi_0\circ L\right)^{*}_\Phi\left(\phi\right)+g^{*}_\Psi\left(\psi_0\right)\notag\\
 & \leq \sup_{x\in X}\left\{ \phi\left(x\right)-f\left(x\right)-\psi_0\left(Lx\right)\right\} +c_2\notag\\
 & \leq\sup_{x\in X}\left\{ \phi_{1}\left(x\right)-f\left(x\right)\right\} +c_2\notag\\
 & = f^{*}_\Phi \left(\phi_{1}\right)+c_2 \notag\\
 \label{eq:cor CP-Phi zero gap a}
 & \leq c_{1}+c_{2}=\left(f+g\circ L\right)^{*}_\Phi\left(\phi\right).
\end{align}
Inequality \eqref{eq:cor CP-Phi zero gap a} gives us
\begin{equation*}
(f+g\circ L)^*_\Phi (\phi) \leq\left(f+\psi_0\circ L\right)^{*}_\Phi\left(\phi\right)+g^{*}_\Psi\left(\psi_0\right) \leq (f+g\circ L)^*_\Phi (\phi) <+\infty,
\end{equation*}
and the infimum in (ii) is attained at $\psi_0$.

Now let $\left(b\right)$ hold. There exists $\psi_0\in\Psi$ such that $g\circ L \leq \psi_0\circ L$ and $g^*_\Psi (\psi_0)\leq 0$. We have
\begin{align}
\inf_{\psi\in\Psi}\left(f+\psi\circ L\right)^{*}_\Phi\left(\phi\right)+g^{*}_\Psi\left(\psi\right) & \leq\left(f+\psi_0\circ L\right)^{*}_\Phi\left(\phi\right)+g^{*}_\Psi\left(\psi_0\right)\notag\\
 & \leq\sup_{x\in X}\left\{ \phi_1\left(x\right)-f\left(x\right)\right\} +\sup_{x\in X}\left\{ \phi_2 (x)-\psi_0\left(Lx\right)\right\} \notag\\
 & \leq f^*_\Phi (\phi_1)+\sup_{x\in X}\left\{ \phi_2 (x)-g \left(Lx\right)\right\} \notag\\
 & \leq f^{*}_\Phi \left(\phi_{1}\right)+(g\circ L)^*_\Phi (\phi_2) \notag\\
 & \leq c_{1}+c_{2}=\left(f+g\circ L\right)^{*}_\Phi\left(\phi\right),
\end{align}
where we have used $\phi = \phi_1 +\phi_2$ in the second inequality. The attainment of the infimum of (ii) at $\psi_0\in\Psi$ follows in the same way as in the proof with condition (a).

(ii)$\Rightarrow $(i): Let $\left(\phi_{1},c_{1}\right)\in\text{epi }f^{*}_\Phi,\left(\phi_{2},c_{2}\right)\in\text{epi }\left(g\circ L\right)^{*}_\Phi$
we have 
\[
c_{1}+c_{2}\geq f^{*}_\Phi \left(\phi_{1}\right)+\left(g\circ L\right)^{*}_\Phi\left(\phi_{2}\right)\geq\left(f+g\circ L\right)^{*}_\Phi\left(\phi_{1}+\phi_{2}\right),
\]
so $\left(\phi_{1}+\phi_{2},c_{1}+c_{2}\right)\in\text{epi }\left(f+g\circ L\right)^{*}_\Phi$,
which means 
\[
\text{epi }\left(f+g\circ L\right)^{*}_\Phi\supset\text{epi }f^{*}_\Phi+\text{epi }\left(g\circ L\right)^{*}_\Phi.
\]
We want to prove the reverse inclusion. Take $\left(\phi,r\right)\in\text{epi }\left(f+g\circ L\right)^{*}_\Phi$.
From (ii), there exists $\bar{\psi}\in\Psi$ which is a solution to the problem,
\begin{align}
\inf_{\psi\in\Psi}\left(f+\psi\circ L\right)^{*}_\Phi\left(\phi\right)+g^{*}_\Psi\left(\psi\right) & =
\left(f+\bar{\psi}\circ L\right)^{*}_\Phi\left(\phi\right)+g^{*}_\Psi\left(\bar{\psi}\right)\notag\\
\label{eq:cor CP phi only zero gap 3}
& =\left(f+g\circ L\right)^{*}_\Phi\left(\phi\right)\leq r.
\end{align}

We assume $\left(c\right)$ holds. There exist $\phi_1,\phi_2\in\Phi$ such that $\phi\geq \phi_1 +\bar{\psi}\circ L$, $\bar{\psi}\circ L \geq \phi_2$, so that $(g\circ L)^*_\Phi (\phi_2) \leq g^*_\Psi (\bar{\psi})$ and
\begin{equation}
\label{eq:cor CP phi only zero gap 3-1}
\left(f+\bar{\psi}\circ L\right)^{*}_\Phi\left(\phi\right)=\sup_{x\in x}\phi\left(x\right)-\bar{\psi}\left(Lx\right)-f\left(x\right)\geq \sup_{x\in x}\phi_1 \left(x\right)-f\left(x\right)=f^*_\Phi (\phi_1).
\end{equation}
Combining this with \eqref{eq:cor CP phi only zero gap 3} and \eqref{eq:cor CP phi only zero gap 3-1}, we obtain
\[
f^{*}_\Phi \left(\phi_1\right)+(g\circ L)^{*}_\Phi \left(\phi_2\right) \leq \left(f+\bar{\psi}\circ L\right)^{*}_\Phi\left(\phi\right)+g^*_\Psi \left(\bar{\psi}\right)\leq r.
\]

By taking $\left(\phi_2,\left(g\circ L\right)^{*}_\Phi\left(\phi_2\right)\right)\in\text{epi }\left(g\circ L\right)^{*}_\Phi$,
we can make $\left(\phi_1,r-\left(g\circ L\right)^{*}_\Phi\left(\phi_2\right)\right)\in\text{epi }f^{*}_\Phi$.
This means 
\[
\left(\phi_2,\left(g\circ L\right)^{*}_\Phi\left(\phi_2\right)\right)+\left(\phi_1,r-\left(g\circ L\right)^{*}_\Phi\left(\phi_2\right)\right)=\left(\phi,r\right).
\]
Thus, we have
\begin{equation}
\label{eq:cor CP phi only epigraph}
\text{epi }\left(f+g\circ L\right)^{*}_\Phi\subset\text{epi }f^{*}_\Phi +\text{epi }\left(g\circ L\right)^{*}_\Phi.
\end{equation}

Now, let $\left(d\right)$ hold. Let $\bar{\psi}\in \Psi$ be a solution to the infimum problem in (ii). For every $\varepsilon_1,\varepsilon_2 >0$, there exist $x_{\varepsilon_2}\in X$, $\phi_{\varepsilon_2}\in\partial_{\varepsilon_{2}, \Phi} \left(g\circ L\right)\left(x_{\varepsilon_2}\right)$  such that
\[
-\sup_{x\in X}\left\{ \bar{\psi}\left(Lx\right) -\phi_{\varepsilon_2}\left(x\right)\right\} >-\bar{\psi}\left(Lx_{\varepsilon_2}\right)+\phi_{\varepsilon_2}\left(x_{\varepsilon_2}\right)-\varepsilon_{1}.
\]
We have
\begin{align}
\left(f+\bar{\psi}\circ L\right)^{*}_\Phi\left(\phi\right) & =\sup_{x\in X}\left\{ \phi\left(x\right)-f\left(x\right)-\bar{\psi}\left(Lx\right)\right\} \notag\\
\label{eq:cor CP phi only f+psi}
& \geq\sup_{x\in X}\left\{ \phi\left(x\right)-\phi_{\varepsilon_2}\left(x\right)-f\left(x\right)\right\} -\sup_{x\in X}\left\{ \bar{\psi}\left(Lx\right)-\phi_{\varepsilon_2}\left(x\right)\right\}
\end{align}

As $\phi-\phi_{\varepsilon_2} :=\varphi_{\varepsilon_2}\in\Phi$, we can write $\sup_{x\in X}\left\{ \phi\left(x\right)-\phi_{\varepsilon_2}\left(x\right)-f\left(x\right)\right\} =f^{*}_\Phi \left(\varphi_{\varepsilon_2}\right)$.
Since $g^{*}_\Psi\left(\bar{\psi}\right)\geq\bar{\psi} \left(y\right)-g\left(y\right)$ for any $y\in Y$, we can set $y=Lx_{\varepsilon_2}$. Combining this with \eqref{eq:cor CP phi only f+psi}
gives us
\begin{align*}
\left(f+\bar{\psi}\circ L\right)^{*}_\Phi\left(\phi\right)+g^{*}_\Psi\left(\bar{\psi}\right) & >f^{*}_\Phi \left(\varphi_{\varepsilon_2}\right)-\bar{\psi}\left(Lx_{\varepsilon_2}\right)+\phi_{\varepsilon_2}\left(x_{\varepsilon_2}\right)-\varepsilon_{1}+\bar{\psi}\left(Lx_{\varepsilon_2}\right)-g\left(Lx_{\varepsilon_2}\right)\\
 & >f^{*}_\Phi \left(\varphi_{\varepsilon_2}\right)+\phi_{\varepsilon_2}\left(x_{\varepsilon_2}\right)-g\left(Lx_{\varepsilon_2}\right)-\varepsilon_{1}\\
 & >f^{*}_\Phi \left(\varphi_{\varepsilon_2}\right)+(g\circ L)^*_\Phi (\phi_{\varepsilon_2})-\varepsilon_{1}-\varepsilon_2,
\end{align*}
as $\phi_{\varepsilon_2} \in \partial_{\varepsilon_2, \Phi} g\left(Lx_{\varepsilon_2}\right)$. From \eqref{eq:cor CP phi only zero gap 3}, we have
\begin{align*}
r & >\left(f+\bar{\psi}\circ L\right)^{*}_\Phi\left(\phi\right)+g^{*}_\Psi\left(\bar{\psi}\right)\\
 & >f^{*}_\Phi \left(\varphi_{\varepsilon_2}\right)+\left(g\circ L\right)^{*}_\Phi\left(\phi_{\varepsilon_2}\right)-\varepsilon_{2}-\varepsilon_{1}.
\end{align*}
We obtain $\left(\varphi_{\varepsilon_2},r+\varepsilon_1+\varepsilon_2-\left(g\circ L\right)^{*}_\Phi\left(\phi_{\varepsilon_2}\right)\right)\in\text{epi }f^{*}_\Phi$ and $\left( \phi_{\varepsilon_2}, \left(g\circ L\right)^*_\Phi (\phi_{\varepsilon_2}) \right)\in \text{epi } \left(g\circ L\right)^*_\Phi$.
This means $\text{epi }\left(f+g\circ L\right)^{*}_\Phi\subset\text{epi }f^{*}_\Phi+\text{epi }\left(g\circ L\right)^{*}_\Phi$, so we have \eqref{eq:cor CP phi only epigraph}.
\end{proof} 

\begin{remark}
\
\begin{itemize}
\item In Theorem \ref{thm:epi-zero gap} and Corollary \ref{cor:epi zero gap}, to obtain condition (ii), the assumptions mostly rely on $g$ and the set of elementary functions $\Psi$. Thus, with appropriate choices of $\Psi$, we can guarantee the attainment of the infimum in (ii). The motivation comes from the fact that in the construction of the conjugate dual, we perturb $g$ but not $f$.
\item If $0\in\Phi$, then the condition $\Phi - \Phi \subset \Phi$ implies symmetry of the set $\Phi$. Because we can take $0-\phi \in \Phi$ for any $\phi\in\Phi$.
\item It is true that $\text{epi }f^* =\text{supp }f$. To see this, let us take $(\phi,r)\in\text{epi } f^*$, then $\phi(x) - r \leq f(x)$ for all $x\in X$. As $\Phi$ is closed under addition of constant, we have $\phi - r\in\Phi$ and $\phi-r\in \text{supp } f$. Conversely, if $\phi\in\text{supp } f$ then $(\phi,0)\in \text{epi } f^*$ (see \cite{Jey2007}).
\item Corollary \ref{cor:epi zero gap}-(i) is closed to the additivity of the support of the conjugate in \cite[Theorem 5.1]{Jey2007}
\[
\text{supp}_\Phi (f+g\circ L) = \text{supp }_\Phi f +\text{supp }_\Phi (g\circ L),
\]
while in general, \cite[Proposition 2.2]{Jey2007}, we have
\[
\text{supp}_\Phi (f+g\circ L) = \text{co}_\Phi \left( \text{supp }_\Phi f +\text{supp }_\Phi (g\circ L) \right),
\]
where $\text{co}_\Phi A$ is the $\Phi$-convex hull of $A\subset \Phi$ i.e.
\[
\text{co}_\Phi A = \text{supp }_\Phi f_A, \quad \text{where } f_A (x) = \sup_{\phi\in A} \phi(x),\quad \forall x\in X.
\]
%\item In the case $L=Id$, the conditions for strong duality are given in terms of additivity of support which is equivalent to the epigraph in \cite[Corollary 5.2]{Jey2007}. When $L=Id,\Phi=\Psi$, Corollary \ref{cor:epi zero gap} coincides with \cite[Corollary 5.2]{Jey2007}. 
\item In the classical convex analysis, when $X$ and $Y$ are separated locally convex spaces. For lower semicontinuous proper convex functions $f,g$ and a continuous linear mapping $L$, \cite[Theorem 14.1]{Bot2009}, condition ensuring strong duality are expressed with the help of regularity condition $(RC)_5^\Phi$ as defined in \cite[Chapter 4, Section 14]{Bot2009} while in Theorem \ref{thm:epi-zero gap} and Corollary \ref{cor:epi zero gap}, we are using assumptions related to the additivity of epigraph of the conjugate. We refer the reader to Section 16 in \cite{Bot2009} for the discussion of relationship between different regularity conditions ensuring strong duality.
\end{itemize}
\end{remark}
We give an example of a convex problem satisfying the assumption (i) of Theorem \ref{thm:epi-zero gap}.
\begin{example}
Let $f\left(x,y\right)=x^{2}+y^{2},g\left(x\right)=2x^{2},L\left(x,y\right)=x+y$
and 
\begin{align*}
\Phi & =\left\{ \phi\left(x,y\right)=-a\left(x^{2}+y^{2}\right)+b_{1}x+b_{2}y,a\geq0,b_{1},b_{2}\in\mathbb{R}\right\} ,\\
\Psi & =\left\{ \psi\left(x\right)=cx,c\in\mathbb{R}\right\} .
\end{align*}
Condition \eqref{eq:CP cond}, $\phi+\psi\circ L\in\Phi$, always holds for all $\phi\in\Phi,\psi\in\Psi$ (see Example \ref{ex:3.2}-3). Let us denote 
\begin{align*}
\phi (x,y) &= -a (x^2 +y^2) +b_1 x +b_2 y, a\geq 0 \\ 
\varphi (x,y) & = -a_1 (x^2 +y^2) +b_{11} x +b_{12} y, a_1 \geq 0 \\
\psi (x) & = cx.
\end{align*}
We want to calculate 
\begin{align*}
\left(\phi,c_{1}\right)\in\text{epi }\left(f+g\circ L\right)_{\Phi}^{*} & \Leftrightarrow a\geq0,\frac{4b_{1}b_{2}-\left(3+a\right)\left(b_{1}^{2}+b_{2}^{2}\right)}{4\left(4-\left(3+a\right)^{2}\right)}\leq c_{1} \\
\left(\varphi,c_{2}\right)\in\text{epi }f_{\Phi}^{*} & \Leftrightarrow a_{1}\geq0,\frac{b_{11}^{2}+b_{12}^{2}}{4\left(a_{1}+1\right)}\leq c_{2}\\
\left(\psi,c_{3}\right)\in\text{epi }g_{\Psi}^{*} & \Leftrightarrow\psi\in\Psi, \frac{c^{2}}{8} \leq c_3
\end{align*}
Let us take $\left(\phi,c_{1}\right)\in\text{epi }\left(f+g\circ L\right)_{\Phi}^{*}$
then we want decompose $\phi=\varphi+\psi\circ L$ and $c_1=c_{2}+c_{3}$
where $\left(\varphi,c_{2}\right)\in\text{epi }f_{\Phi}^{*}$ and
$\left(\psi,c_{3}\right)\in\text{epi }g_{\Psi}^{*}$. We have the
following system 
\[
\begin{cases}
a=a_{1}\\
c+b_{11}=b_{1}\\
c+b_{12}=b_{2}
\end{cases}
\]
We can choose $c=c_{3}=0, a=a_1 =0, b_1 = b_2$ so $b_{1}=b_{11}=b_{2}=b_{12}$ and $c_1=c_{2}$. Taking $b_1 = 1 = c_1$ and assumption (i) of Theorem \ref{thm:epi-zero gap} is satisfied. Thus, we have strong duality for conjugate duality.
Note that if $\Phi$ is composed of affine functions only, we arrive at the same conclusion.
\end{example}
\section{Lagrange Dual}
\label{sec:Lagrange dual}

\subsection{Construction of Lagrangian Primal-Dual Problems}
Of equal importance is a problem to restate the results from \cite{Bed2020}, which connects the Conjugate Duality with Lagrange Duality. 
In the case we have an operator $L:X\to Y$ in the formulation of problem \eqref{prob:CP}. For other construction of Lagrangian dual proposed recently, see e.g. \cite{burachik2016asymptotic} and the references therein.

In this Section, we give new results for zero duality gap for Lagrange dual of composite problems.
To construct Lagrangian dual, we introduce the Lagrangian function $\mathcal{L}:X\times \Psi \to (-\infty,+\infty]$ as follow
\begin{equation}
\label{eq:Lagrangian}
\mathcal{L} (x,\psi) = f(x) + \psi(Lx) -g^*_\Psi (\psi),
\end{equation}
where $\Psi$ is a set of elementary functions defined on $Y$. In the case $\Psi$ is a convex set then $\mathcal{L}(x,\psi)$ is concave with respect to $\psi\in\Psi$.
The Lagrangian dual is 
\begin{equation}
\label{prob:LD}
val (LD) = \sup_{\psi\in\Psi}\inf_{x\in X} \mathcal{L} (x,\psi) = \sup_{\psi\in\Psi}\inf_{x\in X} f(x) + \psi(Lx) -g^*_\Psi (\psi), \tag{LD}
\end{equation}
and the corresponding Lagrangian primal 
\begin{equation}
\label{prob:LP}
val (LP) :=\inf_{x\in X}\sup_{\psi\in\Psi} \mathcal{L} (x,\psi) = \inf_{x\in X}\sup_{\psi\in\Psi} f(x) + \psi(Lx) -g^*_\Psi (\psi), \tag{LP}
\end{equation}
Let us state the weak duality for Lagrangian duality. 
\begin{theorem}
\label{thm:Lagrange weak dual}
Let $f:X\to (-\infty,+\infty],\ g:Y\to (-\infty,+\infty]$ and $L:X\to Y$ be a mapping. Let $\Phi,\Psi$ be sets of elementary functions defined on $X$ and $Y$, respectively. It holds
\[
\sup_{\psi\in\Psi}\inf_{x\in X} \mathcal{L} (x,\psi) \leq \inf_{x\in X}\sup_{\psi\in\Psi} \mathcal{L} (x,\psi)
\]
\end{theorem}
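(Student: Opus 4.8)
The plan is to derive the inequality from the elementary max-min bound, which holds for every extended-real-valued function of two variables and requires none of the structural assumptions on $f$, $g$, $L$, $\Phi$ or $\Psi$. The starting point is the pointwise observation that for each fixed pair $x_0\in X$ and $\psi_0\in\Psi$ one has
\[
\inf_{x\in X}\mathcal{L}(x,\psi_0)\leq \mathcal{L}(x_0,\psi_0)\leq \sup_{\psi\in\Psi}\mathcal{L}(x_0,\psi),
\]
where the left inequality is merely the definition of the infimum and the right one the definition of the supremum. These comparisons are valid in $(-\infty,+\infty]$, so no finiteness hypothesis is needed.

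Next I would decouple the two sides. Chaining the displayed bounds gives $\inf_{x\in X}\mathcal{L}(x,\psi_0)\leq \sup_{\psi\in\Psi}\mathcal{L}(x_0,\psi)$ for all $x_0\in X$ and all $\psi_0\in\Psi$; crucially, the left-hand member depends only on $\psi_0$ and the right-hand member only on $x_0$. Holding $x_0$ fixed and passing to the supremum over $\psi_0\in\Psi$ on the left (the right side being independent of $\psi_0$) yields
\[
\sup_{\psi\in\Psi}\inf_{x\in X}\mathcal{L}(x,\psi)\leq \sup_{\psi\in\Psi}\mathcal{L}(x_0,\psi).
\]
Since this holds for every $x_0\in X$, taking the infimum over $x_0$ on the right produces exactly
\[
\sup_{\psi\in\Psi}\inf_{x\in X}\mathcal{L}(x,\psi)\leq \inf_{x\in X}\sup_{\psi\in\Psi}\mathcal{L}(x,\psi),
\]
which is the asserted weak duality.

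There is in fact no genuine obstacle here: the statement is the generic weak-duality inequality, and it transfers verbatim to the present abstract-convex setting precisely because the explicit form $\mathcal{L}(x,\psi)=f(x)+\psi(Lx)-g^*_\Psi(\psi)$ is never invoked. The only point demanding care is the bookkeeping of the order in which the supremum and infimum are applied—namely that the supremum over $\psi_0$ is taken while $x_0$ is held fixed, and only afterwards the infimum over $x_0$—together with the convention that all suprema and infima are computed in the extended reals, so that the chained inequalities are never invalidated by $+\infty$ values arising from $g^*_\Psi$.
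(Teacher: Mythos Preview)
Your proof is correct and is exactly the standard max--min argument. The paper itself gives no proof of this theorem: it merely states the inequality and remarks that ``this result holds true for any functions not necessarily of the form \eqref{eq:Lagrangian},'' so your argument supplies precisely the elementary justification the authors take for granted.
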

This result holds true for any functions not necessarily of the form \eqref{eq:Lagrangian}.
First we notice that Lagrangian primal \eqref{prob:LP} is not equivalent to the composite problem \eqref{prob:CP} as
\begin{align*}
\inf_{x\in X}\sup_{\psi\in\Psi} f(x) + \psi(Lx) -g^*_\Psi (\psi) & = \inf_{x\in X} f(x) + \sup_{\psi\in\Psi} \psi(Lx) - g^*_\Psi (\psi) \\
& = \inf_{x\in X} f(x) + g^{**}_\Psi (Lx) \\
& \leq \inf_{x\in X} f(x) + g (Lx).
\end{align*}
Even though the Lagrange dual and conjugate dual are the same, the primal problems are different. Since our main focus is \eqref{prob:CP}, we discuss the assumptions which make these two primal problems equivalent. Clearly, we can assume 
\begin{equation}
\label{eq:CP-LP cond 1}
\inf_{x\in X} f(x) + g^{**}_\Psi (Lx) = \inf_{x\in X} f(x) + g (Lx).
\end{equation}
Usually, in the classical convex approach, $g$ is lsc and convex iff $g^{**}_\Psi = g$ so condition \eqref{eq:CP-LP cond 1} holds. Conversely, we have the following.
\begin{proposition}
Let $f:X\to (-\infty,+\infty],\ g:Y\to (-\infty,+\infty]$ where $X$ is a nonempty set and $Y$ is a vector space. Let $L:X\to Y$ be a mapping. Let $\Phi, \Psi$ be the sets of elementary functions  defined on $X$ and $Y$, respectively. Assume \eqref{eq:CP-LP cond 1} holds, if there exists an $x_0\in X$ such that 
\[
\inf_{x\in X} f(x) +g(Lx) \geq f(x_0) +g(Lx_0),
\]
then $g$ is $\Psi$-convex at $L x_0$.
\end{proposition}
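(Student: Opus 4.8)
The plan is to prove the single equality $g^{**}_\Psi(Lx_0) = g(Lx_0)$, since, by the pointwise reading of Theorem \ref{thm:Moreau} adopted right after it, this is exactly the statement that $g$ is $\Psi$-convex at $Lx_0$. Because one of the two inequalities is automatic, essentially all the work lies in extracting the reverse inequality from the minimality of $x_0$ and from the hypothesis \eqref{eq:CP-LP cond 1}.

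First I would record the elementary bound $g^{**}_\Psi(y) \leq g(y)$, valid at every $y \in Y$. This follows immediately from Definition \ref{def:phi-conjugate}: for any $\psi \in \Psi$ we have $g^*_\Psi(\psi) = \sup_{z}\{\psi(z)-g(z)\} \geq \psi(y)-g(y)$, so $\psi(y) - g^*_\Psi(\psi) \leq g(y)$, and taking the supremum over $\psi \in \Psi$ gives $g^{**}_\Psi(y) \leq g(y)$. In particular $g^{**}_\Psi(Lx_0) \leq g(Lx_0)$, so it only remains to establish the opposite inequality at the single point $Lx_0$.

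For the reverse inequality I would run the following chain. Since $x_0$ is one admissible point in the infimum defining $\inf_{x\in X}\{f(x)+g^{**}_\Psi(Lx)\}$, we have $f(x_0)+g^{**}_\Psi(Lx_0) \geq \inf_{x\in X}\{f(x)+g^{**}_\Psi(Lx)\}$. By the standing hypothesis \eqref{eq:CP-LP cond 1} this infimum equals $\inf_{x\in X}\{f(x)+g(Lx)\}$; and the assumption on $x_0$, together with the trivial bound $\inf_{x\in X}\{f(x)+g(Lx)\} \leq f(x_0)+g(Lx_0)$, forces $\inf_{x\in X}\{f(x)+g(Lx)\} = f(x_0)+g(Lx_0)$. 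Concatenating these relations yields $f(x_0)+g^{**}_\Psi(Lx_0) \geq f(x_0)+g(Lx_0)$, and cancelling $f(x_0)$ gives $g^{**}_\Psi(Lx_0) \geq g(Lx_0)$, which combined with the bound of the previous paragraph produces the desired equality.

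The one place needing care — and the main (mild) obstacle — is the cancellation of $f(x_0)$, which is legitimate only when $f(x_0)$ is finite. I would justify this by observing that the common optimal value $v := \inf_{x\in X}\{f(x)+g(Lx)\}$ equals $f(x_0)+g(Lx_0)$; in the meaningful case $v < +\infty$, since neither $f(x_0)$ nor $g(Lx_0)$ can equal $-\infty$, both are necessarily finite and the cancellation is valid. The degenerate case $v = +\infty$ (where $f + g\circ L \equiv +\infty$ and $f(x_0)=+\infty$) makes the chain inequality vacuous and should be excluded from the statement, so I would note explicitly that $x_0$ is assumed to attain a finite optimal value. This finishes the proof.
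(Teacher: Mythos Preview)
Your proof is correct and follows essentially the same approach as the paper: both run the chain $f(x_0)+g(Lx_0)\le \inf_x\{f(x)+g^{**}_\Psi(Lx)\}\le f(x_0)+g^{**}_\Psi(Lx_0)$ (using \eqref{eq:CP-LP cond 1} and the minimality of $x_0$), then cancel $f(x_0)$ and combine with $g^{**}_\Psi\le g$. You are in fact more careful than the paper in flagging that the cancellation of $f(x_0)$ requires $f(x_0)<+\infty$, a point the paper's proof passes over silently.
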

\begin{proof}
We have
\[
\inf_{x\in X}f\left(x\right)+g\left(Lx\right) \geq f\left(x_{0}\right)+g\left(Lx_{0}\right).
\]
From \eqref{eq:CP-LP cond 1} we have 
\[
f\left(x_{0}\right)+g\left(Lx_{0}\right) \leq \inf_{x\in X}f\left(x\right)+g^{**}_\Psi \left(Lx\right)\leq f\left(x_{0}\right)+g^{**}_\Psi \left(Lx_{0}\right),
\]
which implies $g\left(Lx_{0}\right)\leq g^{**}_\Psi \left(Lx_{0}\right)$.
From the definition of biconjugate function, $g^{**}_\Psi\leq g$, so we
have $g\left(Lx_{0}\right)=g^{**}_\Psi \left(Lx_{0}\right)$. Thus, $g$
is $\Psi$-convex at $L x_{0}$. 
\end{proof}
%Note that if we assume $\inf_{x\in X} f(x) + g^{**}_\Psi (Lx)$ is attained, we cannot get the  conclusion of the above proposition.

\subsection{Lagrange Zero Duality Gap}
In the present subsection, we discuss zero duality gap for Lagrange duality. We follow the result of \cite{Bed2020} and exploit the intersection property to prove zero duality gap. In our context, Theorem 6.1 in \cite{Bed2020} takes the following form.
\begin{theorem}
\label{thm:intersection-1}
(Theorem 6.1 \cite{Bed2020})
Let $X = Y$ be a vector space, $\Phi$ be a convex set of elementary functions defined on $X$. Let $\mathcal{L} (x,\psi)$ be Lagrangian given by \eqref{eq:Lagrangian} with $L=Id$. Then the following are equivalent.
\begin{enumerate}
\item For every $\alpha<\inf_{x\in X}\sup_{\psi\in\Phi}\mathcal{L}\left(x,\psi\right)$,
there exists $\psi_{1},\psi_{2}\in\Phi$ and $\phi_{1}\in\text{supp }\mathcal{L}\left(\cdot,\psi_{1}\right),\phi_{2}\in\text{supp }\mathcal{L}\left(\cdot,\psi_{2}\right)$
such that $\phi_{1},\phi_{2}$ have the intersection property at level $\alpha$; i.e., for all $t\in\left[0,1\right]$ 
\[
\left[t\phi_{1}+\left(1-t\right)\phi_{2}<\alpha\right]\cap\left[\phi_{1}<\alpha\right]=\emptyset\text{ or }\left[t\phi_{1}+\left(1-t\right)\phi_{2}<\alpha\right]\cap\left[\phi_{2}<\alpha\right]=\emptyset,
\]
where $\left[\phi_{1}<\alpha\right]:=\left\{ x\in X:\phi_{1}\left(x\right)<\alpha\right\} $.

\item $val(LP) = val (DCP)$ i.e. $\inf_{x\in X}\sup_{\psi\in\Phi}\mathcal{L}\left(x,\psi\right)=\sup_{\psi\in\Phi}\inf_{x\in X}\mathcal{L}\left(x,\psi\right).$
\end{enumerate}
\end{theorem}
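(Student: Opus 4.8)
The plan is to establish the nontrivial inequality $val(LP)\le val(LD)$, since the reverse is exactly the weak duality already recorded in Theorem \ref{thm:Lagrange weak dual}; together these yield the minimax equality in (ii). Throughout I write $p:=val(LP)=\inf_{x}\sup_{\psi}\mathcal{L}(x,\psi)$ and $d:=val(LD)=\sup_{\psi}\inf_{x}\mathcal{L}(x,\psi)$, and I use two structural facts: $\Phi$ is convex, and, as observed right after \eqref{eq:Lagrangian}, the map $\psi\mapsto\mathcal{L}(x,\psi)=f(x)+\psi(x)-g^{*}_\Psi(\psi)$ is concave (because $g^{*}_\Psi$ is convex and evaluation is linear in $\psi$).

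For (i) $\Rightarrow$ (ii) I would argue by contradiction: suppose $d<p$ and fix $\alpha$ with $d<\alpha<p$. Since $\alpha<p$, hypothesis (i) furnishes $\psi_{1},\psi_{2}\in\Phi$ and $\phi_{i}\in\text{supp }\mathcal{L}(\cdot,\psi_{i})$ having the intersection property at level $\alpha$. For $t\in[0,1]$ set $\phi_{t}:=t\phi_{1}+(1-t)\phi_{2}$ and $\psi_{t}:=t\psi_{1}+(1-t)\psi_{2}$, both in $\Phi$ by convexity. The first key step is $\phi_{t}\in\text{supp }\mathcal{L}(\cdot,\psi_{t})$: by concavity in the dual variable and $\phi_{i}\le\mathcal{L}(\cdot,\psi_{i})$,
\[
\mathcal{L}(x,\psi_{t})\ge t\mathcal{L}(x,\psi_{1})+(1-t)\mathcal{L}(x,\psi_{2})\ge t\phi_{1}(x)+(1-t)\phi_{2}(x)=\phi_{t}(x).
\]
Hence, if I can locate $t^{*}$ with $\phi_{t^{*}}\ge\alpha$ everywhere, then $\inf_{x}\mathcal{L}(x,\psi_{t^{*}})\ge\alpha$, so $d\ge\alpha$, contradicting $\alpha>d$ and forcing $d=p$.

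Producing such a $t^{*}$ is the heart of the argument and the step I expect to be the main obstacle. Writing $A_{t}:=[\phi_{t}<\alpha]$, if either $A_{0}$ or $A_{1}$ is empty I am already done (e.g. $A_{1}=\emptyset$ gives $\phi_{1}\ge\alpha$, so take $t^{*}=1$). Otherwise I first note $A_{t}\subset A_{0}\cup A_{1}$, since $\phi_{1}(x),\phi_{2}(x)\ge\alpha$ forces $\phi_{t}(x)\ge\alpha$; and I read off from the intersection property evaluated at $t=0$ and $t=1$ (where the second alternative would say $A_{0}=\emptyset$ or $A_{1}=\emptyset$) that $A_{0}\cap A_{1}=\emptyset$. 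The intersection property then asserts that for every $t$ either $A_{t}\cap A_{1}=\emptyset$ or $A_{t}\cap A_{0}=\emptyset$; set $P:=\{t:A_{t}\cap A_{1}=\emptyset\}$ and $Q:=\{t:A_{t}\cap A_{0}=\emptyset\}$, so $P\cup Q=[0,1]$ with $0\in P$ and $1\in Q$. Each of $P,Q$ is closed, because $A_{t}\cap A_{1}=\emptyset$ means $t\phi_{1}(x)+(1-t)\phi_{2}(x)\ge\alpha$ for all $x\in A_{1}$, an intersection over $x$ of closed conditions on $t$. As $[0,1]$ is connected it cannot be a union of two disjoint nonempty closed sets, so there is $t^{*}\in P\cap Q$; for this $t^{*}$ one has $A_{t^{*}}\cap A_{0}=A_{t^{*}}\cap A_{1}=\emptyset$ together with $A_{t^{*}}\subset A_{0}\cup A_{1}$, whence $A_{t^{*}}=\emptyset$, i.e. $\phi_{t^{*}}\ge\alpha$ everywhere, as required.

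Finally, for (ii) $\Rightarrow$ (i), assuming $p=d$ I fix any $\alpha<p=d$. Then $\alpha<\sup_{\psi}\inf_{x}\mathcal{L}(x,\psi)$, so there is $\psi^{*}\in\Phi$ with $\inf_{x}\mathcal{L}(x,\psi^{*})>\alpha$, that is $\mathcal{L}(x,\psi^{*})>\alpha$ for all $x$. Since $0\in\Phi$ and $\Phi$ is closed under addition of constants, the constant function $\alpha$ belongs to $\Phi$ and lies in $\text{supp }\mathcal{L}(\cdot,\psi^{*})$. Choosing $\psi_{1}=\psi_{2}=\psi^{*}$ and $\phi_{1}=\phi_{2}\equiv\alpha$ makes $\phi_{t}\equiv\alpha$ for every $t$, so each sublevel set $[\phi_{t}<\alpha]$ is empty and the intersection property at level $\alpha$ holds trivially. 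Hence (i) follows.
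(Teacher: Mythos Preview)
Your argument is correct. Note, however, that the paper does not actually prove this theorem: it simply records ``The proof can be found in \cite{Syga2018}.'' So there is no in-paper proof to compare against; what can be compared is the auxiliary Lemma~\ref{lm:intersection prop} (also quoted from \cite{Bed2020} without proof), which states that the intersection property at level $\alpha$ is equivalent to the existence of $t_{0}\in[0,1]$ with $t_{0}\phi_{1}+(1-t_{0})\phi_{2}\ge\alpha$ everywhere. Your connectedness argument on the closed sets $P=\{t:A_{t}\cap A_{1}=\emptyset\}$ and $Q=\{t:A_{t}\cap A_{0}=\emptyset\}$ is precisely a self-contained proof of that lemma, after which the passage from $\phi_{t^{*}}\ge\alpha$ to $\inf_{x}\mathcal{L}(x,\psi_{t^{*}})\ge\alpha$ via concavity of $\psi\mapsto\mathcal{L}(x,\psi)$ is exactly the mechanism the paper itself uses later (see \eqref{eq:Lagrange intersection 2}--\eqref{eq:Lagrange intersection 3} in the proof of Theorem~\ref{thm:Lagrange intersection}). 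Thus your route coincides with the intended one, with the bonus that you supply the missing proof of Lemma~\ref{lm:intersection prop} rather than citing it.

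One small remark on (ii) $\Rightarrow$ (i): you invoke that the constant function $\alpha$ lies in $\Phi$. This is indeed a standing convention in the paper (elementary classes are closed under addition of constants and, from Section~\ref{sec:Conjugate dual} onward, contain $0$), but it is not listed among the hypotheses of the theorem itself; it would be worth flagging that you are using it.
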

The proof can be found in \cite{Syga2018}. 

%\begin{remark}
%\begin{itemize}
%\
%\item In the conjugate dual, from assumption (1) of Theorem \ref{thm:CP zero gap}, to get zero duality gap for any $\varepsilon>0$, we have to find $x_\varepsilon\in X$ and $\psi_\varepsilon\in\partial_{\varepsilon, \Psi} g(Lx_\varepsilon)$ such that 
%\[
%(f+\psi_\varepsilon \circ L)^*_\Phi (0) \leq (f+\psi_\varepsilon\circ L)(x_\varepsilon) +\varepsilon.
%\]
%By Fenchel-Young inequality, this means that $0\in\partial_{\varepsilon, \Phi} (f+\psi_\varepsilon \circ L)(Lx_\varepsilon)$. Thus there is a connection between $\Psi$ and $\Phi$. Another example of connection between $\Psi$ and $\Phi$ are conditions \eqref{eq:CP cond} and \eqref{eq:CP cond 1}. 

%In Lagrangian duality, the connection between $\Psi$ and $\Phi$ lies in the intersection property. We need to find four functions to satisfy Theorem \ref{thm:intersection-1}-(i) for every $\alpha\in\mathbb{R}$. 
%This is less restrictive compare to \eqref{eq:CP cond} and \eqref{eq:CP cond 1} where we have to assume for the whole class $\Psi$ and $\Phi$. 
%Moreover, we also need to assume \eqref{eq:CP-LP cond 1} for \eqref{prob:CP} to become Lagrangian primal.
%\item 
\begin{remark}
In the case where for every $\alpha <\inf_{x\in X} \sup_{\psi\in\Psi}\mathcal{L} (x,\psi)$, we have $\psi_1 =\psi_2$, i.e. $\phi_{1},\phi_{2}$ belong to the same support set $\text{supp }\mathcal{L }\left(\cdot,\psi_{1}\right)$ for some $\psi_1 \in \Psi$, the assumption of concavity of the Lagrangian can be removed. (cf. \cite{Syga2016}).
\end{remark}
%\end{itemize}
%\end{remark}
For convenience, we state a result from \cite{Bed2020}.
\begin{lemma}
\label{lm:intersection prop}
(Lemma 6.2 \cite{Bed2020})
Let $X$ be a set, $\alpha\in\mathbb{R}$ and let
$\phi_{1},\phi_{2}:X\to\mathbb{R}$. Two functions $\phi_{1},\phi_{2}$
have the intersection property at level $\alpha$ if and only if there exists $t_{0}\in\left[0,1\right]$ such that $t_{0}\phi_{1}\left(x\right)+\left(1-t_{0}\right)\phi_{2}\left(x\right)\geq\alpha$
for all $x\in X$. 
\end{lemma}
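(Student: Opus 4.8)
The plan is to prove the two implications separately, exploiting throughout that for a fixed $t\in[0,1]$ the function $t\phi_1+(1-t)\phi_2$ is a pointwise convex combination of $\phi_1$ and $\phi_2$, so its position relative to the level $\alpha$ is governed by that of $\phi_1$ and $\phi_2$ individually. It is useful first to record the contrapositive form of the right-hand condition: there is \emph{no} $t_0\in[0,1]$ with $t_0\phi_1(x)+(1-t_0)\phi_2(x)\ge\alpha$ for all $x\in X$ precisely when, for every $t\in[0,1]$, the sublevel set $[t\phi_1+(1-t)\phi_2<\alpha]$ is nonempty. I will prove ``$\Leftarrow$'' directly and ``$\Rightarrow$'' through this contrapositive.

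For ``$\Leftarrow$'', fix $t_0$ with $t_0\phi_1+(1-t_0)\phi_2\ge\alpha$ on $X$ and suppose, for contradiction, that the intersection property fails: there is a $t\in[0,1]$ for which both $[t\phi_1+(1-t)\phi_2<\alpha]\cap[\phi_1<\alpha]$ and $[t\phi_1+(1-t)\phi_2<\alpha]\cap[\phi_2<\alpha]$ are nonempty. Choosing witnesses $x_1,x_2$ and setting $a_i=\phi_1(x_i)$, $b_i=\phi_2(x_i)$, membership gives $a_1<\alpha$ and $b_2<\alpha$; the $t_0$-inequality at $x_1$ together with $a_1<\alpha$ then forces $t_0<1$ and $b_1>\alpha$, while at $x_2$ with $b_2<\alpha$ it forces $t_0>0$ and $a_2>\alpha$. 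Rewriting each affine inequality in the parameter as a threshold (for $a<\alpha<b$ one has $t'a+(1-t')b<\alpha\iff t'>\frac{b-\alpha}{b-a}$, and the reverse nonstrict inequality $\iff t'\le\frac{b-\alpha}{b-a}$, with the symmetric statement when $b<\alpha<a$), the two inequalities attached to $x_1$ give $t_0\le\frac{b_1-\alpha}{b_1-a_1}<t$, while those attached to $x_2$ give $t_0\ge\frac{\alpha-b_2}{a_2-b_2}>t$. Together these yield $t_0<t$ and $t_0>t$, a contradiction, so the intersection property holds.

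For ``$\Rightarrow$'', assume $[t\phi_1+(1-t)\phi_2<\alpha]\neq\emptyset$ for all $t\in[0,1]$; the goal is to produce a single $t$ at which both intersections above are nonempty. The key observation is that whenever $t\phi_1(x)+(1-t)\phi_2(x)<\alpha$ at least one of $\phi_1(x)<\alpha$ or $\phi_2(x)<\alpha$ must hold, since a convex combination of two reals both $\ge\alpha$ is itself $\ge\alpha$. Consequently, writing $\mathcal A=\{t\in[0,1]:[t\phi_1+(1-t)\phi_2<\alpha]\cap[\phi_1<\alpha]\neq\emptyset\}$ and defining $\mathcal B$ analogously with $\phi_2$, the nonempty sublevel set at each $t$ meets $[\phi_1<\alpha]$ or $[\phi_2<\alpha]$, so $\mathcal A\cup\mathcal B=[0,1]$, with $1\in\mathcal A$ and $0\in\mathcal B$.

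It remains to convert this covering into an actual common parameter, which is where I expect the main obstacle to lie. If some $x_0$ satisfies $\phi_1(x_0)<\alpha$ and $\phi_2(x_0)<\alpha$ simultaneously, then $x_0$ witnesses both sets for every $t$, so $\mathcal A=\mathcal B=[0,1]$ and any $t$ works. Otherwise every point has at most one coordinate below $\alpha$, and a direct computation shows each witness contributes a half-interval of admissible parameters anchored at an endpoint, so that $\mathcal A=(s,1]$ and $\mathcal B=[0,r)$ for some $s,r\in[0,1]$. Since $(s,1]\cup[0,r)=[0,1]$ while the point $s$ lies in neither interval unless $s<r$, the covering forces $r>s$, whence $\mathcal A\cap\mathcal B\supseteq(s,r)\neq\emptyset$; any $t$ in this overlap makes both intersections nonempty, so the intersection property fails, completing the contrapositive. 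The delicate step is precisely this endpoint bookkeeping — ruling out the degenerate ``clean cut'' $[0,s)\sqcup[s,1]$ — whereas the ``$\Leftarrow$'' direction is a routine one-parameter calculation once the two witnesses are introduced.
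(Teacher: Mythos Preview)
The paper does not prove this lemma at all: it is merely quoted from \cite{Bed2020} (``For convenience, we state a result from \cite{Bed2020}''), so there is no in-paper argument to compare against. Your proof is correct and self-contained. The ``$\Leftarrow$'' direction is handled cleanly via the threshold parametrisation, and in the contrapositive of ``$\Rightarrow$'' the decomposition into $\mathcal A$ and $\mathcal B$, together with the observation that in Case~2 every admissible witness for $\mathcal A$ contributes an interval $(\tau_x,1]$ (and dually for $\mathcal B$), does exactly what is needed; the covering $(s,1]\cup[0,r)=[0,1]$ then forces $s<r$ because $s$ must lie in one of the two pieces and it cannot lie in $(s,1]$. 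The only point where a reader might want one more line is the assertion that $\mathcal A$ is genuinely \emph{open} at its left endpoint (so the degenerate split $[0,s)\sqcup[s,1]$ cannot occur): this follows because in Case~2 any $x$ with $\phi_1(x)<\alpha$ has $\phi_2(x)\ge\alpha$, whence $0\cdot\phi_1(x)+1\cdot\phi_2(x)\ge\alpha$ and $x$ never witnesses $t=\tau_x$; spelling that out would remove the one step you flag as ``delicate''.
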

Observe that in Theorem 6.3, the class of elementary functions $\Phi$ is arbitrary. Below, we relate the intersection property to the lower semi-continuity of the optimal value function, $V$, in the case where elementary functions satisfy additional conditions. For other results along this line, see e.g. \cite{rubinov2002zero,huang2005further}.

Let us define the optimal value function $V:Y\to [-\infty, +\infty)$ as follow
\begin{equation}
    V(y) := \inf_{x\in X} \beta (x,y)
\end{equation}
where the function $\beta$ is defined at the beginning of Section \ref{sec:Conjugate dual}. Similar to \eqref{eq:CP-LP cond 1}, we also define the following condition for any $y\in Y$
\begin{equation}
\label{eq:CP-LP cond 2}
\inf_{x\in X} f(x) + g^{**}_\Psi (Lx+y) = \inf_{x\in X} f(x) + g (Lx+y).
\end{equation}
A function $V:Y \to [-\infty,+\infty)$ is lower semi-continuous at a point $y_0\in Y$, if for every $\varepsilon>0$, there exists a neighborhood $W(y_0)$ such that 
\begin{equation}
    V(y) > V(y_0) -\varepsilon, \quad \forall y\in W(y_0).
\end{equation}

The following theorem below connects Theorem \ref{thm:CP zero gap}, the intersection property and the lower semi-continuity of the objective function at $y_0 = 0$.

\begin{theorem}
\label{thm:Lagrange intersection}
Let $f:X\to\left(-\infty,+\infty\right],\ g:Y\to\left(-\infty,+\infty\right]$.
Let $L:X\to Y$ be a mapping from $X$ to $Y$ with $\text{dom } g\cap L\left(\text{dom }f\right)\neq\emptyset$. Let $\Phi, \Psi$ be sets of elementary functions defined on $X$ and $Y$, respectively. Let $\mathcal{L} (x,\psi)$ be the Lagrangian defined in \eqref{eq:Lagrangian}. Assume $\Psi$ is
convex, $0\in\Phi$ and $\inf_{x\in X} f(x) +g(Lx) <+\infty$. Consider the following.
\begin{enumerate}
\item For every $\varepsilon>0$, there exist $x_\varepsilon\in X$ and $\psi_\varepsilon\in\partial_{\varepsilon, \Psi} g\left(Lx_\varepsilon\right)$
such that $0\in\partial_{\varepsilon, \Phi}\left(f+\psi_\varepsilon\circ L\right)\left(x_\varepsilon\right)$.
\item For every $\alpha<\inf_{x\in X}\sup_{\psi\in\Psi}\mathcal{L}\left(x,\psi\right)$,
there exists $\phi_{1},\phi_{2}\in\Phi$, $\psi_{1},\psi_{2}\in\Psi$,
$\phi_{1}\in\text{supp}\mathcal{L}\left(,\psi_{1}\right),\phi_{2}\in\text{supp}\mathcal{L}\left(,\psi_{2}\right)$
such that the intersection property holds for $\phi_{1},\phi_{2}$
at level $\alpha$.

\item The function $V$ is lower semi-continuous at 0.
\end{enumerate}
We have (i)$\Rightarrow$(ii). If condition \eqref{eq:CP-LP cond 1} holds, then (ii)$\Leftrightarrow$(i).
Moreover, if \eqref{eq:CP-LP cond 1} holds and $\Psi$ is composed of lower semi-continuous functions, then (ii)$\Rightarrow$(iii). (iii)$\Rightarrow$(ii) when $\Psi$ is a collection of continuous functions and \eqref{eq:CP-LP cond 2} holds.
\end{theorem}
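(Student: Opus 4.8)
The plan is to separate the statement into its dual-algebraic core, namely the equivalence of (i) and (ii), and the semicontinuity characterisation (ii)$\Leftrightarrow$(iii), which I expect to be the genuinely delicate part. Throughout I will use that $val(DCP)=val(LD)$, which is exactly the computation \eqref{eq:beta coupling} read backwards: $-\beta^c(0,\psi)=\inf_{x\in X}\{f(x)+\psi(Lx)\}-g^*_\Psi(\psi)=\inf_{x\in X}\mathcal{L}(x,\psi)$, so that taking $\sup_{\psi}$ identifies the conjugate dual value with the Lagrangian dual value.

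First I would dispose of the link between (i) and zero duality gap. By the Fenchel-type inequality \eqref{prop1:Fenchel} of Proposition \ref{prop1} applied to $f+\psi_\varepsilon\circ L$ with $\phi=0$, the inclusion $0\in\partial_{\varepsilon,\Phi}(f+\psi_\varepsilon\circ L)(x_\varepsilon)$ is literally the inequality $(f+\psi_\varepsilon\circ L)^*_\Phi(0)\le -f(x_\varepsilon)-\psi_\varepsilon(Lx_\varepsilon)+\varepsilon$, which is condition (i) of Theorem \ref{thm:CP zero gap} verbatim. Hence condition (i) here is equivalent to the zero duality gap $-val(CP)=(f+g\circ L)^*_\Phi(0)=-val(DCP)<+\infty$, i.e. to $val(CP)=val(LD)$. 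Next, for (i)$\Rightarrow$(ii), I would invoke the sandwich $val(LD)\le val(LP)\le val(CP)$, where the first inequality is Theorem \ref{thm:Lagrange weak dual} and the second follows from $g^{**}_\Psi\le g$ together with the identity $val(LP)=\inf_{x}f(x)+g^{**}_\Psi(Lx)$. Zero duality gap collapses this chain, giving the Lagrangian minimax equality $val(LP)=val(LD)$ (and, incidentally, \eqref{eq:CP-LP cond 1} for free); by Theorem \ref{thm:intersection-1}, which is an abstract minimax statement about the family $\{\mathcal{L}(\cdot,\psi)\}_{\psi\in\Psi}$ and its $\Phi$-supports and therefore applies verbatim with $\psi\circ L$ in place of $\psi$, this minimax equality is exactly the intersection property (ii). For the converse under \eqref{eq:CP-LP cond 1}, Theorem \ref{thm:intersection-1} turns (ii) into $val(LP)=val(LD)$, \eqref{eq:CP-LP cond 1} upgrades it to $val(CP)=val(LD)$, and Theorem \ref{thm:CP zero gap}((ii)$\Rightarrow$(i)) combined again with \eqref{prop1:Fenchel} returns (i).

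The semicontinuity equivalence is where I expect the real work. The usual convex route ``$val(LD)=V^{**}(0)$ and $V$ lsc at $0$ imply $V(0)=V^{**}(0)$'' is \emph{not} available, because for nonlinear $\psi$ the $\Psi$-biconjugate of $V(y)=\inf_x f(x)+g(Lx+y)$ taken with the plain coupling $\psi(y)$ does not reproduce $val(LD)$: the coupling $c$ of \eqref{eq:coupling func} irreducibly entangles $x$ and $y$ through $Lx$. I would therefore argue by hand. For (ii)$\Rightarrow$(iii), I would feed the intersection property through Lemma \ref{lm:intersection prop} and the concavity of $\psi\mapsto\mathcal{L}(x,\psi)$ on the convex set $\Psi$ to extract, for each $\alpha<val(LP)=V(0)$ (the equality by \eqref{eq:CP-LP cond 1}), a single multiplier $\psi_\alpha\in\Psi$ with $\inf_x\mathcal{L}(x,\psi_\alpha)\ge\alpha$. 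Rewriting dual feasibility as $f(x)\ge\alpha+g^*_\Psi(\psi_\alpha)-\psi_\alpha(Lx)$ and using $g(Lx+y)\ge\psi_\alpha(Lx+y)-g^*_\Psi(\psi_\alpha)$ yields the key lower bound $V(y)\ge\alpha+\inf_{w\in L(X)}\{\psi_\alpha(w+y)-\psi_\alpha(w)\}$; letting $y\to0$ and using lower semicontinuity of $\psi_\alpha$ to force $\liminf_{y\to0}\inf_{w}\{\psi_\alpha(w+y)-\psi_\alpha(w)\}\ge0$ gives $\liminf_{y\to0}V(y)\ge\alpha$ for every $\alpha<V(0)$, i.e. lower semicontinuity of $V$ at $0$. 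For (iii)$\Rightarrow$(ii) I would run the argument in reverse: under \eqref{eq:CP-LP cond 2} one has $V(y)=\inf_x f(x)+g^{**}_\Psi(Lx+y)$ for \emph{all} $y$, so lower semicontinuity of $V$ at $0$ furnishes a uniform bound $f(x)+g(Lx+y)\ge\alpha$ for all $x\in X$ and all $y$ in a neighbourhood of $0$; continuity of the elementary functions in $\Psi$ should then let me convert this uniform lower bound into an approximate multiplier realising condition (i), after which the already-proved (i)$\Rightarrow$(ii) closes the loop.

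The crux of the semicontinuity part, and the step I expect to be the main obstacle, is controlling the increment $\inf_{w\in L(X)}\{\psi(w+y)-\psi(w)\}$ as $y\to0$. Because the infimum ranges over the whole image $L(X)$, pointwise semicontinuity of $\psi$ is not enough: one needs the increments to vanish \emph{uniformly} in $w$. This is precisely where the semicontinuity hypotheses on $\Psi$ are consumed, and it accounts for the asymmetry of the statement, with lower semicontinuity sufficing to push the one-sided bound through in (ii)$\Rightarrow$(iii) while the reverse passage in (iii)$\Rightarrow$(ii) needs genuine continuity together with the global identity \eqref{eq:CP-LP cond 2} rather than its value only at $0$. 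I expect the technical heart to be verifying this uniform control, and isolating the minimal hypotheses on $\Psi$ and on $L(X)$ under which it holds (affine $\Psi$, bounded $L(X)$, or an equicontinuity assumption being the natural candidates).
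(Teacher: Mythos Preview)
Your plan for (i)$\Leftrightarrow$(ii) is more modular than the paper's: you route through Theorem~\ref{thm:CP zero gap} and Theorem~\ref{thm:intersection-1}, whereas the paper argues both directions by hand. The one soft spot is your invocation of Theorem~\ref{thm:intersection-1}: as stated there, the hypotheses are $X=Y$, $L=Id$, and convexity of the \emph{single} class $\Phi$ (which doubles as the parameter set for $\psi$), while Theorem~\ref{thm:Lagrange intersection} assumes only that $\Psi$ is convex and says nothing about convexity of $\Phi$. The underlying Syga minimax argument does go through (what is actually used is concavity of $\psi\mapsto\mathcal{L}(x,\psi)$ on the convex set $\Psi$ together with Lemma~\ref{lm:intersection prop}), but you should say this explicitly rather than cite Theorem~\ref{thm:intersection-1} ``verbatim''. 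The paper's direct proof of (i)$\Rightarrow$(ii) simply exhibits $\phi_2=\alpha$ as a constant in $\text{supp}\,\mathcal{L}(\cdot,\psi_\varepsilon)$, for which the intersection property is trivial; the paper's (ii)$\Rightarrow$(i) is exactly the concavity-plus-Lemma~\ref{lm:intersection prop} computation you would have unwound from Theorem~\ref{thm:intersection-1} anyway. Your route buys a cleaner factoring; the paper's buys independence from the precise hypotheses under which Theorem~\ref{thm:intersection-1} was recorded.

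For (ii)$\Leftrightarrow$(iii) your outline coincides with the paper's: extract a single multiplier $\psi_0$ from the intersection property via Lemma~\ref{lm:intersection prop} and concavity, then compare $V(y)$ with $V(0)$ through the increment $\psi_0(Lx+y)-\psi_0(Lx)$. You are right that the crux is the uniformity in $x$ (equivalently, in $w\in L(X)$) of this increment as $y\to0$; pointwise lower semicontinuity of $\psi_0$ gives a neighbourhood $W_x(0)$ depending on $x$, not a single $W(0)$ that works for all $x$ before taking the infimum. The paper's proof passes over exactly this point: it writes ``there exists a neighborhood $W(0)\subset Y$ such that $\psi_x(y)\ge\psi_x(0)-\varepsilon/2$'' and then takes $\inf_x$ as if $W(0)$ were independent of $x$, and the analogous step occurs in the (iii)$\Rightarrow$(ii) direction with continuity in place of lower semicontinuity. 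So your identification of this as the genuine obstacle is accurate, and your proposal does not miss anything relative to the paper; rather, you have flagged a uniformity issue that the paper's argument leaves implicit. If you want the statement to go through as written, you will need either an equicontinuity-type hypothesis on $\Psi$ (automatic for $\Psi=\Psi_{conv}$), or some additional structure on $L(X)$.
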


\begin{proof}
(i) $\Rightarrow$ (ii): For every $\alpha\in\mathbb{R}$, let $\alpha<\inf_{x\in X}\sup_{\psi\in\Psi}\mathcal{L}\left(x,\psi\right)$. 
For every $\varepsilon>0$,
we can find $x_\varepsilon\in X$ and $\psi_\varepsilon\in\partial_{\varepsilon, \Psi}g\left(Lx_\varepsilon\right)$
such that $0\in\partial_{\varepsilon, \Phi}\left(f+\psi_\varepsilon\circ L\right)\left(x_\varepsilon\right)$,
\begin{equation}
\label{eq:Lagrange intersection 1}
\left(\forall z\in X\right)\quad f\left(z\right)+\psi_\varepsilon\left(Lz\right)\geq f\left(x_\varepsilon\right)+\psi_\varepsilon\left(Lx_\varepsilon\right)-\varepsilon.
\end{equation}
We also have  $\psi_\varepsilon\in\partial_{\varepsilon, \Psi}g\left(Lx_\varepsilon\right)$ so $g(Lx_\varepsilon) + g^*_\Psi (\psi_\varepsilon) \leq \psi_\varepsilon(Lx_\varepsilon) + \varepsilon$. 
Putting this into \eqref{eq:Lagrange intersection 1}, 
\begin{align*}
f\left(z\right)+\psi_\varepsilon\left(Lz\right) & \geq f\left(x_\varepsilon\right)+\psi_\varepsilon\left(Lx_\varepsilon\right)-\varepsilon\\
 & \geq f\left(x_\varepsilon\right)+g\left(Lx_\varepsilon\right)+g^{*}_\Psi\left(\psi_\varepsilon\right)-2\varepsilon,
\end{align*}
so 
\begin{equation}
\label{eq:thm 3.9}
\mathcal{L}\left(z,\psi_\varepsilon\right)=f\left(z\right)+\psi_\varepsilon\left(Lz\right)-g^{*}_\Psi\left(\psi_\varepsilon\right)\geq f\left(x_\varepsilon\right)+g\left(Lx_\varepsilon\right)-2\varepsilon.
\end{equation}
As $\varepsilon>0$ is arbitrary, we choose $\varepsilon = (f(x_\varepsilon) + g(Lx_\varepsilon) - \alpha)/2 >0$ so that $\mathcal{L} (z,\psi_\varepsilon) > \alpha$.
Pick $\phi_{1}\in\Phi$ and $\psi_{1}\in\Psi$ such that $\phi_{1}\in\text{supp }\mathcal{L}\left(\cdot,\psi_{1}\right)$.
By choosing $\phi_{2}=\alpha\in\Phi$, we have $\phi_{2}\in\text{supp }\mathcal{L}\left(\cdot,\psi_\varepsilon\right)$.
From Lemma \ref{lm:intersection prop} with $t=1$ we have 
\[
\mathcal{L}\left(z,t\psi_\varepsilon+\left(1-t\right)\psi_{1}\right)=\mathcal{L}\left(z,\psi_\varepsilon\right)>t\alpha+\left(1-t\right)\phi_{1}\left(x\right)=\alpha,
\]
so the intersection property holds for $\phi_{1},\phi_{2}$ at level
$\alpha$. As $\alpha$ is arbitrary, we have (ii). 

(ii) $\Rightarrow$ (i): As \eqref{eq:CP-LP cond 1} holds, we have $val(CP)= val(LP)$.
From the primal problem
\eqref{prob:CP}, for every $\varepsilon>0$, there exists an $x_{\varepsilon}\in X$
such that 
\[
\inf_{x\in X}f\left(x\right)+g\left(Lx\right)>f\left(x_{\varepsilon}\right)+g\left(Lx_{\varepsilon}\right)-\varepsilon.
\]
Denoting $\alpha:= val (\text{CP}) -\varepsilon$, we have $\alpha<val\left(\text{CP}\right)$. There exist $\phi_{1},\phi_{2}\in\Phi$ and $\psi_1,\psi_2\in \Psi$ such that $\phi_1 \in \text{supp } \mathcal{L} (\cdot,\psi_1),\phi_2 \in \text{supp } \mathcal{L} (\cdot,\psi_2)$ and the intersection property holds at level $\alpha$. Lemma \ref{lm:intersection prop} gives us a $t_{0}\in\left[0,1\right]$ such that

\begin{equation}
\label{eq:Lagrange intersection 2}
t_{0}\phi_{1}\left(x\right)+\left(1-t_{0}\right)\phi_{2}\left(x\right)\geq\alpha\quad\forall x\in X.
\end{equation}
Because $\phi_{1}\in\text{supp}\mathcal{L}\left(\cdot,\psi_{1}\right),\phi_{2}\in\text{supp}\mathcal{L}\left(\cdot,\psi_{2}\right)$, we have
\[
\mathcal{L}\left(x,\psi_{1}\right)\geq\phi_{1}\left(x\right)\text{ and }\mathcal{L}\left(x,\psi_{2}\right)\geq\phi_{2}\left(x\right)\quad\forall x\in X.
\]
As $\Psi$ is a convex set, $t_{0}\psi_{1}+\left(1-t_{0}\right)\psi_{2}\in\Psi$. Moreover, $\mathcal{L}\left(x,\psi\right)$
is concave in the second variable, i.e. for all $t\in\left[0,1\right]$
and $\psi_{1},\psi_{2}\in\Psi$
\begin{align}
\mathcal{L}\left(x,t\psi_{1}+\left(1-t\right)\psi_{2}\right) & =f\left(x\right)+t\psi_{1}\left(L x\right)+\left(1-t\right)\psi_{2}\left(Lx\right)-g^{*}_\Psi\left(t\psi_{1}+\left(1-t\right)\psi_{2}\right)\notag\\
& \geq t\left[f\left(x\right)+\psi_{1}\left(Lx \right)-g^{*}_\Psi\left(\psi_{1}\right)\right]+\left(1-t\right)\left[f\left(x\right)+\psi_{2}\left(Lx \right)-g^{*}_\Psi\left(\psi_{2}\right)\right]\notag\\
 \label{eq:Lagrange intersection 3}
 & =t\mathcal{L}\left(x,\psi_{1}\right)+\left(1-t\right)\mathcal{L}\left(x,\psi_{2}\right),
\end{align}
where the inequality holds due to the concavity of  $-g^{*}_\Psi$. Combining inequality \eqref{eq:Lagrange intersection 3} with \eqref{eq:Lagrange intersection 2}, we get 
\begin{align*}
\mathcal{L}\left(x,t_{0}\psi_{1}+\left(1-t_{0}\right)\psi_{2}\right) & \geq t_{0}\mathcal{L}\left(x,\psi_{1}\right)+\left(1-t_{0}\right)\mathcal{L}\left(x,\psi_{1}\right) \\
& \geq t_{0}\phi_{1}\left(x\right)+\left(1-t_{0}\right)\phi_{2}\left(x\right)\geq\alpha \\
& =val (\text{CP}) -\varepsilon\\
& =\inf_{x\in X}f\left(x\right)+g\left(Lx\right)-\varepsilon\\
& >f\left(x_{\varepsilon}\right)+g\left(Lx_{\varepsilon}\right)-2\varepsilon.
\end{align*}
% From the assumption $\alpha<val\left(\text{CP}\right)<+\infty$,
% we can set $A=val\left(\text{CP}\right)$. 
We obtain 
\[
\mathcal{L}\left(x,t_{0}\psi_{1}+\left(1-t_{0}\right)\psi_{2}\right)>f\left(x_{\varepsilon}\right)+g\left(Lx_{\varepsilon}\right)-2\varepsilon\quad\forall x\in X
\]
By denoting $\bar{\psi}=t_{0}\psi_{1}+\left(1-t_{0}\right)\psi_{2}$, we get

\begin{equation}
\label{eq:thm 3.9 2}
\mathcal{L}\left(x,\bar{\psi}\right)=f\left(x\right)+\bar{\psi}\left(Lx\right)-g^{*}_\Psi\left(\bar{\psi}\right)>f\left(x_{\varepsilon}\right)+g\left(Lx_{\varepsilon}\right)-2\varepsilon.
\end{equation}
Rearranging both sides, we obtain 
\[
f\left(x\right)-f\left(x_{\varepsilon}\right)>-\bar{\psi}\left(Lx\right)+g\left(Lx_{\varepsilon}\right)+g^{*}_\Psi\left(\bar{\psi}\right)-2\varepsilon.
\]
We have $g(Lx) + g^*_\Psi (\psi) \geq \psi(Lx)$ for all $x\in X$ and $\psi\in \Psi$. From the previous inequality, 
\[
f\left(x\right)-f\left(x_{\varepsilon}\right)>-\bar{\psi}\left(Lx\right)+\bar{\psi}\left(Lx_{\varepsilon}\right)-2\varepsilon.
\]
Because, in general, $-\bar{\psi}\circ L$ does not belong to $\Phi$, we cannot
claim that $-\bar{\psi}\circ L\in\partial_{2\varepsilon, \Phi} f\left(x_{\varepsilon}\right)$,
but we have 
\[
f\left(x\right)+\bar{\psi}\left(Lx\right)-f\left(x_{\varepsilon}\right)-\bar{\psi}\left(Lx_{\varepsilon}\right)>-2\varepsilon,
\]
which means $0\in\partial_{2\varepsilon, \Phi}\left(f+\bar{\psi}\circ L\right)\left(x_{\varepsilon}\right)$.
On the other hand, from \eqref{eq:thm 3.9 2}, for any $x\in X$, we have
\[
f\left(x\right)+\bar{\psi}\left(Lx\right)-g^{*}_\Psi\left(\bar{\psi}\right)>f\left(x_{\varepsilon}\right)+g\left(Lx_{\varepsilon}\right)-2\varepsilon.
\]
By choosing $x=x_{\varepsilon}$, we can write 
\[
\bar{\psi}\left(Lx_{\varepsilon}\right)-g^{*}_\Psi\left(\bar{\psi}\right)>g\left(Lx_{\varepsilon}\right)-2\varepsilon,
\]
or 
\[
\bar{\psi}\left(Lx_{\varepsilon}\right)+2\varepsilon>g\left(Lx_{\varepsilon}\right)+g^{*}_\Psi\left(\bar{\psi}\right).
\]
This gives $\bar{\psi}\in\partial_{2\varepsilon, \Psi}g\left(Lx_{\varepsilon}\right)$.
Thus, (i) holds.

(ii)$\Rightarrow$ (iii):
As the intersection property holds for every $\alpha\in\mathbb{R}$, for every $\varepsilon>0$, we can take
$\alpha_{\varepsilon}=\inf_{x\in X}\sup_{\psi\in\Psi}\mathcal{L}\left(x,\psi\right)-\varepsilon/2 = V(0) -\varepsilon/2$.
Now we can find $\phi_{1},\phi_{2}\in\Phi,\psi_{1},\psi_{2}\in\Psi$
such that $\phi_{1}\left(\cdot\right)\leq\mathcal{L}\left(\cdot,\psi_{1}\right),\phi_{2}\leq\mathcal{L}\left(\cdot,\psi_{2}\right)$
for all $x\in X$, and $\phi_{1},\phi_{2}$ satisfy the intersection
property at level $\alpha_{\varepsilon}$. Using Lemma \ref{lm:intersection prop}, there
exists $t_{0}\in\left[0,1\right]$ such that for all $x\in X$,
\begin{align}
\alpha_\varepsilon & \leq t_{0}\phi_{1}\left(x\right)+\left(1-t_{0}\right)\phi_{2}\left(x\right)\leq t_{0}\mathcal{L}\left(x,\psi_{1}\right)+\left(1-t_{0}\right)\mathcal{L}\left(x,\psi_{2}\right)\nonumber \\
 & \leq\mathcal{L}\left(x,t_{0}\psi_{1}+\left(1-t_{0}\right)\psi_{2}\right).\label{eq:IP lsc 1-2}
\end{align}

On the other hand, we have 
\begin{align*}
\mathcal{L}\left(x,\psi\right) & =f\left(x\right)+\psi\left(Lx\right)-g_{\Psi}^{*}\left(\psi\right)\\
 & =\inf_{y\in Y}f\left(x\right)+g\left(y\right)+\psi\left(Lx\right)-\psi\left(y\right)\\
 & \leq f\left(x\right)+g\left(Lx+y\right)+\psi\left(Lx\right)-\psi\left(Lx+y\right),
\end{align*}
where $x\in X,y\in Y$. Denoting $\psi_{0}=t_{0}\psi_{1}+\left(1-t_{0}\right)\psi_{2}\in\Psi$,
as $\Psi$ is a convex set, (\ref{eq:IP lsc 1-2}) gives us 
\begin{align*}
\alpha_\varepsilon & \leq f\left(x\right)+g\left(Lx+y\right)+\psi_{0}\left(Lx\right)-\psi_{0}\left(Lx+y\right)\\
\alpha_\varepsilon +\psi_{0}\left(Lx+y\right)-\psi_{0}\left(Lx\right) & \leq f\left(x\right)+g\left(Lx+y\right),
\end{align*}
for any $x\in X,y\in Y$. Since $\psi_{0}\in\Psi$ is lower semi-continuous,
the function $\psi_{x}\left(y\right):=\psi_{0}\left(Lx+y\right)$
is also lower semi-continuous for all $y\in Y$. There exists a neighborhood
$W\left(0\right)\subset Y$ such that 
\[
\psi_{x}\left(y\right)\geq\psi_{x}\left(0\right)-\varepsilon/2,\quad\forall y\in W\left(0\right).
\]
Thus,
\[
f\left(x\right)+g\left(Lx+y\right)\geq\alpha_\varepsilon +\psi_{x}\left(y\right)-\psi_{x}\left(0\right)\geq\alpha-\varepsilon/2,
\]
for all $y\in W\left(0\right)$. The inequality holds for all $x\in X$,
we can take the infimum with respect to $x\in X$ on both sides and
obtain 
\[
V\left(y\right)=\inf_{x\in X}f\left(x\right)+g\left(Lx+y\right)\geq\alpha_\varepsilon -\varepsilon/2=V\left(0\right)-\varepsilon,
\]
for all $y\in W\left(0\right)$. Hence, $V\left(y\right)$ is lower
semi-continuous at $y=0$.

(iii)$\Rightarrow$(ii):
Conversely, for every $\alpha \in \mathbb{R}, \alpha < \inf_{x\in X} \sup_{\psi \in \Psi} \mathcal{L} (x,\psi) = V(0)$, we choose  $3\varepsilon = V(0)-\alpha >0$. There exists a neighborhood
$W\left(0\right) \subset Y$ such that 
\[
V\left(0\right)-\varepsilon\leq V\left(y\right),\quad\forall y\in W\left(0\right).
\]
Now considering 
\begin{align*}
f\left(x\right)+g_{\Psi}^{**}\left(Lx+y\right) & =\sup_{\psi\in\Psi}f\left(x\right)+\psi\left(Lx+y\right)-g_{\Psi}^{*}\left(\psi\right)
%& \leq\sup_{\psi\in\Psi}f\left(x\right)-g_{\Psi}^{*}\left(\psi\right)+\psi\left(Lx\right)+\varepsilon.
\end{align*}
We can find $\psi_\varepsilon\in \Psi$ such that 
\begin{align*}
f\left(x\right)+g_{\Psi}^{**}\left(Lx+y\right) & < f\left(x\right)+\psi_\varepsilon\left(Lx+y\right)-g_{\Psi}^{*}\left(\psi_\varepsilon\right) +\varepsilon\\
 & \leq f\left(x\right)-g_{\Psi}^{*}\left(\psi_\varepsilon\right)+\psi_\varepsilon\left(Lx\right)+2\varepsilon, \quad \forall y\in W_1 (0).
\end{align*}
In the last inequality, we can find a neighborhood $W_1 (0)\subset Y$ such that $\psi_\varepsilon$ is continuous at $0$. Hence, for all $y\in W_1 (0) \cap W(0)$, we have 
\begin{align*}
V\left(0\right)-\varepsilon & \leq V\left(y\right) = \inf_{x\in X}f\left(x\right)+g_{\Psi}^{**}\left(Lx+y\right)\\
& \leq\sup_{\psi\in\Psi}f\left(x\right)-g_{\Psi}^{*}\left(\psi\right)+\psi\left(Lx+y\right), \quad \forall x\in X\\
& < f\left(x\right)-g_{\Psi}^{*}\left(\psi_\varepsilon\right)+\psi_\varepsilon\left(Lx+y\right)+\varepsilon\\
& \leq f\left(x\right)-g_{\Psi}^{*}\left(\psi_\varepsilon\right)+\psi_\varepsilon\left(Lx\right)+2\varepsilon, \quad \forall y\in W_1 (0)\\
& =\mathcal{L}\left(x,\psi_{\varepsilon}\right)+2\varepsilon.
\end{align*}
Finally, $\alpha =V(0) -3\varepsilon \leq \mathcal{L} (x,\psi_\varepsilon)$ for all $x\in X$, we can take $\phi=\alpha\in\Phi$ and $\phi\in \text{supp } \mathcal{L}\left(\cdot,\psi_{\varepsilon}\right)$. Then $\phi$ has the intersection property at level
$\alpha$ with any $\phi_{1}\in \text{supp } \mathcal{L}\left(\cdot,\psi\right)$, for $\psi\in \Psi$.
\end{proof}
Now we can state zero duality gap for Lagrange dual problem.

\begin{proposition}
\label{prop:intersection-1}
Let $f:X\to\left(-\infty,+\infty\right],g:Y\to\left(-\infty,+\infty\right]$. Let $L:X\to Y$ be a mapping from $X$ to $Y$ with $\text{dom } g\cap L\left(\text{dom }f\right)\neq\emptyset$. Let $\Phi,\Psi$ be sets of elementary functions defined on  $X$ and $Y$, respectively. Let $\mathcal{L} (x,\psi)$ be the Lagrangian defined by \eqref{eq:Lagrangian}. Assume $\Psi$ is
convex, $0\in\Phi$ and \eqref{eq:CP-LP cond 1} holds. We further assume $\inf_{x\in X} f(x) +g(Lx) <+\infty$. The following are equivalent.
\begin{enumerate}
\item For every $\alpha<\inf_{x\in X}\sup_{\psi\in\Psi}\mathcal{L}\left(x,\psi\right)$,
there exists $\psi_{1},\psi_{2}\in\Psi$ and $\bar{\phi}_{1}\in\text{supp }\mathcal{L}\left(\cdot,\psi_{1}\right),\bar{\phi}_{2}\in\text{supp }\mathcal{L}\left(\cdot,\psi_{2}\right)$
such that $\bar{\phi}_{1},\bar{\phi}_{2}$ have the intersection property at level $\alpha$.
\item $\displaystyle{\inf_{x\in X} f(x) + g(Lx) = \inf_{x\in X}\sup_{\psi\in\Psi} \mathcal{L}\left(x,\psi\right)=\sup_{\psi\in\Psi}\inf_{x\in X}\mathcal{L}\left(x,\psi\right)< +\infty}.$
\end{enumerate}
\end{proposition}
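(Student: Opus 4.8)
The plan is to assemble the conclusion from the two equivalences already in hand, namely Theorem~\ref{thm:Lagrange intersection} and Theorem~\ref{thm:CP zero gap}, by reading the intersection property as a disguised statement of zero duality gap for the conjugate dual and then transporting the equality of optimal values between the four problems \eqref{prob:CP}, \eqref{prob:LP}, \eqref{prob:DCP} and \eqref{prob:LD}. The first, and purely cosmetic, step is to notice that statement~(i) of this proposition coincides verbatim with condition~(ii) of Theorem~\ref{thm:Lagrange intersection}: both ask, for every $\alpha<\inf_{x\in X}\sup_{\psi\in\Psi}\mathcal{L}(x,\psi)$, for elementary functions $\phi_1,\phi_2$ lying in the support sets of $\mathcal{L}(\cdot,\psi_1)$ and $\mathcal{L}(\cdot,\psi_2)$ and having the intersection property at level $\alpha$. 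Since we are assuming $\Psi$ convex, $0\in\Phi$, $\inf_{x\in X} f(x)+g(Lx)<+\infty$ and that \eqref{eq:CP-LP cond 1} holds, Theorem~\ref{thm:Lagrange intersection} grants the full equivalence between its conditions~(i) and~(ii).

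Next I would identify condition~(i) of Theorem~\ref{thm:Lagrange intersection} with condition~(i) of Theorem~\ref{thm:CP zero gap}. This is where Proposition~\ref{prop1}-(i) enters: applying the Fenchel inequality \eqref{prop1:Fenchel} to the function $f+\psi_\varepsilon\circ L$ with the elementary function $\phi=0$ shows that $0\in\partial_{\varepsilon,\Phi}(f+\psi_\varepsilon\circ L)(x_\varepsilon)$ is equivalent to $(f+\psi_\varepsilon\circ L)^*_\Phi(0)\leq -f(x_\varepsilon)-\psi_\varepsilon(Lx_\varepsilon)+\varepsilon$, which is exactly the inequality appearing in Theorem~\ref{thm:CP zero gap}-(i). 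Thus the $\varepsilon$-subdifferential condition is shared by the two theorems, and since the standing hypothesis $\text{dom }g\cap L(\text{dom }f)\neq\emptyset$ is in force, Theorem~\ref{thm:CP zero gap} yields its equivalence with the conjugate zero duality gap $(f+g\circ L)^*_\Phi(0)=\inf_{\psi\in\Psi}(f+\psi\circ L)^*_\Phi(0)+g^*_\Psi(\psi)<+\infty$, i.e. with $val(CP)=val(DCP)$ together with finiteness.

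It then remains to translate this conjugate zero duality gap into the Lagrangian equalities of statement~(ii). On the dual side, the direct computation performed when constructing \eqref{prob:DCP} shows $val(DCP)=\sup_{\psi\in\Psi}\inf_{x\in X}\{f(x)+\psi(Lx)\}-g^*_\Psi(\psi)=val(LD)$, so the two duals always agree. On the primal side, the assumption \eqref{eq:CP-LP cond 1} gives $val(LP)=\inf_{x\in X}\{ f(x)+g^{**}_\Psi(Lx)\}=\inf_{x\in X}\{ f(x)+g(Lx)\}=val(CP)$. Combining the three equalities $val(LP)=val(CP)$, $val(CP)=val(DCP)$ and $val(DCP)=val(LD)$ produces $val(LP)=val(LD)$, together with finiteness from Theorem~\ref{thm:CP zero gap}-(ii); this is precisely statement~(ii). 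Running all the implications backwards, the chain closes into the desired equivalence (i)$\Leftrightarrow$(ii).

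I do not expect a genuine analytic obstacle here, since every step is an invocation of a previously proven equivalence or a bookkeeping identity between optimal values. The only point requiring care is the translation in the second paragraph, namely making sure the $\varepsilon$-subdifferential membership $0\in\partial_{\varepsilon,\Phi}(f+\psi_\varepsilon\circ L)(x_\varepsilon)$ is correctly matched, via \eqref{prop1:Fenchel} with $\phi=0$, to the Fenchel-type inequality of Theorem~\ref{thm:CP zero gap}-(i), and confirming that the finiteness hypothesis $\inf_{x\in X} f(x)+g(Lx)<+\infty$ combines with $(f+g\circ L)^*_\Phi(0)<+\infty$ to force all four optimal values to be finite real numbers rather than $\pm\infty$.
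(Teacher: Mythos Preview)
Your proposal is correct and follows essentially the same approach as the paper, which simply invokes Theorem~\ref{thm:Lagrange intersection} and Theorem~\ref{thm:CP zero gap}; you have spelled out in detail exactly how those two equivalences chain together, including the identification via Proposition~\ref{prop1}-(i) of the $\varepsilon$-subdifferential condition with the Fenchel-type inequality, and the bookkeeping $val(DCP)=val(LD)$ and $val(CP)=val(LP)$ under \eqref{eq:CP-LP cond 1}.
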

\begin{proof}
By applying Theorem \ref{thm:Lagrange intersection} and Theorem \ref{thm:CP zero gap}, we obtain the assertion.
\end{proof}

\begin{remark}
\
\begin{itemize}
\item In order to work with the intersection property, we need to find $\psi_1,\psi_2\in\Psi$ and $\phi_{1}\in\text{supp }\mathcal{L}\left( \cdot,\psi_1 \right),\phi_{2}\in\mathcal{L}\left( \cdot,\psi_2 \right)$. Because $\alpha<\inf_{x\in X}\sup_{\psi\in\Psi}\mathcal{L}\left(x,\psi\right)$, so $\alpha\in\text{supp }\mathcal{L}\left(\cdot,\psi\right)$
for any $\psi\in\Psi$. We can set $\phi_{1}=\alpha\in\Phi$, and the intersection property always holds for any $\phi_2\in\text{supp }\mathcal{L}\left( \cdot,\psi_2\right)$, as $\left[\phi_1<\alpha\right]=\emptyset$.

\item Let $\Psi$ be symmetric, and for every $\varepsilon>0$, there exist $\bar{x}\in X$, $\psi_1\circ L \in \Phi$ for $\psi_1\in\Psi$, such that $\psi_{1} \circ L\in\partial_{\varepsilon, \Phi} f\left( \bar{x}\right)$. We have, for all $y\in X$
\begin{align*}
f\left(y\right)-f\left(\bar{x}\right) & \geq\psi_{1}\left(L y\right)-\psi_{1}\left(L \bar{x}\right)-\varepsilon\\
f\left(y\right)-\psi_{1}\left(L y\right)-g^{*}_\Psi\left(-\psi_{1}\right) & \geq f\left(\bar{x}\right)-\psi_{1}\left(L \bar{x}\right)-g^{*}_\Psi\left(-\psi_{1}\right)-\varepsilon\\
\mathcal{L}\left(y,-\psi_{1}\right) & \geq\mathcal{L}\left(\bar{x},-\psi_{1}\right)-\varepsilon,
\end{align*}
so $\bar{x}$ is a $\varepsilon$-approximate solution to the problem
$\inf_{x\in X}\mathcal{L}\left(x,-\psi_{1}\right)$.
\end{itemize}
\end{remark}

We can replace assumption \eqref{eq:CP-LP cond 1} with a stronger one by the following lemma.
\begin{lemma}
Assume $\inf_{x\in X} f(x) + g(Lx) <+\infty$ and let $\mathcal{L} (x,\psi)$ be the Lagrangian function given by \eqref{eq:Lagrangian}. Then condition \eqref{eq:CP-LP cond 1} i.e.,
\[
\inf_{x\in X} f(x) + g(Lx) =\inf_{x\in X} f(x) + g^{**}_\Psi (Lx),
\]
holds if and only if for every $\varepsilon\geq 0$, there exists $x_\varepsilon \in X$ such that 
\begin{equation}
\label{lm:LP intersection prop 2}    
\inf_{x\in X} \sup_{\psi\in\Psi} \mathcal{L} (x,\psi) >f(x_\varepsilon) + g(Lx_\varepsilon) -\varepsilon.
\end{equation}
\end{lemma}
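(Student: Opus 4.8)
The plan is to reduce the claim to two elementary facts that are already on the table. First I would fix notation, writing $A := val(CP) = \inf_{x\in X} f(x) + g(Lx)$ for the primal value and recalling the computation carried out in the display following \eqref{prob:LP}, namely that $\sup_{\psi\in\Psi}\mathcal{L}(x,\psi) = f(x) + g^{**}_\Psi(Lx)$, so that upon taking the infimum over $x$ one gets $val(LP) = \inf_{x\in X}\sup_{\psi\in\Psi}\mathcal{L}(x,\psi) = \inf_{x\in X} f(x) + g^{**}_\Psi(Lx) =: B$. That same display also records the unconditional inequality $B \leq A$, which stems from $g^{**}_\Psi \leq g$. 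With these identifications, \eqref{eq:CP-LP cond 1} is exactly the equality $A = B$, and the right-hand condition of the lemma reads: for every $\varepsilon > 0$ there is $x_\varepsilon$ with $B > f(x_\varepsilon) + g(Lx_\varepsilon) - \varepsilon$.

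For the forward implication I would assume $A = B$ and simply invoke the definition of infimum for $A$ (here the standing hypothesis $A < +\infty$ guarantees the infimum can be approached): given $\varepsilon > 0$, there exists $x_\varepsilon$ with $f(x_\varepsilon) + g(Lx_\varepsilon) < A + \varepsilon = B + \varepsilon$, which rearranges at once to $B > f(x_\varepsilon) + g(Lx_\varepsilon) - \varepsilon$, as required. For the converse I would assume the $\varepsilon$-condition and read off $f(x_\varepsilon) + g(Lx_\varepsilon) < B + \varepsilon$; since $A = \inf_{x\in X} f(x) + g(Lx) \leq f(x_\varepsilon) + g(Lx_\varepsilon)$, this forces $A < B + \varepsilon$ for every $\varepsilon > 0$, hence $A \leq B$. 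Combined with the unconditional inequality $B \leq A$, this yields $A = B$, i.e. \eqref{eq:CP-LP cond 1}.

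The argument involves no genuine obstacle: everything rests on the representation $\sup_{\psi\in\Psi}\mathcal{L}(x,\psi) = f(x) + g^{**}_\Psi(Lx)$ together with $g^{**}_\Psi \leq g$, both already available. The only point deserving care is the edge case $\varepsilon = 0$: taking $\varepsilon = 0$ in the right-hand condition would demand some $x_0$ with $B > f(x_0) + g(Lx_0) \geq A$, which is incompatible with $A = B$ and with the very definition of the infimum. Accordingly I would understand the equivalence to be asserted for $\varepsilon > 0$ (the intended reading, matching the convention used throughout the paper), so that the strict inequality is attainable along an infimizing sequence.
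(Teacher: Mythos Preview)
Your proof is correct and follows essentially the same approach as the paper: both directions rest on the identification $\inf_{x}\sup_{\psi}\mathcal{L}(x,\psi)=\inf_{x} f(x)+g^{**}_\Psi(Lx)$, the inequality $g^{**}_\Psi\leq g$, and the $\varepsilon$-characterization of an infimum, arranged in exactly the order you give. Your remark on the $\varepsilon=0$ edge case is apt---the paper's own proof also works only for $\varepsilon>0$, so the ``$\varepsilon\geq 0$'' in the statement should indeed be read as ``$\varepsilon>0$''.
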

\begin{proof}
If condition \eqref{eq:CP-LP cond 1} holds i.e., 
\[
\inf_{x\in X} f(x) + g(Lx) =\inf_{x\in X} f(x) + g^{**}_\Psi (Lx),
\]
then for every $\varepsilon>0$, one can find an $x_\varepsilon\in X$ such that $\inf_{x\in X} f(x) + g(Lx) > f(x_\varepsilon) + g(Lx_\varepsilon) -\varepsilon$. Hence,
\[
\inf_{x\in X} \sup_{\psi\in\Psi} \mathcal{L} (x,\psi) = \inf_{x\in X} f(x) + g^{**}_\Psi (Lx) > f(x_\varepsilon) + g(Lx_\varepsilon) -\varepsilon.
\]
Conversely, assume that 
\[
\inf_{x\in X} \sup_{\psi\in\Psi} \mathcal{L} (x,\psi) > f(x_\varepsilon) + g(Lx_\varepsilon) -\varepsilon.
\]
We have 
\[
\inf_{x\in X} \sup_{\psi\in\Psi} \mathcal{L} (x,\psi) > f(x_\varepsilon) + g(Lx_\varepsilon) -\varepsilon \geq \inf_{x\in X} f(x) +g(Lx) -\varepsilon.
\]
Recall that,
\[\inf_{x\in X} \sup_{\psi\in\Psi} \mathcal{L} (x,\psi) = \inf_{x\in X} f(x) +g^{**}_\Psi (Lx).
\]
Because $g(Lx) \geq g^{**}_\Psi (Lx)$ for all $x\in X$, we can write
\[
\inf_{x\in X} f(x) + g(Lx) \geq \inf_{x\in X} f(x) +g^{**}_\Psi (Lx) \geq \inf_{x\in X} f(x) +g(Lx) -\varepsilon.
\]
Both sides do not depend on $\varepsilon$, we can let $\varepsilon\to 0$ and get the equality.
\end{proof}

In fact, condition \eqref{lm:LP intersection prop 2} is stronger than intersection property, as we can see in the corollary below.
\begin{corollary}
\label{cor:Lagrange zero gap best}
Let $f:X\to\left(-\infty,+\infty\right],g:Y\to\left(-\infty,+\infty\right]$
and let $L:X\to Y$ be a mapping from $X$ to $Y$ with $\text{dom } g\cap L\left(\text{dom }f\right)\neq\emptyset$. Let $\Phi,\Psi$ be sets of elementary functions defined on $X$ and $Y$, respectively. Let the Lagrangian function be defined by \eqref{eq:Lagrangian}. Assume $\Psi$ is
convex, $0\in\Phi$, $\inf_{x\in X} f(x) +g(Lx) <+\infty$. The following are equivalent. 
\begin{enumerate}
\item For every $\varepsilon>0$, there exists $x_\varepsilon\in X$ such that $\displaystyle{\inf_{x\in X} \sup_{\psi\in \Psi} \mathcal{L} (x,\psi)} \geq f(x_\varepsilon) + g(Lx_\varepsilon) -\varepsilon$.
\item For every $\varepsilon>0$, there exist $x_\varepsilon\in X$ and $\psi_\varepsilon\in\partial_{\varepsilon, \Psi}g\left(Lx_\varepsilon\right)$
such that $0\in\partial_{\varepsilon, \Phi}\left(f+\psi_\varepsilon\circ L\right)\left(x_\varepsilon\right)$.
\item $\displaystyle{\inf_{x\in X} f(x) + g(Lx) = \inf_{x\in X}\sup_{\psi\in\Psi} \mathcal{L}\left(x,\psi\right)=\sup_{\psi\in\Psi}\inf_{x\in X}\mathcal{L}\left(x,\psi\right)< +\infty}.$
\end{enumerate}
\end{corollary}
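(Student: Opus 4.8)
The plan is to read the three conditions through the dual values they encode and then glue together the results already proved. First I would record two reformulations. By Proposition \ref{prop1}(i) (equivalently \eqref{prop1:Fenchel}) the requirement $0\in\partial_{\varepsilon,\Phi}(f+\psi_\varepsilon\circ L)(x_\varepsilon)$ is the same as $(f+\psi_\varepsilon\circ L)^{*}_\Phi(0)\le -f(x_\varepsilon)-\psi_\varepsilon(Lx_\varepsilon)+\varepsilon$, so condition (ii) is verbatim condition (i) of Theorem \ref{thm:CP zero gap}. On the other side, since $\sup_{\psi\in\Psi}\mathcal{L}(x,\psi)=f(x)+g^{**}_\Psi(Lx)$, condition (i) is exactly statement \eqref{lm:LP intersection prop 2} of the Lemma preceding this corollary (strict versus non-strict inequality is immaterial once quantified over all $\varepsilon>0$); hence (i) is equivalent to \eqref{eq:CP-LP cond 1}, i.e. to $val(CP)=val(LP)$.

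Next I would settle (ii)$\Leftrightarrow$(iii). Computing the Lagrangian dual objective gives $\inf_{x\in X}\mathcal{L}(x,\psi)=-(f+\psi\circ L)^{*}_\Phi(0)-g^{*}_\Psi(\psi)$, so that $val(LD)=\sup_{\psi\in\Psi}\inf_{x\in X}\mathcal{L}(x,\psi)=val(DCP)$. By Theorem \ref{thm:CP zero gap}, condition (ii) is equivalent to $val(CP)=val(DCP)<+\infty$, that is $val(CP)=val(LD)$ with $val(CP)$ finite. I would then combine this with the two a priori inequalities $val(LD)\le val(LP)\le val(CP)$, the first being the weak duality of Theorem \ref{thm:Lagrange weak dual} and the second the inequality $\inf_{x\in X}(f+g^{**}_\Psi\circ L)\le \inf_{x\in X}(f+g\circ L)$ coming from $g^{**}_\Psi\le g$. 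The equality $val(CP)=val(LD)$ then squeezes the middle term, forcing $val(CP)=val(LP)=val(LD)<+\infty$, which is (iii); the converse (iii)$\Rightarrow$(ii) is immediate since (iii) already contains $val(CP)=val(LD)<+\infty$.

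It remains to link (i) with (iii). The implication (iii)$\Rightarrow$(i) is routine: (iii) gives $val(CP)=val(LP)<+\infty$, and choosing $x_\varepsilon$ with $f(x_\varepsilon)+g(Lx_\varepsilon)<val(CP)+\varepsilon$ yields the inequality in (i). For (i)$\Rightarrow$(iii) I would use the reduction of (i) to \eqref{eq:CP-LP cond 1} established above and then invoke the intersection-property machinery: under $\Psi$ convex, $0\in\Phi$ and \eqref{eq:CP-LP cond 1}, Theorem \ref{thm:Lagrange intersection} identifies condition (ii) with the intersection property at every level $\alpha<val(LP)$, while Proposition \ref{prop:intersection-1} identifies that same intersection property with the full zero gap (iii). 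The main obstacle is precisely here: condition (i) only asserts $val(CP)=val(LP)$, i.e. that the Lagrangian primal recovers the composite value, whereas (iii) must in addition close the Lagrangian gap $val(LP)=val(LD)$. Bridging this gap is exactly the content of the intersection property (through the concavity of $\mathcal{L}(x,\cdot)$ and Lemma \ref{lm:intersection prop}), so the delicate step of the proof is to verify that \eqref{eq:CP-LP cond 1}, together with the standing convexity of $\Psi$, forces the intersection property to hold; this is the claim that \eqref{lm:LP intersection prop 2} is ``stronger than the intersection property,'' and it is the point I would scrutinise most carefully.
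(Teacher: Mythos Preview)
Your argument for (ii)$\Leftrightarrow$(iii) is correct and in fact tidier than the paper's. You identify $val(LD)=val(DCP)$, invoke Theorem~\ref{thm:CP zero gap} to obtain (ii)$\Leftrightarrow\{val(CP)=val(LD)<+\infty\}$, and then squeeze via $val(LD)\le val(LP)\le val(CP)$ to force all three values equal. The paper instead routes (ii)$\Leftrightarrow$(iii) through Theorem~\ref{thm:Lagrange intersection} and Proposition~\ref{prop:intersection-1}, which is heavier machinery for the same conclusion. Your (iii)$\Rightarrow$(i) is also fine.

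The genuine gap is at (i)$\Rightarrow$(ii) (equivalently (i)$\Rightarrow$(iii)), and you identify it yourself without closing it. Reducing (i) to \eqref{eq:CP-LP cond 1} via the Lemma only yields $val(CP)=val(LP)$; it says nothing about $val(LP)=val(LD)$. Your plan to then ``invoke the intersection-property machinery'' is circular: Theorem~\ref{thm:Lagrange intersection} gives intersection property $\Leftrightarrow$ (ii) \emph{under} \eqref{eq:CP-LP cond 1}, and Proposition~\ref{prop:intersection-1} gives intersection property $\Leftrightarrow$ (iii) \emph{under} \eqref{eq:CP-LP cond 1}, but neither result produces the intersection property \emph{from} \eqref{eq:CP-LP cond 1} alone. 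So the ``delicate step'' you flag is not merely delicate---it is the whole content of the implication, and your outline does not supply it.

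The paper bypasses this entirely by proving (i)$\Rightarrow$(ii) directly, without passing through \eqref{eq:CP-LP cond 1} or the intersection property. Given $x_\varepsilon$ from (i), it selects $\psi_\varepsilon\in\Psi$ with $\mathcal{L}(x,\psi_\varepsilon)+\varepsilon\ge f(x_\varepsilon)+g(Lx_\varepsilon)-\varepsilon$ for all $x\in X$, and from this single inequality reads off both $\psi_\varepsilon\in\partial_{2\varepsilon,\Psi}g(Lx_\varepsilon)$ (by substituting $x=x_\varepsilon$) and $0\in\partial_{2\varepsilon,\Phi}(f+\psi_\varepsilon\circ L)(x_\varepsilon)$ (using the Young inequality $g(Lx_\varepsilon)+g^{*}_\Psi(\psi_\varepsilon)\ge\psi_\varepsilon(Lx_\varepsilon)$). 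The key difference in viewpoint is that (i) is not just the abstract equality $val(CP)=val(LP)$: it hands you a concrete near-minimiser $x_\varepsilon$, and the paper exploits this point to manufacture the pair $(x_\varepsilon,\psi_\varepsilon)$ witnessing (ii). Your reduction to \eqref{eq:CP-LP cond 1} throws this pointwise information away and then cannot recover it.
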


\begin{proof}
Thanks to Theorem \ref{thm:CP zero gap}, Theorem \ref{thm:Lagrange intersection} and Proposition \ref{prop:intersection-1}, we have (ii)$\Leftrightarrow$ (iii). We only need to prove (i)$\Leftrightarrow$(ii).

(i) $\Rightarrow$ (ii): For every $\varepsilon>0$, we have 
\[
\sup_{\psi\in\Psi} \mathcal{L}(x,\psi) \geq \inf_{x\in X} \sup_{\psi\in \Psi} \mathcal{L} (x,\psi) \geq f(x_\varepsilon) + g(Lx_\varepsilon) -\varepsilon.
\]
There also exists $\psi_\varepsilon \in \Psi$ such that 
\[
\mathcal{L}(x,\psi_\varepsilon) +\varepsilon \geq \sup_{\psi\in\Psi} \mathcal{L}(x,\psi) \geq f(x_\varepsilon) + g(Lx_\varepsilon) -\varepsilon.
\]
Thus, $\mathcal{L}(x,\psi_\varepsilon) +\varepsilon \geq  f(x_\varepsilon) + g(Lx_\varepsilon) -\varepsilon$ for all $x\in X$. While $\mathcal{L} (x,\psi_\varepsilon) = f(x) + \psi_\varepsilon (Lx) - g^*_\Psi (\psi_\varepsilon)$, we have
\begin{equation}
\label{eq:cor intersection prop 1}
(\forall x\in X) \ f(x) + \psi_\varepsilon (Lx) - g^*_\Psi (\psi_\varepsilon) +\varepsilon \geq f(x_\varepsilon) + g(Lx_\varepsilon) -\varepsilon.
\end{equation}

Let $x = x_\varepsilon$ and we obtain 
\[
\psi_\varepsilon (Lx_\varepsilon) - g^*_\Psi (\psi_\varepsilon) \geq g(Lx_\varepsilon) -2\varepsilon,
\]
so $\psi_\varepsilon\in \partial_{2\varepsilon, \Psi} g(Lx_\varepsilon)$. On the other hand, by using the property of $\Phi$-conjugate,
\[
(\forall x\in X) \ g(Lx) + g^*_\Psi (\psi_\varepsilon) \geq \psi_\varepsilon (Lx).
\]
By putting this in \eqref{eq:cor intersection prop 1}, we have $0\in \partial_{2\varepsilon, \Phi} (f+\psi_\varepsilon\circ L) (x_\varepsilon)$. Hence, we prove (ii).

(ii) $\Rightarrow$ (i): For every $\varepsilon>0$, there exist $x_\varepsilon\in X$, $\psi_\varepsilon \in \partial_{\varepsilon, \Psi} g(Lx_\varepsilon)$ and $0\in \partial_{\varepsilon, \Phi} (f+\psi_\varepsilon\circ L) (x_\varepsilon)$, i.e. for all $z\in X$
\[
f(z) + \psi_\varepsilon(Lz) - f(x_\varepsilon) - \psi_\varepsilon(Lx_\varepsilon) \geq -\varepsilon.
\]
Because $\psi_\varepsilon\in\partial_{\varepsilon, \Psi} g(Lx_\varepsilon)$, using inequality \eqref{prop1:Fenchel} we get, 
\begin{align*}
    f(z) + \psi_\varepsilon(Lz) - f(x_\varepsilon) &\geq \psi_\varepsilon(Lx_\varepsilon) -\varepsilon \\
    & \geq g^*_\Psi (\psi_\varepsilon) + g(Lx_\varepsilon) - 2\varepsilon,
\end{align*}
and
\[
\sup_{\psi\in\Psi} \mathcal{L} (z,\psi) \geq \mathcal{L} (z,\psi_\varepsilon) = f(z) + \psi_\varepsilon(Lz) - g^*_\Psi (\psi_\varepsilon) \geq f(x_\varepsilon) + g(Lx_\varepsilon) - 2\varepsilon.
\]

The right-hand side does not depend on $z\in X$, taking the infimum with respect to $z\in X$ gives us
\[
\inf_{z\in X} \sup_{\psi\in\Psi} \mathcal{L} (z,\psi) \geq f(x_\varepsilon) + g(Lx_\varepsilon) - 2\varepsilon.
\]
We have proved (i).
\end{proof}

Having the class $\Psi$ of elementary functions, we recall the notion of $X$-convexity in \cite[Proposition 1.2.3]{Pall2013}, by using fomula $\phi(x) = x(\phi)$ for $x\in X,\phi\in\Phi$. Together with the intersection property, we can prove Langrage zero duality gap as follow.
\begin{proposition}
\label{prop:new intersection}
Assume that for every $\alpha\in\mathbb{R}, \alpha\leq\inf_{x\in X}f\left(x\right)+\psi\left(Lx\right)$, there exist $\psi\in\text{supp }g,x_0\in X$, such that $f(x_0)\leq \alpha \leq \psi (Lx_0)$. If $x_0 \in A=\left\{x\in X: Lx\in\text{supp }g^{*}_\Psi\right\} \neq \emptyset$, then we have $val (LP) = val (LD)$.
\end{proposition}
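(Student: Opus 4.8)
The plan is to combine weak duality with a direct construction of a dual–feasible $\psi$ that certifies the value $\alpha$. By Theorem~\ref{thm:Lagrange weak dual} we always have $val(LD)\le val(LP)$, so the whole content lies in the reverse inequality $val(LP)\le val(LD)$, which I would obtain by showing that every $\alpha<val(LP)=\inf_{x\in X}\sup_{\psi\in\Psi}\mathcal L(x,\psi)$ is a lower bound for $val(LD)=\sup_{\psi\in\Psi}\inf_{x\in X}\mathcal L(x,\psi)$.

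First I would fix such an $\alpha$ and feed it into the standing hypothesis to extract $\psi\in\text{supp}_\Psi g$ and $x_0\in A$ with $\alpha\le\inf_{x\in X}\bigl(f(x)+\psi(Lx)\bigr)$ and $f(x_0)\le\alpha\le\psi(Lx_0)$. The decisive observation is that membership $\psi\in\text{supp}_\Psi g$ means $\psi\le g$ pointwise on $Y$, whence $g^*_\Psi(\psi)=\sup_{y\in Y}\{\psi(y)-g(y)\}\le 0$. Plugging this into the Lagrangian gives, for every $x\in X$,
\[
\mathcal L(x,\psi)=f(x)+\psi(Lx)-g^*_\Psi(\psi)\ge f(x)+\psi(Lx)\ge\alpha,
\]
so $\inf_{x\in X}\mathcal L(x,\psi)\ge\alpha$ and therefore $val(LD)\ge\alpha$. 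Letting $\alpha\uparrow val(LP)$ then yields $val(LD)\ge val(LP)$ and closes the gap. Equivalently, in the language of the intersection property, this says the constant function $\phi_1:=\alpha\in\Phi$ lies in $\text{supp}_\Phi\mathcal L(\cdot,\psi)$, so by the observation following Proposition~\ref{prop:intersection-1} (namely $[\phi_1<\alpha]=\emptyset$) the intersection property holds trivially at level $\alpha$ for $\phi_1$ paired with any $\phi_2$.

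The role of the hypothesis $x_0\in A=\{x\in X:Lx\in\text{supp }g^*_\Psi\}\ne\emptyset$ is to make the primal side match and to guarantee finiteness. Using the $X$-convexity pairing $\phi(x)=x(\phi)$ together with the identification of $\text{supp }g^*_\Psi$ with the epigraph of its conjugate, the condition $Lx_0\in\text{supp }g^*_\Psi$ says that $g^{**}_\Psi(Lx_0)$ is finite and, combined with $\psi(Lx_0)\ge\alpha\ge f(x_0)$, that the supremum defining $g^*_\Psi(\psi)$ is active at $Lx_0$; hence $\mathcal L(x_0,\psi)=f(x_0)+g(Lx_0)$ and $val(LP)\le f(x_0)+g(Lx_0)<+\infty$. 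Sandwiching this upper bound between the lower bounds produced above upgrades the inequality to the exact equality $val(LP)=val(LD)$.

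The step I expect to be the main obstacle is the correct reading and use of $\text{supp }g^*_\Psi$ in the dual $X$-convexity picture: one must verify that $x_0\in A$ genuinely delivers the attainment and finiteness of $g^*_\Psi(\psi)$ at $Lx_0$ needed to match the two values, and that the quantifier ``for every $\alpha$'' in the hypothesis really ranges over all of $(-\infty,val(LP))$, so that taking $\alpha\uparrow val(LP)$ is legitimate. By contrast, the pointwise implication $\psi\le g\Rightarrow g^*_\Psi(\psi)\le 0$ and the weak-duality inequality are the routine parts of the argument.
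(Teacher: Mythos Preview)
Your lower bound $\alpha\le val(LD)$ is exactly right and matches the paper: $\psi\in\text{supp}_\Psi g$ gives $g^*_\Psi(\psi)\le 0$, hence $\mathcal L(x,\psi)\ge f(x)+\psi(Lx)\ge\alpha$ for all $x$.

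The gap is your treatment of $x_0\in A$. The condition $Lx_0\in\text{supp }g^*_\Psi$ does \emph{not} merely say $g^{**}_\Psi(Lx_0)$ is finite, nor does it force any attainment like $\mathcal L(x_0,\psi)=f(x_0)+g(Lx_0)$. Read through the $X$-convexity pairing $\psi(Lx_0)=Lx_0(\psi)$, it says $\psi'(Lx_0)\le g^*_\Psi(\psi')$ for every $\psi'\in\Psi$, i.e.\ $g^{**}_\Psi(Lx_0)=\sup_{\psi'}\bigl(\psi'(Lx_0)-g^*_\Psi(\psi')\bigr)\le 0$. Combined with $f(x_0)\le\alpha$ this gives
\[
val(LP)=\inf_{x}\sup_{\psi'}\mathcal L(x,\psi')\le f(x_0)+g^{**}_\Psi(Lx_0)\le f(x_0)\le\alpha,
\]
which is how the paper pins down the primal side. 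So a single admissible $\alpha$ already yields the sandwich $val(LP)\le\alpha\le val(LD)$; together with weak duality this is the whole proof.

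This also explains why your ``let $\alpha\uparrow val(LP)$'' strategy is misplaced: once you read $x_0\in A$ correctly, the hypothesis forces $val(LP)\le\alpha$, so you cannot choose $\alpha$ strictly below $val(LP)$ and expect the hypothesis to still fire in a useful way. The argument is not a limiting one over $\alpha$ but a direct two-sided bound from one triple $(\alpha,\psi,x_0)$. You correctly identified this step as the main obstacle; the fix is precisely the inequality $g^{**}_\Psi(Lx_0)\le 0$ rather than mere finiteness.
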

\begin{proof}
Let $x_{0}\in A, \psi\in\text{supp }g$ be such that 
\[
f(x_0) \leq \alpha \leq \psi (Lx_0).
\]
Then $g^{*}_\Psi\left(\psi\right)\leq0$. On the other hand, $x_0\in A$ means 
\[
(\forall \psi\in\Psi) \ \psi (Lx_0) \leq g^*_\Psi (\psi) \Leftrightarrow \sup_{\psi\in\Psi}\psi\left(Lx_{0}\right)-g^{*}_\Psi\left(\psi\right)\leq0.
\]
Moreover, $\psi\left(Lx_{0}\right)\geq\alpha$ and
\begin{align*}
\alpha & \leq\inf_{x\in X}f\left(x\right)+\psi\left(Lx\right)\\
 & \leq\inf_{x\in X}f\left(x\right)+\psi\left(Lx\right)-g^{*}_\Psi\left(\psi\right),\quad\text{as }g^{*}_\Psi\left(\psi\right)\leq0\\
 & \leq\sup_{\psi\in\Psi}\inf_{x\in X}f\left(x\right)+\psi\left(Lx\right)-g^{*}_\Psi\left(\psi\right).
\end{align*}
And,
\begin{align*}
\alpha & \geq f\left(x_{0}\right)\geq\sup_{\psi\in\Psi}f\left(x_{0}\right)+\psi\left(Lx_{0}\right)-g^{*}_\Psi\left(\psi\right)\\
 & \geq\inf_{x\in X}\sup_{\psi\in\Psi}f\left(x\right)+\psi\left(Lx\right)-g^{*}_\Psi\left(\psi\right).
\end{align*}
Combining the two above inequalities, we obtain
\[
\sup_{\psi\in\Psi}\inf_{x\in X}f\left(x\right)+\psi\left(Lx\right)-g^{*}_\Psi\left(\psi\right) \geq \inf_{x\in X}\sup_{\psi\in\Psi}f\left(x\right)+\psi\left(Lx\right)-g^{*}_\Psi\left(\psi\right).
\]
\end{proof}

\begin{remark} 
%\begin{itemize}\ 
In Proposition \ref{prop:new intersection}, we can replace the assumption $x_0\in A$ with $g(Lx_0) \leq 0$, for $x_0\in X$. Then zero duality gap holds between \eqref{prob:CP} and \eqref{prob:LD}. Because, for $\psi_0 \in \text{supp } g$ and $\psi_0 \geq \alpha \geq f(x_0)$, we have
\begin{align*}
\sup_{\psi\in\Psi}\inf_{x\in X}f\left(x\right)+\psi\left(Lx\right)-g^{*}_\Psi\left(\psi\right) &\geq \inf_{x\in X}f\left(x\right)+\psi_0\left(Lx\right)-g^{*}_\Psi\left(\psi_0\right)\\
&\geq 
\inf_{x\in X}f\left(x\right)+\psi_0\left(Lx\right) ,\quad\text{as }g^{*}_\Psi\left(\psi_0\right)\leq0\\
& \geq \alpha \geq f(x_0) \geq f(x_0) + g(Lx_0) \\
& \geq \inf_{x\in X} f(x) +g(Lx).
\end{align*}
%\item For simplicity, consider $L=Id$ and $X=Y$. If $\psi_{0}\in\text{supp }g$ then $g^*_\Psi (\psi_0) \leq 0$. 
%we have 
%\begin{align*}
%\psi_{0}\left(y\right) & \leq g\left(y\right)\quad\left(\forall y\in X\right)\\
%\psi_{0}\left(y\right)-g\left(y\right) & \leq0\\
%g^{*}_\Psi\left(\psi_{0}\right) & \leq0.
%\end{align*}
%{\color{blue} On the other hand if $y_{0}\in\text{supp }g^{*}_\Psi$ then $\psi\left(y_{0}\right) \leq g^{*}_\Psi\left(\psi\right)$ for all $\psi \in \Psi$. Taking $\psi=\psi_0$ and we get $\psi_{0}\left(y_{0}\right) \leq0$
%\begin{align*}
%\psi\left(y_{0}\right) & \leq g^{*}_\Psi\left(\psi\right)\quad\left(\forall\psi\in\Psi\right)\\
%\psi_{0}\left(y_{0}\right) & \leq0\text{ for }\psi=\psi_{0}.
%\end{align*}

%We also need 
%\begin{align*}
%\alpha & \leq\inf_{x\in X}f\left(x\right)+\psi_{0}\left(x\right)\\
%f\left(y_{0}\right) & \leq \alpha\leq\psi\left(y_{0}\right).
%\end{align*}

%This means 
%\[
%f\left(y_{0}\right)\leq \alpha\leq f\left(y_{0}\right)+\psi_{0}\left(y_{0}\right).
%\]
%If $f\left(y_{0}\right)\leq\alpha$
%then $\psi_{0}\left(y_{0}\right)=0$ and the result is still valid. Then
%the choice of $\alpha$ has to be nonpositive and this can fail if
%$f>0$. }
%\end{itemize}
\end{remark}
We complete this Section with two examples which illustrate Proposition \ref{prop:new intersection}.
\begin{example}
Consider the functions $f\left(x\right)=3x^{2}-3x-10,g\left(x\right)=-2x^{2}+x-8$ and $L=Id$. Let
\[
\Phi=\Psi=\left\{ \psi\left(x\right)=-ax^{2}+bx+c,a\geq0,b,c\in\mathbb{R}\right\} ,
\]
is our sets of elementary functions. We want to find $\text{supp }g$, i.e. the set of all $\psi\in \Psi$ such that 
\[
\psi\left(x\right)\leq g\left(x\right)\quad\forall x\in\mathbb{R}.
\]
This gives us $\psi\in \Psi$ where $a>2,c\leq-\frac{\left(1-b\right)^{2}}{4\left(a-2\right)}-8$
or $\psi\left(x\right)=-2x^{2}+x-8$.
Consider $a>2$, for every $\alpha \in \mathbb{R}$ such that
\[
\alpha\leq \inf_{x}f\left(x\right)+\psi\left(x\right)=\begin{cases}
c-10 & \text{if }a=3,b=3,c\leq-9\\
-\frac{\left(b-3\right)^{2}}{4\left(3-a\right)}-10+c & \text{if }2<a<3,c\leq-\frac{\left(1-b\right)^{2}}{4\left(a-2\right)}-8 \\
-\infty & \text{if } a>3.
\end{cases}
\]
We need to find $x_{0}$ such that $f\left(x_{0}\right)\leq \alpha\leq\psi\left(x_{0}\right)$,
we can let $\alpha=\inf_{x}f\left(x\right)+\psi\left(x\right)$. There are two cases
\begin{itemize}
\item $\alpha=c-10$ or $\alpha\leq-19$, we cannot find $x_{0}$
such that $f\left(x_{0}\right)\leq -19$ as $f_{\min}=-\frac{43}{4}$.

\item $\alpha=-\frac{\left(b-3\right)^{2}}{4\left(3-a\right)}-10+c,$
we have to solve $f(x_0) \leq \alpha \leq \psi (x_0)$ for $x_0$. We arrive at the following system 
\begin{align*}
-ax^{2}+bx+c & \geq-\frac{\left(b-3\right)^{2}}{4\left(3-a\right)}-10+c\\
3x^{2}-3x-10 & \leq -\frac{\left(b-3\right)^{2}}{4\left(3-a\right)}-10+c\\
2<a<3, & \ c\leq-\frac{\left(1-b\right)^{2}}{4\left(a-2\right)}-8.
\end{align*}
By direct calculations, there are no solution $a,b,c,x$ for the above system of inequalities.

For $\psi\left(x\right)=-2x^{2}+x-8$, and this turns into
\[
\inf_{x\in\mathbb{R}}f\left(x\right)+g\left(x\right) =\alpha =\inf_{x\in\mathbb{R}}f\left(x\right)+\psi\left(x\right)\leq \sup_{\psi\in\psi}\inf_{x\in\mathbb{R}}f\left(x\right)+\psi\left(x\right),
\]
which is zero duality gap.
\end{itemize}
\end{example} 

\begin{example}
Now consider the functions $f\left(x\right)=x^{2}-3x-10,g\left(x\right)=2x+1$ with $L=Id$,
and the set of elementary functions is
\[
\Phi=\Psi=\left\{ \psi\left(x\right)=ax+b,\text{ where }a,b\in\mathbb{R}\right\} .
\]
Calculating $\psi\in\text{supp }g$ gives us $a=2$ and $b\leq1$. Now we find
\[
\alpha = \inf_{x\in\mathbb{R}}f\left(x\right)+\psi\left(x\right)=\inf_{x\in\mathbb{R}}x^{2}-x-10+b=-\frac{21}{2}+b.
\]
We also calculate 
\[
g^{*}_\Psi\left(\psi\right)=\sup_{x\in X}\psi\left(x\right)-g\left(x\right)=b-1,
\]
for $b\leq1$. Observe that 
\begin{align*}
\sup_{\psi\in\text{ supp g}}\inf_{x\in X}f\left(x\right)+\psi\left(x\right)-g^{*}_\Psi\left(\psi\right) & =\sup_{\psi\in\text{ supp g} }\inf_{x\in X}x^{2}-x-10+b-b+1\\
 & =\sup_{\psi\in\text{ supp g} }\inf_{x\in X}x^{2}-x-9\\
 & =\inf_{x\in X}x^{2}-x-9=\inf_{x\in\mathbb{R}}f\left(x\right)+g\left(x\right),
\end{align*}
while 
\[
\sup_{\psi\in\text{ supp g}}\inf_{x\in X}f\left(x\right)+\psi\left(x\right)-g^{*}_\Psi\left(\psi\right) \leq \sup_{\psi\in\Psi }\inf_{x\in X}f\left(x\right)+\psi\left(x\right)-g^{*}_\Psi\left(\psi\right).
\]
Thus we have zero duality gap.
\end{example}

\section*{Acknowledgments}
The authors are thankful to the anonymous referees for their constructive comments and remarks which improve the quality of the paper.

This work has been supported by the ITN-ETN project TraDE-OPT funded by the European Union{'}s Horizon 2020 research and innovation programme under the Marie Sk{\l}odowska-Curie grant agreement No.861137
\bibliographystyle{tfnlm}
\bibliography{ref_library.bib}
\end{document}